\documentclass[reqno]{amsart}
\usepackage{amsmath, amsthm, amssymb, amstext}

\usepackage[left=3cm,right=3cm,top=3cm,bottom=3cm]{geometry}
\usepackage{hyperref,xcolor}
\hypersetup{pdfborder={0 0 0},colorlinks}
\usepackage{enumitem}
\setlength{\parindent}{1.2em}
\allowdisplaybreaks
\raggedbottom
\usepackage{todonotes}

%% Theorem environments
\newtheorem{theorem}{Theorem}
\newtheorem{remark}[theorem]{Remark}
\newtheorem{lemma}[theorem]{Lemma}
\newtheorem{proposition}[theorem]{Proposition}

\newtheorem{definition}[theorem]{Definition}
\newtheorem{example}[theorem]{Example}
%%% End of theorem environments

% MATH OPERATORS
\DeclareMathOperator*{\esssup}{ess\,sup}
\DeclareMathOperator*{\essinf}{ess\,inf}

%%%%%%%%%

%DFF
% Customiser la table des matières

% Braces and similar things:
\renewcommand{\L}{\left}
\renewcommand{\r}{\right}
\newcommand{\abs}[1]{\left\lvert#1\right\rvert}
\newcommand{\norm}[1]{|\!|#1|\!|}

\newcommand{\curly}[1]{\left\{#1\right\}}

\newcommand{\round}[1]{\left(#1\right)}

\newcommand{\scal}[1]{\left\langle#1\right\rangle}

% SETS-SPACES
\renewcommand{\l }{\lambda }
\renewcommand{\O }{\Omega }
\newcommand{\h}{\mathcal{H}}
\newcommand{\R}{{\mathbb R}}
\newcommand{\N}{{\mathbb N}}
\newcommand{\RN}{\mathbb{R}^d}

% Redéfinition de \W

\newcommand{\Wr}{W^{1, \mathcal{H}}_{\textrm{\operatorname{rad}}}(\RN)}

\def\ds{\displaystyle}
\def\w{W^{1,\mathcal{H}}_0(\Omega)} \def\W{W^{1,\mathcal{H}}(\RN)}
\def\ww{W^{1,\mathcal{H}}(\Omega)}
\def\lh{L^{\mathcal{H}}(\RN)}
\def\WV{W^{1,\mathcal{H}}_V(\R^d)}

\def\Wr{W^{1,\mathcal{H}}_{\operatorname{rad}}(\R^d)}
\def\vi{\rho_{\mathcal{H}}}
\def\Lh{L^{\mathcal{H}}(\RN)}

\numberwithin{theorem}{section}
\numberwithin{equation}{section}

\title[Double phase problems with variable exponents]{Double phase problems with variable exponents depending on the solution and the gradient in the whole space $\mathbb{R}^N$}

\author[A.E. Bahrouni]{Ala Eddine Bahrouni}
\address[A.E. Bahrouni]{Mathematics Department, Faculty of Sciences, University of Monastir, 5019 Monastir, Tunisia}
\email{ala.bahrouni@fsm.rnu.tn}

\author[A. Bahrouni]{Anouar Bahrouni }
\address[A. Bahrouni]{Mathematics Department, Faculty of Sciences, University of Monastir, 5019 Monastir, Tunisia}
\email{bahrounianouar@yahoo.fr}

\author[P. Winkert]{Patrick Winkert}
\address[P. Winkert]{Technische Universit\"{a}t Berlin, Institut f\"{u}r Mathematik, Stra\ss e des 17.\,Juni 136, 10623 Berlin, Germany}
\email{winkert@math.tu-berlin.de}

\subjclass[2020]{35J20, 35J60, 35J70, 47J10, 46E35.}
\keywords{Continuous and compact embeddings, double phase operator, multiple solutions, variable exponents, variational methods}

\begin{document}

\begin{abstract}
	In this paper, we establish continuous and compact embeddings for a new class of Musielak-Orlicz Sobolev spaces in unbounded domains driven by a double phase operator with variable exponents that depend on the unknown solution and its gradient. Using these embeddings and an abstract critical point theorem, we prove the existence and multiplicity of weak solutions for such problems associated with this new operator in the whole space $\mathbb{R}^d$. This work can be seen as a continuation of the recent paper by Bahrouni--Bahrouni--Missaoui--R\u{a}dulescu \cite{Bahrouni-Bahrouni-Missaoui-Radulescu-2024}.
\end{abstract}

\maketitle

%********************************************************************
\section{Introduction}%\label{Intro}
%********************************************************************

Given $d \geq 3$, this paper is concerned with the following problem
\begin{equation}\label{prb}
	\begin{aligned}
		-\operatorname{div}&\left(|\nabla u|^{p(x,|\nabla u|)-2} \nabla u+ \mu(x)| \nabla u|^{q(x,|\nabla u|)-2}  \nabla u\right)\\&+ V(x)\left(|u|^{p(x,| u|)-2}  u+ \mu(x)|  u|^{q(x,| u|)-2}  u\right)=\l f(x,u), \quad x \in \mathbb{R}^d,
	\end{aligned}
\end{equation}
where $p, q, f\colon \mathbb{R}^d \times \mathbb{R} \rightarrow \mathbb{R}$ are three Carathéodory functions satisfying certain assumptions, $V\colon \mathbb{R}^d \rightarrow \mathbb{R}$ is a measurable function, $0 \leq \mu(\cdot) \in C^{0,1}(\mathbb{R}^N)$, and $\lambda$ is a positive constant.

Problems with functionals whose growth conditions depend on the solution or its gradient have been studied very effectively in various applications, including digital image denoising, non-Newtonian fluid flow through porous media, phase transitions, and fluid dynamics, among others, see the works by Blomgren--Chan--Mulet--Wong \cite{Blomgren-Chan-Mulet-Wong-1997}, Blomgren--Chan--Mulet--Vese--Wan \cite{Blomgren-Chan-Mulet-Vese-Wan-2000}, Bollt--Chartrand--Esedo\={g}lu--Schultz--Vixie \cite{Bollt-Chartrand-Esedoglu-Schultz-Vixie-2009} and Bahrouni--Bahrouni--Missaoui--R\u{a}dulescu \cite{Bahrouni-Bahrouni-Missaoui-Radulescu-2024} for an overview. These problems generally focus on minimizing functionals where the growth behavior is closely related to the size of the gradient or the solution itself. This paper can be seen as a continuation of the work by Bahrouni--Bahrouni--Hlel Missaoui--R\u{a}dulescu \cite{Bahrouni-Bahrouni-Missaoui-Radulescu-2024}, extending the analysis made in their work to the entire space $\RN$. In particular,  we establish fundamental results concerning the continuity and compactness of embeddings for the associated function spaces related to our operator. Specifically, we prove the important continuous and compact embeddings, as outlined in Theorems \ref{Inj1}, \ref{Inj2}, \ref{Inj3}, \ref{Inj30} and \ref{thms}.

The study of nonlinear problems in unbounded domains in $\mathbb{R}^d$, such as our problem \eqref{prb}, presents significant challenges, primarily due to the lack of compactness in the embedding of the solution space into appropriate Lebesgue-type spaces. This issue creates substantial difficulties in establishing existence and multiplicity results for possible solutions. Moreover, the specific dependence of the exponents on the unknown solution and its gradient further complicates the analysis. These challenges require the development of a refined and innovative technical framework to support our main findings.

Problem \eqref{prb} is driven by the following new double phase type operator
\begin{align}\label{oper}
	u\longmapsto -\operatorname{div}\L(a(x,|\nabla u|)\nabla u \r) + V(x)a(x,|u|)u,
\end{align}
where  $a\colon  \mathbb{R}^d \times \mathbb{R} \to \mathbb{R}$ is defined as
\begin{align}\label{A}
	a(x,\xi) =|\xi|^{p(x, |\xi|)-2} + \mu(x) |\xi|^{q(x, |\xi|)-2}.
\end{align}
The main feature of the operator given in \eqref{oper} is its ability to switch seamlessly between the regions $\{x \in \mathbb{R}^d \colon \mu(x) = 0\}$ and $\{x \in \mathbb{R}^d \colon \mu(x) > 0\}$. This continuous switching behavior is what gives it the name ``double phase'' operator.

In the last decade, the study of double phase problems has attracted great interest due to their ability to model complex phenomena and materials with heterogeneous properties. Double phase operators, such as
\begin{align}\label{eqc}
	-\operatorname{div} \left( |\nabla u|^{p-2} \nabla u + \mu(x) |\nabla u|^{q-2} \nabla u \right)
\end{align}
(considering the case when $p(x,t)=p$ and $q(x,t)=q$ in our operator), with the related energy functional
\begin{align}\label{dp.f}
	I(u)=\int_\Omega \left(\frac{|\nabla u|^p}{p} + \mu(x)\frac{|\nabla u|^q}{q}\right)  \,\mathrm{d} x,
\end{align}
with $\Omega$ being a domain in $\mathbb{R}^d$, were first introduced by Zhikov \cite{Zhikov-1995,Zhikov-1997} (see also Zhikov--Kozlov--Ole\u{\i}nik \cite{Zhikov-Kozlov-Oleinik-1994}) in connection with the investigation of materials with strong anisotropy. The differential operator and the energy functional given in \eqref{eqc} and \eqref{dp.f} appear in various physical applications, such as transonic flows (see Bahrouni--R\u{a}dulescu--Repov\v{s} \cite{Bahrouni-Radulescu-Repovs-2019}, quantum physics (see Benci--D'Avenia--Fortunato--Pisani \cite{Benci-DAvenia-Fortunato-Pisani-2000}), reaction-diffusion systems (see Cherfils--Il'yasov \cite{Cherfils-Ilyasov-2005}), and non-Newtonian fluids (see Liu--Dai \cite{Liu-Dai-2018-2}).  There is now a considerable amount of literature and a growing interest in double phase equations governed by the operator  \eqref{eqc}. While it is impossible to provide a comprehensive list, some pioneering works on the existence, nonexistence, and uniqueness of solutions include the works by Ambrosio--Essebei \cite{Ambrosio-Essebei-2023}, Arora--Fiscella--Mukherjee--Winkert \cite{Arora-Fiscella-Mukherjee-Winkert-2023}, Biagi--Esposito--Vecchi \cite{Biagi-Esposito-Vecchi-2021}, Colasuonno--Squassina \cite{Colasuonno-Squassina-2016}, Gasi\'nski--Papageorgiou \cite{Gasinski-Papageorgiou-2019}, Gasi\'nski--Winkert \cite{Gasinski-Winkert-2020}, Ge--Pucci \cite{Ge-Pucci-2022}, Liu--Dai \cite{Liu-Dai-2018-1,Liu-Dai-2020, Liu-Dai-2018-2},  Liu--Winkert \cite{Liu-Winkert-2022}, Mingione--R\u{a}dulescu \cite{Mingione-Radulescu-2021}, Papageorgiou--Pudelko--R\u{a}dulescu \cite{Papageorgiou-Pudelko-Radulescu-2023}, see also the references therein. In addition, further studies are carried out that deal with regularity results for minimizers of the functional \eqref{dp.f}. Such results can be found in the papers by Baroni--Colombo--Mingione \cite{Baroni-Colombo-Mingione-2015}, Colombo--Mingione \cite{Colombo-Mingione-2015}, De Filippis--Mingione \cite{DeFilippis-Mingione-2021-1, DeFilippis-Mingione-2021-2, DeFilippis-Mingione-2020-1, DeFilippis-Mingione-2020-2}, De Filippis--Oh \cite{DeFilippis-Oh-2019} and De Filippis--Palatucci \cite{DeFilippis-Palatucci-2019}.

The study of the double phase operator and the associated function space was recently advanced by Crespo–Blanco--Gasi\'nski--Harjulehto--Winkert \cite{Crespo-Blanco-Gasinski-Harjulehto-Winkert-2022}. Specifically, the authors investigate a quasilinear elliptic equation formulated by the following double phase operator with variable exponents:
\begin{align}\label{I4}
	A(u)=-\operatorname{div}\left(|\nabla u|^{p(x)-2} \nabla u+ \mu(x)|\nabla u|^{q(x)-2} \nabla u\right), \quad\text{for all } u \in \ww,
\end{align}
with $p,\ q \in C(\overline{\O})$ such that $1 < p(x) < d,\  p(x) < q(x)$ for all $x \in \O$ and $0 \leq \mu(\cdot) \in L^1(\O)$ and $\ww$ is the corresponding Musielak–Orlicz Sobolev space being a uniformly convex space. The paper by Crespo–Blanco--Gasi\'nski--Harjulehto--Winkert \cite{Crespo-Blanco-Gasinski-Harjulehto-Winkert-2022} was fundamental in this field and led to further studies on the existence, multiplicity and regularity of solutions to different types of problems driven by the operator $A$ using various methods and techniques. We emphasize the works by Arora--Dwivedi \cite{Arora-Dwivedi-2023} and Liu--Dai--Papageorgiou--Winkert \cite{Liu-Dai-Papageorgiou-Winkert-2022}, who used the fibering method and the Nehari manifold to establish the existence of at least two weak solutions for singular double phase problems. Ha--Ho \cite{Ha-Ho-2025} then introduced a Lions-type concentration-compactness principle for spaces associated with the double phase operator defined in \eqref{I4} to overcome the lack of compactness which is caused by the presence of a critical exponent in the nonlinear data and allows the existence and concentration of solutions to be investigated. Subsequently, Ho--Winkert \cite{Ho-Winkert-2023} applied an abstract critical point result due to Kajikiya \cite{Kajikiya-2005} along with recent a priori bounds to demonstrate a sequence of nontrivial solutions to Kirchhoff double phase problems with variable exponents. Kim--Kim--Oh--Zeng \cite{Kim-Kim-Oh-Zeng-2022} proved the existence and multiplicity of solutions to concave-convex-type double phase problems, showing that these solutions are bounded. Finally, Vetro--Winkert \cite{Vetro-Winkert-2023} used truncation arguments and comparison methods to prove the existence of at least two constant-sign solutions.

Recently, Bahrouni--Bahrouni--Missaoui--R\u{a}dulescu \cite{Bahrouni-Bahrouni-Missaoui-Radulescu-2024} investigated problems involving the operator $\operatorname{div} \L(a(x, |\nabla u|)\nabla u\r)$ in a bounded Lipschitz domain $\Omega$ with $a$ as given in \eqref{A}. The novelty of this work, in comparison to the double phase operator with variable exponents \eqref{I4}, lies in the fact that the exponents depend on the gradient of the solution. To the best of our knowledge, it is noteworthy that the paper in \cite{Bahrouni-Bahrouni-Missaoui-Radulescu-2024} was the first to investigate this problem in a bounded domain with variable exponents whereby this is a special case. Under certain assumptions on $p$, $q$, and $\mu$, the authors explored the properties of the Musielak-Orlicz spaces $L^{\h}(\O)$ and the Musielak-Orlicz Sobolev spaces $W^{1,\h}(\O)$, where $\h \colon \Omega \times [0,+\infty) \to \mathbb{R}$ is defined by
\begin{equation}\label{hdfn}
	\h (x,t) = \int_{0}^{t} h(x,s)\,\mathrm{d} s,
\end{equation}
and  $h\colon \mathbb{R}^d \times \mathbb{R} \to \mathbb{R}$ is defined by
\begin{align}\label{h}
	h(x,t) :=
	\begin{cases}
		a(x,|t|)t, & \text{for } t \neq 0, \\
		0, & \text{for } t=0.
	\end{cases}
\end{align}
with $a(\cdot,\cdot)$ as defined in \eqref{A}.
This generalized \textnormal{N}-function (see  Definition \ref{dfnfct}) is of particular importance due to the novel dependence of the variable exponents on the unknown solution and its gradient. This dependence creates new complexities and challenges in the analysis of such operators. Using the recent embedding results in Musielak-Orlicz Sobolev spaces proven by Cianchi--Diening \cite{Cianchi-Diening-2024}, Bahrouni--Bahrouni--Missaoui--R\u{a}dulescu \cite{Bahrouni-Bahrouni-Missaoui-Radulescu-2024} obtained several continuous and compact embedding results for the spaces $\ww$ and $\w$ into Musielak spaces, where $\O$ is bounded Lipschitz domain in $\RN$.

Building on the results of the paper \cite{Bahrouni-Bahrouni-Missaoui-Radulescu-2024}, we perform a deeper analysis of the operator \eqref{oper} in the whole space, using the \textnormal{N}-function $\h$ with variable exponents as a basis. The exponents $p$ and $q$ satisfy the following hypotheses:
\begin{enumerate}[label=\textnormal{(H)},ref=\textnormal{H}]
	\item\label{H}
		\begin{enumerate}
			\item[\textnormal{(i)}]
				Functions $p$ and $q$ are bounded, that is,
				\begin{align*}
					2&\leq p^-:= \essinf_{(x,t)\in \RN\times \R_+}p(x,t) \leq p^+:=\esssup_{(x,t)\in \RN\times \R_+}p(x,t)< d,\\
					2&\leq q^-:= \essinf_{(x,t)\in \RN\times \R_+} q(x,t)\leq q^+:=\esssup_{(x,t)\in \RN\times \R_+}q(x,t)<+\infty,
				\end{align*}
				and
				\begin{align*}
					p(x,t)< q(x,t)< p^-_\ast:=\frac{dp^-}{d-p^-} \quad\text{for a.a.\,}x\in \RN \text{ and for all }t\geq0.
				\end{align*}
			\item[\textnormal{(ii)}]
				The functions $p$ and $q$ are two Carath\'{e}odory functions. Furthermore, they exhibit a constant behavior equal $p(x)=p(x,1)$ and $q(x)=q(x,1)$, respectively, for all $t \in [0,1]$ and a nondecreasing behavior for $t \geq 1$.
			\item[\textnormal{(iii)}]
				There exist constant $c_p, c_q>0$ such that
				\begin{align*}
					|p(x,t)-p(y,t)| \leq c_p|x-y|
					\quad\text{and}\quad
					|q(x,t)-q(y,t)| \leq c_q|x-y|,
				\end{align*}
				for all $t \geq0$ and for a.a.\,$x,y \in \RN$.
			\item[\textnormal{(iv)}]
				$\dfrac{q^+}{p^-}<1+\dfrac{1}{d}$.
	\end{enumerate}
\end{enumerate}

The first objective of this paper is to extend the work of Bahrouni--Bahrouni--Missaoui--R\u{a}dulescu \cite{Bahrouni-Bahrouni-Missaoui-Radulescu-2024} by investigating the embedding properties of the associated function spaces in $\mathbb{R}^d$. First, we want to establish new continuous embedding results for Musielak-Orlicz Sobolev spaces $\W$ into $L^{\h_*}(\mathbb{R}^d)$, where $\h_\ast$ is the Sobolev conjugate of $\h$ (see Definition \ref{csh}). We must first give the following definition.

\begin{definition}
	We say that a generalized \textnormal{N}-function $\h$ satisfies the boundedness condition if  there exist $C_1,C_2>0$ such that
	\begin{equation}\label{bf}
		C_1\leq  \h(x,1)\leq C_2\quad\text{for all } x\in \Omega.\tag{$\mathcal{B}$}
	\end{equation}
\end{definition}

Our first result for continuous embeddings reads as follows.

\begin{theorem}[Continuous embedding]\label{Inj1}
	Let hypotheses \eqref{H} be satisfied. Then, the following hold:
	\begin{enumerate}
		\item[\textnormal{(i)}]
			the embedding $\W \hookrightarrow  L^{\h_\ast}(\RN)$ is continuous;
		\item[\textnormal{(ii)}]
			for any generalized \textnormal{N}-function $\mathcal{V}$  satisfying \eqref{bf},
			\begin{align}\label{cA}
				1< v^- \leq \frac{v(x,t)t}{\mathcal{V}(x,t)} \leq v^+ <+\infty\quad \text{for all } x \in \RN \text{ and for all } t\geq0,
			\end{align}
			with $ \mathcal{V}(x,t)=\int_{0}^{t} v(x,s)\,\mathrm{d}s,$
			\begin{equation}\label{2eq60}
				\mathcal{V} \ll \h_*,
			\end{equation}
			where $\ll$ is defined in Definition \ref{ddffd} and
			\begin{equation}\label{mla1b}
				\lim_{|t|\rightarrow 0} \frac{\mathcal{V}(x,t)}{\h (x,t)}=0 \quad\text{uniformly in } x \in  \RN,
			\end{equation}
	\end{enumerate}
	the embedding
	\begin{align*}
		\W \hookrightarrow  L^{\mathcal{V}}(\RN)
	\end{align*}
	is continuous.
\end{theorem}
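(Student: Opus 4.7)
For part (i), my plan is to invoke the Sobolev-type embedding result of Cianchi--Diening \cite{Cianchi-Diening-2024} for the generalized N-function $\h$ directly on $\RN$. Under hypothesis \eqref{H}, the function $\h$ from \eqref{hdfn}--\eqref{h} is a $\Delta_2$-class generalized N-function whose growth is trapped between the power functions associated with the exponents $p^-$ and $q^+$; the log-Hölder-type spatial regularity needed is supplied by \eqref{H}(iii), and since $p^+ < d$ by \eqref{H}(i), the Sobolev conjugate $\h_\ast$ of Definition \ref{csh} is well defined. Applying \cite{Cianchi-Diening-2024} then produces a constant $C>0$ with
\[
    \|u\|_{L^{\h_\ast}(\RN)} \leq C\bigl(\|u\|_{L^{\h}(\RN)}+\|\nabla u\|_{L^{\h}(\RN)}\bigr)
\]
for every $u\in \W$, which is exactly the announced continuous embedding.

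For part (ii), my plan is to bound the $\mathcal{V}$-modular of $u$ by combining the two one-sided dominations on $\mathcal{V}$ with a partition of $\RN$ by the pointwise size of $u$. By homogeneity we may assume $\|u\|_{W^{1,\h}(\RN)}\leq 1$, so that $\int_{\RN}\h(x,|u(x)|)\,\mathrm{d}x\leq 1$. Fix $\eta>0$. The uniform limit \eqref{mla1b} provides $\delta\in(0,1]$ with $\mathcal{V}(x,t)\leq \eta\,\h(x,t)$ whenever $|t|\leq \delta$, while $\mathcal{V}\ll\h_\ast$ (Definition \ref{ddffd}) yields $T\geq 1$ with $\mathcal{V}(x,t)\leq \eta\,\h_\ast(x,t)$ whenever $|t|\geq T$, in both cases for a.a.\ $x\in\RN$. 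Decompose $\RN=A_1\cup A_2\cup A_3$ with $A_1=\{|u|\leq \delta\}$, $A_2=\{\delta<|u|\leq T\}$, $A_3=\{|u|>T\}$. On $A_1$ the first comparison plus the bound on the $\h$-modular gives $\int_{A_1}\mathcal{V}(x,|u|)\,\mathrm{d}x\leq \eta$; on $A_3$ the second comparison combined with part (i) yields $\int_{A_3}\mathcal{V}(x,|u|)\,\mathrm{d}x\leq \eta C'$; on $A_2$ the growth \eqref{cA} together with \eqref{bf} controls $\mathcal{V}(x,|u|)$ by a pointwise constant $K=K(T,v^+,C_2)$, while assumption \eqref{H}(ii) (which freezes $p$ and $q$ to $p(x),q(x)$ on $[0,1]$) produces a uniform pointwise lower bound $\h(x,|u|)\geq c(\delta)>0$ on $A_2$, giving $|A_2|\leq c(\delta)^{-1}$ and hence a finite bound on $\int_{A_2}\mathcal{V}(x,|u|)\,\mathrm{d}x$. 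Summing the three contributions controls the $\mathcal{V}$-modular of $u$ purely in terms of $\|u\|_{W^{1,\h}(\RN)}$, and the standard modular-to-Luxemburg-norm passage then yields the continuous embedding $\W\hookrightarrow L^{\mathcal{V}}(\RN)$.

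The principal obstacle lies in step (i): one must carefully check that the structural hypotheses of the Cianchi--Diening embedding theorem are satisfied by $\h$ on the \emph{unbounded} domain $\RN$. Because the variable exponents in $\h$ depend simultaneously on the space variable $x$ \emph{and} on the size variable $t$, verifying the required log-Hölder-type regularity and the $\Delta_2$ property demands a combined use of \eqref{H}(ii) and \eqref{H}(iii), so that the extra $t$-dependence can be absorbed uniformly and the embedding constant remains independent of any truncation or exhaustion by balls. A secondary, but still delicate, point in part (ii) is controlling the measure of the middle set $A_2$: on the unbounded domain the uniform pointwise lower bound on $\h(x,|u|)$ for $|u|\in(\delta,T]$ is essential, and here assumption \eqref{H}(ii) together with the structural lower bound $\h(x,1)\geq 1/p^+$ must be exploited.
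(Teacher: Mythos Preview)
Your proposal is correct and follows essentially the same route as the paper. Two small points of difference worth noting:

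For part (i), the paper does not apply Cianchi--Diening ``directly'' to $\W$: the relevant embedding in \cite{Cianchi-Diening-2024} is stated for the homogeneous space $V^{1,\h}_{\textrm{d}}(\RN)=\{u\in V^{1,\h}(\RN):|\{|u|>t\}|<\infty\ \text{for all }t>0\}$, so an intermediate lemma establishing $\W\hookrightarrow V^{1,\h}_{\textrm{d}}(\RN)$ is needed first (this is precisely the finite-measure-of-level-sets argument, using $u\in L^{\h}(\RN)$ together with the lower growth bound on $\h$). Your ``principal obstacle'' paragraph hints at this, but you should make the two-step structure $\W\hookrightarrow V^{1,\h}_{\textrm{d}}(\RN)\hookrightarrow L^{\h_\ast}(\RN)$ explicit. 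Also, the structural hypotheses to be checked are the conditions \textnormal{(A$_0$)}, \textnormal{(A$_1$)}, \textnormal{(A$_2$)} on $\h^{-1}$ (this is the content of the paper's Proposition \ref{Prop1}), rather than a log-H\"older condition on the exponents themselves; the Lipschitz bound \eqref{H}(iii) and the constraint \eqref{H}(iv) feed into \textnormal{(A$_1$)}, while \textnormal{(A$_2$)} uses the freezing of exponents on $[0,1]$ from \eqref{H}(ii).

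For part (ii) your three-region decomposition and the treatment of each region match the paper's argument exactly, including the measure bound on $A_2$ via the uniform lower bound $\h(x,|u|)\geq \h(x,\delta)\geq C_1\min\{\delta^{p^-},\delta^{q^+}\}$.
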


The above theorem is new to the literature, and we have drawn inspiration from the recent work of Cianchi--Diening \cite{Cianchi-Diening-2024} to get our continuous embedding results for Musielak-Orlicz Sobolev spaces in $\mathbb{R}^d$. It is important to note that in our case the explicit form of $\h$ is unknown, which poses an additional challenge when setting up theorem \ref{Inj1}.

The main difficulty in the study of problem \eqref{prb} on the whole space $\RN$ is the lack of compactness. To overcome this, one can use the method of unbounded potential, i.e.\,the problem contains a term with unbounded potential. Next, we introduce the hypotheses on the potential function $V$:

\begin{enumerate}[label=\textnormal{$(V_0)$},ref=\textnormal{$V_0$}]
	\item\label{V0}
		\begin{enumerate}
			\item[\textnormal{(i)}]
				$V\in C(\RN,\R)$  and  there exists $V_0 > 0$ such that $V (x) \geq V_0$ for any $x \in  \RN$;
			\item[\textnormal{(ii)}]
				the set $\curly{x \in \RN \colon V (x) < L}$ has finite Lebesgue measure for each $L > 0$.
		\end{enumerate}
\end{enumerate}

\begin{enumerate}[label=\textnormal{$(V_1)$},ref=\textnormal{$V_1$}]
	\item\label{V1}
		\begin{enumerate}
			\item[\textnormal{(i)}]
				the same as $\eqref{V0}(\mathrm{i})$;
			\item[\textnormal{(ii)}]
				$\ds\lim_{|x| \rightarrow+ \infty} \ds \int_{B_1(x)} \frac{1}{V(y)} \,\mathrm{d}y =0$ with $B_1(x)=\curly{y\in\RN\colon  |y-x|<1}$.
		\end{enumerate}
\end{enumerate}

Next, we define the following subspace of $\W$:
\begin{align*}
	\WV:= \curly{ u \in \W\colon \int_{\RN}^{} V(x) \h(x,u)\,\mathrm{d}x <\infty},
\end{align*}
endowed with the norm
\begin{align*}
	\norm{u}_{\WV} := \norm{\nabla u}_{\lh}+ \norm{u}_{L^{\h}_{V}(\RN)},
\end{align*}
with $\norm{u}_{\W}$ as in \eqref{nhr} and
\begin{align*}
	\norm{u}_{L^{\h}_{V}(\RN)}:= \inf \curly{ \l >0\colon \ \int_{\RN}^{} V(x) \h\round{x, \frac{u(x)}{\l}} \leq 1 }.
\end{align*}

Our main results about compact embeddings are given in the next theorems.

\begin{theorem}[Compact embedding]\label{Inj2}
	Let hypotheses \eqref{H}  and \eqref{V0} be satisfied. Then, the embedding $ \WV \hookrightarrow L^{\h}(\RN) $ is compact.
\end{theorem}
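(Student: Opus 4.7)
The plan is to adapt the classical splitting strategy used for Schrödinger-type problems with coercive potentials (e.g., Bartsch--Wang) to the Musielak-Orlicz setting with variable exponents depending on the solution. Take a bounded sequence $(u_n) \subset \WV$. Since $V \geq V_0 > 0$ implies $\|u\|_{L^{\h}(\RN)} \leq C\|u\|_{L^{\h}_V(\RN)}$, the sequence is bounded in $\W$, so by reflexivity I extract a subsequence (not relabeled) with $u_n \rightharpoonup u$ in $\WV$. Replacing $u_n$ by $u_n - u$, it suffices to show $u_n \to 0$ in $L^{\h}(\RN)$. By the $\Delta_2$ property satisfied by $\h$ under \eqref{H}, this reduces to proving that the modular $\int_{\RN} \h(x,u_n)\,\mathrm{d}x$ tends to zero.

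Fix $\varepsilon>0$. The core idea is to decompose $\RN = B_R \cup (\{V<L\} \setminus B_R) \cup (\{V \geq L\} \setminus B_R)$ for parameters $R,L$ to be chosen. On the unbounded ``deep well'' piece $\{V \geq L\} \setminus B_R$, I use the trivial estimate
\begin{equation*}
	\int_{\{V\geq L\}\setminus B_R} \h(x,u_n)\,\mathrm{d}x \leq \frac{1}{L}\int_{\RN} V(x)\h(x,u_n)\,\mathrm{d}x \leq \frac{C}{L},
\end{equation*}
which becomes smaller than $\varepsilon/3$ by choosing $L$ large, using that the $\WV$-norm controls the weighted modular. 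On the ball $B_R$, I invoke the compact embedding $W^{1,\h}(B_R) \hookrightarrow\hookrightarrow L^{\h}(B_R)$ proved in the bounded-domain case by Bahrouni--Bahrouni--Missaoui--R\u{a}dulescu \cite{Bahrouni-Bahrouni-Missaoui-Radulescu-2024}; this gives $u_n \to 0$ in $L^{\h}(B_R)$, hence the modular on $B_R$ is below $\varepsilon/3$ for $n$ large.

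The delicate piece is the intermediate region $A_R := (\{V<L\}\setminus B_R)$. By hypothesis \eqref{V0}(ii) the set $\{V<L\}$ has finite Lebesgue measure, so $|A_R| \to 0$ as $R \to \infty$. To exploit this I need equi-integrability of $\{\h(\cdot,u_n)\}$, which I obtain from the continuous embedding $\W \hookrightarrow L^{\h_*}(\RN)$ of Theorem~\ref{Inj1}(i): the sequence is bounded in $L^{\h_*}(\RN)$, hence has bounded $\h_*$-modular. The relation $\h \ll \h_*$ then yields, for any $\sigma>0$, an $M=M(\sigma)$ with $\h(x,t)\leq \sigma\,\h_*(x,t)$ whenever $t\geq M$, uniformly in $x$, while for $t\leq M$ the quantity $\h(x,t)$ is uniformly bounded by $\h(x,M) \leq C(M)$ thanks to the boundedness of $p,q,\mu$ and the explicit form \eqref{hdfn}--\eqref{h}. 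Splitting accordingly gives
\begin{equation*}
	\int_{A_R}\h(x,u_n)\,\mathrm{d}x \leq \sigma \int_{\RN}\h_*(x,u_n)\,\mathrm{d}x + C(M)\,|A_R|,
\end{equation*}
and choosing first $\sigma$ small, then $R$ large, keeps this piece below $\varepsilon/3$ uniformly in $n$.

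Combining the three estimates forces $\int_{\RN}\h(x,u_n)\,\mathrm{d}x < \varepsilon$ for all sufficiently large $n$, which gives modular convergence and thus norm convergence in $L^{\h}(\RN)$. The main obstacle I anticipate is the equi-integrability step on $A_R$: the dependence of $\h$ on the unknown through the exponents $p(x,|u|)$ and $q(x,|u|)$ means the standard $N$-function comparison is not directly applicable, and one has to exploit the monotone/constant-in-$t$ behavior from \eqref{H}(ii) together with the boundedness condition \eqref{bf} invoked in Theorem~\ref{Inj1}(ii) to justify the uniform bound $\h(x,M)\leq C(M)$.
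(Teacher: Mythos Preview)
Your proposal is correct and shares the paper's overall architecture: the same three-piece splitting of $\mathbb{R}^d$ into a large ball, the set $\{V\geq L\}$ outside the ball, and the finite-measure set $\{V<L\}$ outside the ball; the same use of the local compact embedding from \cite{Bahrouni-Bahrouni-Missaoui-Radulescu-2024} on the ball; and the same trivial estimate $\h\leq V\h/L$ on the deep-well region.

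The differences are in the bookkeeping and in the treatment of the intermediate set $A_R=\{V<L\}\setminus B_R$. The paper does not reduce to $u=0$ but instead shows modular convergence $\int\h(x,u_n)\,\mathrm{d}x\to\int\h(x,u)\,\mathrm{d}x$ and then invokes the Br\'ezis--Lieb lemma (Proposition~\ref{b-l}); your reduction to the weakly null case is cleaner and avoids this. On $A_R$, the paper introduces the auxiliary \textnormal{N}-function $\mathcal{R}$ of Lemma~\ref{Aux} with $\mathcal{R}\circ\h\ll\h_*$, applies the Musielak--Orlicz H\"older inequality to $\int_{A_R}\h(x,u_n)\,\mathrm{d}x$ against the pair $(\mathcal{R},\widetilde{\mathcal{R}})$, and controls $\|\chi_{A_R}\|_{L^{\widetilde{\mathcal{R}}}}$ via Lemma~\ref{Aux1}. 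Your route is more elementary: you bypass $\mathcal{R}$ and H\"older entirely, using only $\h\ll\h_*$ together with the uniform bound $\h(x,M)\leq C(M)$, which follows directly from Proposition~\ref{zoo}(1) and the pointwise bound $\h(x,1)\leq\frac{1}{p^-}(1+\|\mu\|_\infty)$. Note that the relation $\h\ll\h_*$ is not stated explicitly in the paper but is an immediate consequence of Lemma~\ref{Aux}, since $\mathcal{R}(x,s)\geq s$ for $s$ large uniformly in $x$; you should cite this or give the one-line growth comparison using Lemma~\ref{lemR2} and $q^+<p_*^-$. The anticipated obstacle you flag (justifying $\h(x,M)\leq C(M)$) is in fact harmless and does not require the monotonicity in \eqref{H}(ii); the bound comes straight from the explicit form of $h$ and the boundedness of $p,q,\mu$.
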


As a consequence of the last theorem, we can give the following result.

\begin{theorem}[Compact embedding]\label{Inj3}
	Assume that conditions \eqref{H} and \eqref{V0} hold. Let $\mathcal{V}$ be a generalized \textnormal{N}-function that satisfies \eqref{bf}, \eqref{cA}, \eqref{2eq60}, and at least one of the following conditions:
	\begin{enumerate}
		\item[\textnormal{(1)}]
			It holds
			\begin{align}\label{mla1}
				\limsup_{\vert t \vert \rightarrow 0}\frac{\mathcal{V}(x,\vert t \vert)}{\h(x,\vert t \vert)} < +\infty \quad\text{uniformly in } x \in \mathbb{R}^d. \tag{$B_1$}.
			\end{align}
		\item[\textnormal{(2)}]
			There exists $a \in (0,1)$ such that
			\begin{align}\label{mla2}
				\mathcal{V}(x,\vert t \vert) \leq  \h(x,|t|)^a \h_*(x,\vert t \vert)^{1-a}\quad \text{for all }  \vert t \vert \leq 1  \text{ and all }  x \in \mathbb{R}^d.
			\end{align}
	\end{enumerate}
	Then, the space $\WV$ is compactly embedded into $L^{\mathcal{V}}(\mathbb{R}^{d})$.
\end{theorem}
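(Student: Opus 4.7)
My plan is to combine the strong $L^{\h}$-convergence provided by Theorem \ref{Inj2} with the continuous embedding $\W\hookrightarrow L^{\h_\ast}(\RN)$ from Theorem \ref{Inj1}(i) via an interpolation/splitting argument. For a bounded sequence $(u_n) \subset \WV$ and its limit $u$, the size of $|u_n-u|$ will be split into three regimes: small values where the hypothesis near zero (either \eqref{mla1} or \eqref{mla2}) dominates, an intermediate bounded range handled by convergence in measure together with the boundedness condition \eqref{bf}, and large values controlled by $\mathcal{V}\ll\h_\ast$ from \eqref{2eq60}. Since \eqref{cA} implies a $\Delta_2$-type estimate for $\mathcal{V}$, modular convergence $\rho_{\mathcal{V}}(u_n-u)\to 0$ will suffice for the desired norm convergence in $L^{\mathcal{V}}(\RN)$.

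Let $(u_n)$ be a bounded sequence in $\WV$. By Theorem \ref{Inj2}, after extracting a subsequence I may assume $u_n\to u$ in $L^{\h}(\RN)$; a further extraction guarantees pointwise a.e. convergence and convergence in measure. By Theorem \ref{Inj1}(i), $(u_n-u)$ is bounded in $L^{\h_\ast}(\RN)$, and the $\Delta_2$-property of $\h_\ast$ (inherited from the upper exponent bound in \eqref{H}) yields a modular bound $\rho_{\h_\ast}(u_n-u)\leq C_0$ uniformly in $n$. Fix $\varepsilon>0$. By $\mathcal{V}\ll\h_\ast$, there exists $R_2>1$ such that $\mathcal{V}(x,t)\leq \varepsilon\,\h_\ast(x,t)$ for $t\geq R_2$ and a.e. $x$, so that
\begin{equation*}
\int_{\{|u_n-u|\geq R_2\}} \mathcal{V}(x,|u_n-u|)\,\mathrm{d}x \leq \varepsilon\,\rho_{\h_\ast}(u_n-u)\leq \varepsilon C_0.
\end{equation*}

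In Case (1), the hypothesis \eqref{mla1} provides $R_1\in(0,1)$ and $M>0$ with $\mathcal{V}(x,t)\leq M\,\h(x,t)$ for all $t\leq R_1$ and a.e. $x$, hence
\begin{equation*}
\int_{\{|u_n-u|\leq R_1\}} \mathcal{V}(x,|u_n-u|)\,\mathrm{d}x \leq M\,\rho_{\h}(u_n-u) \longrightarrow 0.
\end{equation*}
In Case (2), I split at $1$: applying \eqref{mla2} and Hölder's inequality with conjugate exponents $1/a$ and $1/(1-a)$,
\begin{equation*}
\int_{\{|u_n-u|\leq 1\}} \mathcal{V}(x,|u_n-u|)\,\mathrm{d}x \leq \rho_{\h}(u_n-u)^{a}\,\rho_{\h_\ast}(u_n-u)^{1-a} \leq C_0^{1-a}\,\rho_{\h}(u_n-u)^{a}\longrightarrow 0.
\end{equation*}
In both cases there remains an intermediate region $\{R_1<|u_n-u|<R_2\}$ (respectively $\{1<|u_n-u|<R_2\}$) on which \eqref{cA} together with \eqref{bf} yields a uniform pointwise bound $\mathcal{V}(x,t)\leq C(R_2)$, while the measure of the region tends to zero since $u_n\to u$ in measure; its contribution to $\rho_{\mathcal{V}}(u_n-u)$ therefore vanishes in the limit.

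Combining these pieces, $\limsup_{n\to\infty}\rho_{\mathcal{V}}(u_n-u)\leq \varepsilon C_0$, and since $\varepsilon>0$ is arbitrary, $\rho_{\mathcal{V}}(u_n-u)\to 0$; the $\Delta_2$-condition on $\mathcal{V}$ from \eqref{cA} then upgrades this to $\|u_n-u\|_{L^{\mathcal{V}}(\RN)}\to 0$, proving compactness. The main technical obstacle I anticipate lies in the intermediate regime: I must check that the $L^{\h}$-convergence (after a further subsequence extraction) delivers convergence in measure, and that \eqref{cA} together with \eqref{bf} genuinely yields the uniform pointwise bound on $\mathcal{V}(\cdot,t)$ over bounded intervals of $t$. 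This is precisely where the new boundedness condition \eqref{bf} plays a decisive role and distinguishes the unbounded-domain setting from the bounded one.
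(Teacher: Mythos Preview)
Your proposal is correct and follows essentially the same route as the paper: the three-regime splitting of $|u_n-u|$, the use of Theorem~\ref{Inj2} for strong $L^{\h}$-convergence, Theorem~\ref{Inj1}(i) for a uniform modular bound in $L^{\h_\ast}$, and condition~\eqref{bf} to control the intermediate range. The only cosmetic difference is that on the intermediate set the paper bounds the ratio $\mathcal{V}/\h$ uniformly via~\eqref{bf} and integrates against $\rho_{\h}(u_n-u)\to 0$, whereas you bound $\mathcal{V}$ pointwise by a constant and invoke convergence in measure; both variants rest on the same ingredients and are equally valid.
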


Consequently, we can establish the following result.

\begin{theorem}[Compact embedding]\label{Inj30}
	Assume that conditions \eqref{H} and \eqref{V1} hold.
	\begin{enumerate}
		\item[\textnormal{(i)}]
			The continuous embedding $\WV \hookrightarrow L^r(\RN)$ holds for $r \in [p^-, p^-_\ast]$, where $p^-_\ast = \frac{dp^-}{d - p^-}$.
		\item[\textnormal{(ii)}]
			The compact embedding $\WV \hookrightarrow L^r(\RN)$ holds for $r \in [p^-, p^-_\ast[$.
	\end{enumerate}
\end{theorem}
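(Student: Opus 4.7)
The plan is to derive Theorem \ref{Inj30} from Theorem \ref{Inj1} together with a local-compactness plus tail-decay argument tailored to hypothesis \eqref{V1}.

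For part (i), I would first record the obvious continuous inclusion $\WV \hookrightarrow \W$. Combining this with Theorem \ref{Inj1}(i) and the lower bound $\h_{\ast}(x,t)\geq c |t|^{p^-_\ast}$ for $|t|\geq 1$ (which follows from the construction of the Sobolev conjugate once $\h(x,t)\gtrsim|t|^{p^-}$ for $t\geq 1$) would yield the endpoint $\WV \hookrightarrow L^{p^-_\ast}(\RN)$, with the contribution on $\{|u|<1\}$ controlled by using that this set has finite measure under the $V$-weighted modular because $V\geq V_0>0$. For the other endpoint $r=p^-$, I would exploit the fact that $\int_{\RN} V(x)\,\h(x,u)\,dx$ is controlled by $\|u\|_{\WV}$, splitting into $\{|u|\geq 1\}$, where $\h(x,t)\geq c|t|^{p^-}$ gives direct control via $V\geq V_0$, and $\{|u|<1\}$, where the $V^{-1}$-integrability furnished by \eqref{V1}(ii) allows a covering/H\"older argument on unit balls. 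Intermediate exponents $r\in(p^-,p^-_\ast)$ then follow from $L^p$-interpolation; alternatively, for $r$ in a suitable subrange, Theorem \ref{Inj1}(ii) applied with $\mathcal{V}(x,t)=t^r$ would yield $\W\hookrightarrow L^r(\RN)$ directly upon verifying \eqref{cA}, \eqref{2eq60} and \eqref{mla1b}.

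For part (ii), I would take a bounded sequence $u_n\rightharpoonup u$ in $\WV$ and establish strong convergence in $L^r(\RN)$ for $r\in[p^-,p^-_\ast)$. On every ball $B_R$, the compact embedding $W^{1,\mathcal{H}}(B_R)\hookrightarrow\hookrightarrow L^r(B_R)$ from \cite{Bahrouni-Bahrouni-Missaoui-Radulescu-2024} gives $u_n\to u$ strongly in $L^r(B_R)$, so the entire problem reduces to a uniform tail estimate $\|u_n-u\|_{L^r(B_R^c)}\to 0$ as $R\to\infty$. Using that $(u_n-u)$ is bounded in $L^{p^-_\ast}(\RN)$ by part (i) and the H\"older interpolation $\|v\|_r\leq \|v\|_{p^-}^{\theta}\|v\|_{p^-_\ast}^{1-\theta}$ with $\theta>0$ (since $r<p^-_\ast$), it suffices to control $\|u_n-u\|_{L^{p^-}(B_R^c)}$ uniformly in $n$. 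I would then cover $B_R^c$ by unit balls $\{B_1(x_k)\}$ of uniformly finite multiplicity and estimate the local $L^{p^-}$-norm on each ball through a H\"older inequality with weight $V$; the decisive ingredient is that by \eqref{V1}(ii), $\int_{B_1(x_k)}V^{-1}(y)\,dy\to 0$ as $|x_k|\to\infty$, which, combined with the uniform modular bound $\int V(x)\h(x,u_n)\,dx\leq C$, forces the tail to vanish uniformly in $n$.

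The main obstacle will be the tail estimate just sketched. Because the variable exponents $p(\cdot,|u|)$ and $q(\cdot,|u|)$ depend on the solution itself, the $V$-weighted modular does not translate directly into a clean weighted $L^{p^-}$-control: on $\{|u|<1\}$ one only has $\h(x,t)\gtrsim|t|^{p^+}$ rather than $|t|^{p^-}$, while on $\{|u|\geq 1\}$ both power-law regimes contribute. One must therefore split according to the size of $|u|$, combine the modular control on the large-value part with the $V^{-1}$-integrability on the small-value part, and propagate the estimates uniformly through the covering. The arrangement of the H\"older exponents is delicate since \eqref{V1}(ii) only guarantees smallness of the average of $V^{-1}$ to the power $1$, so the whole scheme has to be set up precisely so that this power is the one that ends up appearing in the final H\"older step.
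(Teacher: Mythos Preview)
Your plan takes a genuinely different and much longer route than the paper. The paper's proof is two lines: it asserts the continuous inclusion $\WV\hookrightarrow W^{1,p^-}_V(\RN)$ into the classical weighted Sobolev space with constant exponent $p^-$, and then invokes Lemma~7 of Stegli\'nski~\cite{Steglinski-2022}, which under \eqref{V1} already furnishes the continuous embedding $W^{1,p^-}_V(\RN)\hookrightarrow L^r(\RN)$ for $r\in[p^-,p^-_\ast]$ and compactness for $r<p^-_\ast$. The covering, local-compactness and tail-decay machinery you sketch is precisely the content of that cited lemma in the constant-exponent setting, so by first passing to $W^{1,p^-}_V$ the paper avoids rebuilding any of it inside the Musielak--Orlicz framework. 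What your approach would buy is self-containment; what the paper's buys is brevity. Incidentally, your worry that \eqref{V1}(ii) only bounds $V^{-1}$ in $L^1$ on unit balls is overstated: since \eqref{V1}(i) gives $V^{-1}\le V_0^{-1}$ uniformly, interpolation with $L^\infty$ forces $\|V^{-1}\|_{L^s(B_1(x))}\to 0$ for every finite $s$, so the H\"older arrangement is not as delicate as you fear. The genuine variable-exponent issue you raise---that on $\{|u|<1\}$ one only has $\h(x,t)\gtrsim t^{p^+}$ rather than $t^{p^-}$---is common to both routes, since it sits inside the paper's asserted inclusion $\WV\hookrightarrow W^{1,p^-}_V$ just as in your tail estimate; neither treatment spells out how it is handled.
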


Let us discuss the significance of the assumptions \eqref{V0} and  \eqref{V1}. The hypotheses \eqref{V0} was introduced by Bartsch--Wang \cite{Bartsch-Wang-1995}, which ensures the compact embedding of the solution space into Lebesgue space while the assumptions in \eqref{V1} has been introduced by Benci--Fortunato \cite{Benci-Fortunato-1978} in which the authors studied the spectrum of the Schr\"{o}dinger operator with potential $V$. In 2002, Salvatore \cite[ Proposition 3.1 and Remark 3.2]{Salvatore-2003} make a comparison between the two hypotheses and conclude that the assumption \eqref{V1} is strictly weaker than \eqref{V0}. The work by Bartsch--Wang \cite{Bartsch-Wang-1995} has inspired numerous studies on nonlinear equations involving various operators and unbounded potentials. In this context, we cite some relevant works and their references that were fundamental to our study. Under assumption \eqref{V0}, Silva--Carvalho--de Albuquerque--Bahrouni \cite{Silva-Carvalho-deAlbuquerque-Bahrouni-2021} demonstrated continuous and compact embedding results for fractional Orlicz spaces. Similar results are presented by Bahrouni--Missaoui--Ounaies \cite{Bahrouni-Missaoui-Ounaies-2024} for fractional Musielak-Orlicz Sobolev spaces. Regarding assumption \eqref{V1}, Bartolo--Candela--Salvatore \cite{Bartolo-Candela-Salvatore-2016} examined continuous and compact embeddings of the weighted Sobolev space $W^{1,p}_V(\mathbb{R}^d)$ into Lebesgue spaces. Building on this, Stegli\'{n}ski \cite{Steglinski-2022}, established an embedding result for the weighted Musielak-Orlicz Sobolev spaces associated with the classical double phase \textnormal{N}-function (related to the operator \eqref{eqc}). Theorems \ref{Inj2}, \ref{Inj3} and \ref{Inj30} are a further development of the previous work by dealing with the new double phase problem \eqref{prb}, where the exponents depend on both the solution and its gradient.

In the next result, we denote by $W^{1,\h}_{\operatorname{rad}}(\mathbb{R}^d)$ the subspace of $\W$ consisting of all radial functions and we present a key result concerning the embedding theory within Musielak-Orlicz Sobolev spaces, with a particular focus on radially symmetric functions. This theorem offers significant insights into the behavior of these functions and their embeddings into other Musielak-Orlicz spaces.

\begin{theorem}[Strauss radial embedding]\label{thms}
	Let condition \eqref{H} be satisfied and $\mathcal{V}$ be a generalized \textnormal{N}-function satisfying \eqref{bf}, \eqref{cA}, \eqref{2eq60}, and \eqref{mla1b}. Then, the space $W^{1,\h}_{\operatorname{rad}}(\mathbb{R}^d)$ is compactly embedded into $L^{\mathcal{V}}(\mathbb{R}^d)$.
\end{theorem}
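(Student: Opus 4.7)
My plan is to adapt the classical Strauss radial compactness argument to the Musielak--Orlicz setting. The essential ingredient is a pointwise radial decay estimate: for every $u \in W^{1,\h}_{\operatorname{rad}}(\mathbb{R}^d)$ there exists $\gamma > 0$ (I expect $\gamma = (d-1)/p^-$) and a constant $C$ depending only on $\|u\|_{\W}$ such that, after modifying $u$ on a null set,
\begin{align*}
	|u(x)| \leq C\,|x|^{-\gamma} \quad \text{for a.a.\,}|x| \geq 1.
\end{align*}
I would obtain this from the fundamental theorem of calculus applied to the radial profile $w(r) = u(|x|)$, converting $r^{d-1} w(r)^{p^-}$ into an integral on $(r,\infty)$ and applying H\"older's inequality. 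The missing ingredient is a $W^{1,p^-}$-type control of $u$, which I would extract from the bound $t^{p^-} \lesssim \h(x,t) + 1$ for $t \geq 1$ (a consequence of hypothesis \eqref{H}) combined with the fact that the modular $\int_{\RN} \h(x,|\nabla u|)\,\mathrm{d}x$ is finite.

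Given this decay, compactness follows by a standard inner/outer splitting. Let $\{u_n\}$ be a bounded sequence in $W^{1,\h}_{\operatorname{rad}}(\mathbb{R}^d)$. By Theorem \ref{Inj1}(ii) it is bounded in $L^{\mathcal{V}}(\RN)$, and passing to a subsequence I may assume $u_n \rightharpoonup u$ weakly in $\W$ with $u$ radial. On any ball $B_R$, the bounded-domain analogue of Theorem \ref{Inj3} established in \cite{Bahrouni-Bahrouni-Missaoui-Radulescu-2024} under the very conditions \eqref{bf}, \eqref{cA}, \eqref{2eq60}, \eqref{mla1b} gives $u_n \to u$ strongly in $L^{\mathcal{V}}(B_R)$, so only the tail remains. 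The radial decay yields $\sup_n |u_n(x)| \leq C R^{-\gamma}$ on $\{|x| \geq R\}$, which becomes smaller than any prescribed $\delta > 0$ for $R$ large. Condition \eqref{mla1b} supplies, for each $\varepsilon > 0$, some $\delta > 0$ such that $\mathcal{V}(x,t) \leq \varepsilon\,\h(x,t)$ for $|t| \leq \delta$, uniformly in $x \in \RN$. Combined with $\sup_n \int_{\RN} \h(x,u_n)\,\mathrm{d}x < \infty$, this gives
\begin{align*}
	\int_{|x| \geq R} \mathcal{V}(x,u_n)\,\mathrm{d}x \leq \varepsilon \int_{\RN} \h(x,u_n)\,\mathrm{d}x \leq C\varepsilon
\end{align*}
uniformly in $n$, and an analogous bound holds for $u$. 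Together with the strong convergence on $B_R$, this produces $u_n \to u$ in $L^{\mathcal{V}}(\RN)$.

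The main obstacle I expect is the radial decay estimate in Step 1: because the exponents in $\h$ depend on $x$ and on the function value, neither $w(r)$ nor $w'(r)$ sits in a single Lebesgue space with a clean Strauss bound. To get around this I would first use the lower bounds on $p$ and $q$ in \eqref{H} together with the characterisation \eqref{hdfn}--\eqref{h} to control the radial profile by a weighted $L^{p^-}$ norm of $u$ and $\nabla u$ on an exterior region, and only then run the one-dimensional integration argument. The condition \eqref{2eq60} does not enter the tail estimate above but is essential for the local compactness cited in the splitting, so the full set of hypotheses on $\mathcal{V}$ is genuinely needed.
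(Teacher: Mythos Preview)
Your route via a pointwise radial decay estimate is genuinely different from the paper's, which first proves and then invokes the Lions-type lemma (Theorem~\ref{lions}) together with a disjoint-ball counting argument: for a bounded radial sequence one shows $\sup_{y\in\RN}\int_{B_r(y)}\h(x,u_n)\,\mathrm{d}x\to 0$ by observing that arbitrarily many disjoint balls of radius $r$ can be placed on a sphere of large radius, after which Theorem~\ref{lions} upgrades this integral vanishing to $L^{\mathcal{V}}$-convergence. That argument stays entirely at the modular level and never requires pointwise information on $u_n$ or any $L^{p^-}$ control.

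Step~1 of your plan has a genuine gap. The inequality $t^{p^-}\lesssim \h(x,t)+1$ valid for $t\geq 1$ cannot be promoted to $u,\nabla u\in L^{p^-}(\RN)$: the additive constant is not integrable on $\RN$, and for $0<t<1$ hypothesis \eqref{H}\textnormal{(ii)} gives $\h(x,t)= t^{p(x)}/p(x)+\mu(x)t^{q(x)}/q(x)$, so whenever $p(x)>p^-$ the ratio $t^{p^-}/\h(x,t)\to\infty$ as $t\downarrow 0$. Hence $W^{1,\h}(\RN)\not\hookrightarrow W^{1,p^-}(\RN)$ in general, and the one-dimensional integration you sketch yields no uniform bound $|u(x)|\leq C\,|x|^{-\gamma}$ with $C$ depending only on $\|u\|_{\W}$. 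The fallback of passing to a ``weighted $L^{p^-}$ norm on an exterior region'' meets the same obstruction, since it is precisely the small-value regime of $u$ and $\nabla u$ on an unbounded set that is uncontrolled by the $\h$-modular. This is exactly the difficulty the paper's modular-level approach is designed to avoid; if you wish to rescue the pointwise strategy you would need an independent radial decay lemma in the Musielak--Orlicz setting that tolerates the $x$-dependence of $\h$, and none is provided here.
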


It is generally recognized in the literature that Strauss' radial embedding results provide an alternative approach to solving problems with lack of compactness, especially when the potential $V$ is constant. The pioneering work in this field is attributed to Strauss \cite{Strauss-1977}, where the potential $V(\cdot)$ is constant. In this groundbreaking work, Strauss introduced the well-known compactness lemma for radially symmetric functions, which has since become a fundamental tool in analyzing such problems. The unknown specific form of the function $\h$ and the dependence of the exponents $p$ and $q$ both on the unknown solution and its gradient make it difficult to apply classical techniques to prove the Strauss embedding theorem. As a result, it is necessary to extend the well-known ``Lions Lemma'' to the new Musielak-Orlicz Sobolev spaces in order to establish the theorem \ref{thms}. Note that Liu--Dai \cite{Liu-Dai-2020} have established a version of the theorem \ref{thms} with a constant variable exponent by comparing $W^{1,\h}_{\operatorname{rad}}(\mathbb{R}^d)$ with $W^{1,p}_{\operatorname{rad}}(\mathbb{R}^d)$.

However, in Theorem \ref{thms} we prove a generalized result for compact embeddings for our radial function space, not in a Lebesgue space, but in a generalized Musielak-Orlicz space under certain assumptions, despite the challenges posed by the dependence of the exponents on the solution and its gradient.

\begin{theorem}[Lions-type lemma]\label{lions}
	Let \eqref{H} be satisfied and $\mathcal{V}$ be a generalized \textnormal{N}-function satisfying \eqref{bf}, \eqref{cA}, \eqref{2eq60}, and \eqref{mla1b}. Furthermore, let $\lbrace u_{n} \rbrace_{n \in \mathbb{N}}$ be a bounded sequence in $W^{1,\h}(\mathbb{R}^{d})$ such that $u_n \rightharpoonup 0$ in $\W$ and
	\begin{align}\label{lionss}
		\lim_{n \rightarrow +\infty} \left[ \sup_{y \in \mathbb{R}^{d}} \int_{B_{r}(y)} \h(x,u_{n})\,\mathrm{d}x \right] = 0\quad\text{for some } r > 0.
	\end{align}
	Then, $u_{n} \rightarrow 0$ in $L^{\mathcal{V}}(\mathbb{R}^{d})$.
\end{theorem}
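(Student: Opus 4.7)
The plan is to adapt the classical Lions concentration-compactness argument to the variable-exponent Musielak-Orlicz setting. I cover $\mathbb{R}^d$ by a countable family of balls $\{B_r(y_i)\}_{i\in\mathbb{N}}$ with bounded overlap, so that each point of $\mathbb{R}^d$ belongs to at most $N=N(d)$ of them, and aim to establish $\int_{\mathbb{R}^d} \mathcal{V}(x, u_n)\,dx \to 0$. This will yield $u_n \to 0$ in $L^{\mathcal{V}}(\mathbb{R}^d)$ via the standard equivalence between modular and Luxemburg-norm convergence in Musielak-Orlicz spaces.

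The first step is a local pointwise splitting of $\mathcal{V}$ using the three asymptotic regimes encoded in the assumptions. Fix $\eta>0$. By \eqref{mla1b} there is $\delta>0$ with $\mathcal{V}(x,t)\leq \eta\,\mathcal{H}(x,t)$ for $|t|\leq\delta$; by \eqref{2eq60} there is $T>\delta$ with $\mathcal{V}(x,t)\leq \eta\,\mathcal{H}_\ast(x,t)$ for $|t|\geq T$; and by \eqref{bf} together with \eqref{cA}, the function $\mathcal{V}(x,t)$ is bounded pointwise by some $M_\eta$ when $\delta\leq|t|\leq T$. The intermediate region is controlled via the Chebyshev-type bound $|\{|u_n|>\delta\}\cap B_r(y)|\leq c_0^{-1}\int_{B_r(y)}\mathcal{H}(x,u_n)\,dx$, where $c_0:=\inf_x \mathcal{H}(x,\delta)>0$ by \eqref{bf} together with monotonicity of $\mathcal{H}$ in its second argument. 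Combining the three regions yields, for some $K_\eta>0$,
\begin{equation*}
\int_{B_r(y)} \mathcal{V}(x,u_n)\,dx \;\leq\; K_\eta\int_{B_r(y)} \mathcal{H}(x,u_n)\,dx \;+\; \eta \int_{B_r(y)} \mathcal{H}_\ast(x,u_n)\,dx.
\end{equation*}

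The second and main step is to turn this local estimate into a decaying global bound using \eqref{lionss}; this is where the principal difficulty lies. A naive summation via bounded overlap produces a bound proportional to $\int_{\mathbb{R}^d}\mathcal{H}(x,u_n)\,dx+\int_{\mathbb{R}^d}\mathcal{H}_\ast(x,u_n)\,dx$, which is merely bounded and does not use the sup-smallness. To exploit the latter, I would upgrade the local estimate to a Gagliardo-Nirenberg-type modular interpolation on each ball, of the schematic form
\begin{equation*}
\int_{B_r(y)} \mathcal{V}(x,u_n)\,dx \;\leq\; C\Big(\int_{B_r(y)}\mathcal{H}(x,u_n)\,dx\Big)^{\alpha}\Big(\|u_n\|_{W^{1,\mathcal{H}}(B_r(y))}^{\beta}+1\Big)
\end{equation*}
for some $\alpha,\beta>0$, derived from the continuous Sobolev-type embedding $W^{1,\mathcal{H}}(B_r(y))\hookrightarrow L^{\mathcal{H}_\ast}(B_r(y))$ in the bounded-domain theory of \cite{Bahrouni-Bahrouni-Missaoui-Radulescu-2024} combined with a Young-type modular interpolation between $\mathcal{H}$ and $\mathcal{H}_\ast$. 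The delicate point is that the explicit form of $\mathcal{H}$ is unknown and its exponents depend on the solution itself, so the argument must be carried out by hand, treating separately the regime $|u_n|\leq 1$ (where by hypothesis (H)(ii) the exponents $p(x,|u_n|)$ and $q(x,|u_n|)$ collapse to $p(x)=p(x,1)$ and $q(x)=q(x,1)$, returning us to a fixed variable-exponent framework) and the regime $|u_n|\geq 1$ (where the monotonicity of the exponents in $t$ can be used to sandwich $\mathcal{H}$ between fixed-exponent expressions). With such an interpolation in hand, Hölder's inequality for sums together with bounded overlap produces
\begin{equation*}
\int_{\mathbb{R}^d} \mathcal{V}(x,u_n)\,dx \;\leq\; C\Big(\sup_{y\in\mathbb{R}^d}\int_{B_r(y)} \mathcal{H}(x,u_n)\,dx\Big)^{\alpha}\cdot\|u_n\|_{W^{1,\mathcal{H}}(\mathbb{R}^d)}^{\beta'},
\end{equation*}
which tends to zero by \eqref{lionss} and the boundedness of $\{u_n\}$ in $W^{1,\mathcal{H}}(\mathbb{R}^d)$, completing the proof.
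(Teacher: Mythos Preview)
Your three-regime splitting in the first step is sound and matches the paper's starting point, but the second step---the heart of the argument---has a genuine gap. The ``schematic'' modular interpolation
\[
\int_{B_r(y)} \mathcal{V}(x,u_n)\,dx \;\leq\; C\Big(\int_{B_r(y)}\mathcal{H}(x,u_n)\,dx\Big)^{\alpha}\Big(\|u_n\|_{W^{1,\mathcal{H}}(B_r(y))}^{\beta}+1\Big)
\]
is never proved, and in the non-homogeneous Musielak--Orlicz setting there is no ready-made ``Young-type modular interpolation between $\mathcal{H}$ and $\mathcal{H}_\ast$'' to invoke; the split into $|u_n|\le 1$ and $|u_n|\ge 1$ does not by itself produce a factor $(\int_{B_r(y)}\mathcal{H})^\alpha$ with $\alpha>0$. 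The classical Lions proof works because the $L^p$--$L^{p^\ast}$ H\"older interpolation has exactly the homogeneity that makes the summation close up; reproducing that homogeneity here is precisely the hard point you have left open. Worse, the ``$+1$'' in your local estimate is fatal for the summation you announce: over an infinite cover $\sum_i 1=\infty$, so no H\"older-for-sums manoeuvre can yield a finite global bound of the form $(\sup_y\int_{B_r(y)}\mathcal{H})^\alpha\|u_n\|_{W^{1,\mathcal{H}}(\mathbb{R}^d)}^{\beta'}$ from it.

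The paper avoids interpolation altogether. It performs the three-regime split \emph{globally}: the pieces $\{|u_n|\le\delta\}$ and $\{|u_n|\ge T\}$ are controlled directly by the global modular bounds on $\rho_{\mathcal{H}}(u_n)$ and $\rho_{\mathcal{H}_\ast}(u_n)$ (the latter via Theorem~\ref{Inj1}), each contributing $\varepsilon/3$. For the middle piece one bounds $\int_{\{\delta<|u_n|<T\}}\mathcal{V}(x,u_n)\,dx\le C_{\delta,T}\,|\{\delta<|u_n|<T\}|$ and then argues, by a covering/contradiction argument, that this measure must tend to zero: if it stayed bounded below along a subsequence, one locates a fixed ball $B_r(y_0)$ on which $|\{\delta<|u_n|<T\}\cap B_r(y_0)|\ge\sigma>0$ infinitely often, whence $\int_{B_r(y_0)}\mathcal{H}(x,u_n)\,dx\ge c_0\sigma>0$, contradicting \eqref{lionss}. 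This route is elementary and sidesteps the modular-interpolation obstacle you ran into.
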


Having obtained the embedding results, we apply them to study the existence and multiplicity of weak solutions of equation \eqref{prb} involving the double phase operator \eqref{oper}. This is our last main goal in this paper.  Our main tool in this investigation is a theorem on the existence of two critical points established by Bonanno--D'Agu\`\i{} \cite[Theorem 2.1 and Remark 2.2]{Bonanno-DAgui-2016}. This theorem is an important consequence of Bonanno's local minimal theorem \cite[Theorem 2.3]{Bonanno-2012} in conjunction with the Ambrosetti--Rabinowitz theorem.

For the the nonlinearity $f$, we suppose the following hypotheses.

\begin{enumerate}[label=\textnormal{(F)},ref=\textnormal{F}]
	\item\label{F}
		$f\colon \RN \times \R \rightarrow \R$ is a Carath\'{e}odory function (i.e.\,$f(\cdot, t)$ is measurable in $\RN$ for all $t\in  \R$ and $f(x, \cdot)$ is continuous in $\R$ for a.a.\,$x \in \RN$ ), such that
		\begin{enumerate}
			\item[\textnormal{(i)}]
				There  exist  a generalized \textnormal{N}-function $\mathcal{B}(x,t)=\int_{0}^t b(x,s) \,\mathrm{d}s$ such that
				\begin{align*}
					\vert f(x,t)\vert \leq b(x,\vert t\vert) \quad\text{for all }t\in \mathbb{R} \text{ and for a.a.\,}x\in \RN,
				\end{align*}
				and
				\begin{align*}
					q^+\leq b^-:=\inf\limits_{t>0}\frac{b(x,t)t}{\mathcal{B}(x,t)}\leq b^{+}:=\sup\limits_{t>0}\frac{b(x,t)t}{\mathcal{B}(x,t)}<p^-_\ast \quad \text{for a.a.\,} x\in \RN.
				\end{align*}
			\item[\textnormal{(ii)}]
				$\displaystyle{\lim\limits_{t\rightarrow \pm\infty}\frac{F(x,t)}{ \vert t\vert^{q^+}}=+\infty}$ uniformly in $x \in \R$, where $\displaystyle{F(x,t)=\int^t_0 f(x,s)\,\mathrm{d}s}$.
			\item[\textnormal{(iii)}]
				$f(x, t)=o\left(|t|^{p^{-}-1}\right)$ as $|t| \rightarrow 0$ uniformly in $x \in \RN$.
			\item[\textnormal{(iv)}]
				$\tilde{F}(x, t)=\frac{1}{q^{+}} f(x, t) t-F(x, t)>0$ for $|t|$ large and there exist constants $\sigma >\frac{d}{p^{-}}$, $\tilde{c}>0$ and $r_0>0$, such that
				\begin{align}\label{ex0}
					|f(x, t)|^\sigma \leq \tilde{c}|t|^{\left(p^{-}-1\right) \sigma} \tilde{F}(x, t) \quad \text{for all }(x, t) \in \RN \times \mathbb{R} \text{ with }|t| \geq r_0.
				\end{align}
		\end{enumerate}
\end{enumerate}

\begin{example}
	Let $f\colon \mathbb{R}^d \times \mathbb{R} \to \mathbb{R}$ be the function defined by
	\begin{align*}
		f(x,t) = |t|^{q^+ - 2}t \ln(1 + |t|) + \frac{1}{q^+} \frac{|t|^{q^+ - 1}t}{1 + |t|}.
	\end{align*}
	Its associated primitive function is given by
	\begin{align*}
		F(x,t) = \frac{1}{q^+} |t|^{q^+} \ln(1 + |t|).
	\end{align*}
	It is clear that the nonlinearity $f$ satisfies conditions \eqref{F}\textnormal{(i)}--\textnormal{(iii)}. To verify \textnormal{(iv)}, we compute
	\begin{align*}
		\widetilde{F}(x,t) = \frac{1}{q^+} tf(x,t) - F(x,t) = \frac{1}{(q^+)^2} \frac{|t|^{q^+ + 1}}{1 + |t|},
	\end{align*}
	which is strictly positive for large values of $|t|$. Furthermore, the function $f$ satisfies the inequality \eqref{ex0} for
	\begin{align*}
		\frac{d}{p^-} \leq \sigma \leq \frac{q^+}{q^+ - p^- + 1}.
	\end{align*}
\end{example}

Now, we present the main existence result of this paper.

\begin{theorem} \label{thex}
	Assume that \eqref{H}, \eqref{V0} and \eqref{F} hold. Furthermore, suppose that there exist two positive constants $r, \eta$ satisfying
	\begin{equation}\label{318}
		\max \left\{\eta^{p_{-}}, \eta^{q_{+}}\right\}<\delta r,
	\end{equation}
	such that
	\begin{enumerate}
		\item[$\left(\mathrm{H}_1\right)$]
			$F(x, t) \geq 0$ for a.a.\,$x \in \Omega$ and for all $t \in[0, \eta]$;
		\item[$\left(\mathrm{H}_2\right)$]
			$\alpha(r)<\beta(\eta)$,
	\end{enumerate}
	where $\delta$, $\alpha(r)$, and $\beta(\eta)$ will be defined in Section \ref{App} (see \eqref{thta}, \eqref{ar}, and \eqref{be}, respectively). Then, for each $\lambda \in \Lambda$, where
	\begin{align*}
		\Lambda:=\left[\frac{1}{\beta(\eta)}, \frac{1}{\alpha(r)}\right],
	\end{align*}
	problem \eqref{prb} has at least two nontrivial weak solutions $u_{\lambda, 1}, u_{\lambda, 2} \in \WV$ with opposite energy sign.
\end{theorem}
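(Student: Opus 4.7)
I would cast \eqref{prb} in variational form on $X := \WV$ by introducing the $C^1$ functionals
\[
\Phi(u) = \int_{\mathbb{R}^d} \mathcal{H}(x,|\nabla u|)\,\mathrm{d}x + \int_{\mathbb{R}^d} V(x)\mathcal{H}(x,|u|)\,\mathrm{d}x,\qquad \Psi(u) = \int_{\mathbb{R}^d} F(x,u)\,\mathrm{d}x,
\]
so weak solutions of \eqref{prb} are exactly the critical points of $J_\lambda := \Phi - \lambda \Psi$. The overall plan is to apply the two-critical-points theorem of Bonanno--D'Agu\`\i{} \cite{Bonanno-DAgui-2016}, which requires: $\Phi$ coercive, sequentially weakly lower semicontinuous, $\Phi(0)=\Psi(0)=0$; $\Psi'$ compact; a local comparison inequality $\alpha(r)<\beta(\eta)$; $J_\lambda$ satisfying the Palais--Smale condition; and $J_\lambda$ unbounded from below.

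First I would establish the abstract properties of $\Phi$ and $\Psi$. Coercivity and sequential weak lower semicontinuity of $\Phi$ follow from the convexity of $\mathcal{H}(x,\cdot)$ combined with the norm--modular equivalence on $\WV$ and the lower bound $V\geq V_0>0$ from \eqref{V0}. For $\Psi$, hypothesis \eqref{F}(i) supplies a generalized \textnormal{N}-function $\mathcal{B}$ with growth indices satisfying $q^+\leq b^-\leq b^+<p^-_*$; Theorem \ref{Inj3} (applied with $\mathcal{V}=\mathcal{B}$, whose conditions \eqref{bf}, \eqref{cA}, \eqref{2eq60} and one of \eqref{mla1}/\eqref{mla2} are verified using \eqref{F}(i), \eqref{F}(iii)) yields $\WV \hookrightarrow\hookrightarrow L^{\mathcal{B}}(\mathbb{R}^d)$, which makes $\Psi$ of class $C^1$ with compact derivative $\Psi'$.

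Next I would estimate $\alpha(r)$ and $\beta(\eta)$. For $\alpha(r)$, \eqref{F}(i) and \eqref{F}(iii) give a growth bound of the form $|F(x,t)|\leq \varepsilon \mathcal{H}(x,|t|)+C_\varepsilon \mathcal{B}(x,|t|)$, so combining the continuous embedding from Theorem \ref{Inj1}(ii) with the modular--norm inequalities bounds $\sup_{\Phi(u)\leq r}\Psi(u)/r$. For $\beta(\eta)$, I would use a test function $v_\eta$ of sup-norm $\eta$ (a scaled cutoff of a fixed nonnegative bump, so that $\Phi(v_\eta)\leq C\max(\eta^{p^-},\eta^{q^+})$); hypothesis $(\mathrm{H}_1)$ then guarantees $\Psi(v_\eta)\geq 0$, and the restriction \eqref{318} places $v_\eta$ in the sublevel set $\{\Phi<r\}$. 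Assumption $(\mathrm{H}_2)$ makes the interval $\Lambda=[1/\beta(\eta),1/\alpha(r)]$ nonempty and fulfils the key inequality of \cite[Theorem 2.1]{Bonanno-DAgui-2016}, producing the first critical point $u_{\lambda,1}\in\WV$ with $J_\lambda(u_{\lambda,1})<0$.

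For the second critical point I would invoke the mountain-pass part of the Bonanno--D'Agu\`\i{} result. Hypothesis \eqref{F}(ii) ensures $J_\lambda(tw)\to -\infty$ along any fixed $w\geq 0$ with $w\not\equiv 0$, so $J_\lambda$ is unbounded from below and a mountain-pass geometry exists. For boundedness of a Palais--Smale sequence $\{u_n\}$ I would use the Jeanjean-type condition \eqref{F}(iv): the identity $J_\lambda(u_n)-\tfrac{1}{q^+}\langle J_\lambda'(u_n),u_n\rangle=\lambda\!\int\!\tilde F(x,u_n)$ together with the integral bound \eqref{ex0}, H\"older's inequality in the exponent $\sigma$ with $\sigma>d/p^-$, and the Sobolev conjugate embedding from Theorem \ref{Inj1}(i) yields $\|u_n\|_{\WV}$ bounded. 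The main obstacle is the passage from weak to strong convergence of such a sequence: it requires an $(S_+)$-type property for the modular double phase operator whose exponents depend on the unknown $u$ and on $\nabla u$, which is not standard. I would prove it by testing $\langle J_\lambda'(u_n)-J_\lambda'(u),u_n-u\rangle\to 0$, using Theorem \ref{Inj3} to eliminate the nonlinear term $\lambda\int (f(\cdot,u_n)-f(\cdot,u))(u_n-u)$ via the compact embedding into $L^{\mathcal{B}}$, exploiting the monotonicity in $\xi$ of $a(x,|\xi|)\xi$ for each fixed $x$, and combining the resulting pointwise a.e.\ convergence of $\nabla u_n$ with Vitali's theorem in the Musielak modular to upgrade weak to strong convergence in $\WV$. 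Once Palais--Smale is secured, the theorem of Bonanno--D'Agu\`\i{} delivers the second critical point $u_{\lambda,2}$ with $J_\lambda(u_{\lambda,2})>0$, providing two weak solutions of \eqref{prb} with opposite energy sign.
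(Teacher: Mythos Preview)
Your overall architecture matches the paper's: cast \eqref{prb} variationally, apply the Bonanno--D'Agu\`\i{} two-critical-points theorem with the bump test function to produce the negative-energy solution, and use \eqref{F}(ii) for unboundedness from below together with a compactness condition for the positive-energy solution. The estimates for $\alpha(r)$ and $\beta(\eta)$ and the use of the compact embedding to pass from weak to strong convergence are also in line with the paper (the paper packages the $(S_+)$-property separately in Theorem~\ref{op2}, so you do not need to re-derive it).

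The genuine gap is in your boundedness argument for the Cerami sequence. From $J_\lambda(u_n)-\tfrac{1}{q^+}\langle J_\lambda'(u_n),u_n\rangle$ you correctly extract $\int_{\mathbb{R}^d}\tilde F(x,u_n)\,\mathrm{d}x\leq C$, but this bound together with \eqref{ex0} and H\"older does \emph{not} directly yield $\|u_n\|_{\WV}$ bounded. What \eqref{ex0} gives is only
\[
\int_{\{|u_n|\geq r_0\}}\Big|\frac{f(x,u_n)}{|u_n|^{p^--1}}\Big|^{\sigma}\mathrm{d}x\leq \tilde c\,C,
\]
and feeding this into $\langle J_\lambda'(u_n),u_n\rangle=o(1)$ via H\"older produces an estimate of the form $\|u_n\|^{p^-}\leq C''\|u_n\|^{p^-}+(\text{lower order})$, with no control on the constant $C''$. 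The paper instead argues by contradiction: assume $\|u_n\|_{\WV}\to\infty$, set $v_n=u_n/\|u_n\|_{\WV}$, and from $\langle J_\lambda'(u_n),u_n\rangle=o(1)$ obtain $\limsup_n\int f(x,u_n)u_n/\|u_n\|^{p^-}\geq 1$. This integral is then split over the three regions $\{|u_n|<a_\varepsilon\}$, $\{a_\varepsilon\leq|u_n|\leq b_\varepsilon\}$, $\{|u_n|>b_\varepsilon\}$. Hypothesis \eqref{F}(iii) handles the first region; for the third, one first shows $|\{|u_n|>b\}|\to 0$ as $b\to\infty$ uniformly in $n$ (from $\mathfrak F(b)|\{|u_n|>b\}|\leq C$ with $\mathfrak F(b)\to\infty$), and only then does the H\"older estimate with exponent $\sigma$ and the embedding $\WV\hookrightarrow L^{p^-\sigma'}$ (Theorem~\ref{Inj30}, not Theorem~\ref{Inj1}) make that piece small; the middle region uses $\int_{\{a\leq|u_n|\leq b\}}|v_n|^{p^-}\to 0$. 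Your sketch has the right ingredients listed but misses this mechanism entirely, and without it the argument does not close. Note also that the abstract theorem you invoke requires the Cerami condition, not Palais--Smale; the distinction matters in Step~1 since you need $(1+\|u_n\|)J_\lambda'(u_n)\to 0$ to get $\langle J_\lambda'(u_n),u_n\rangle$ bounded a priori.
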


The condition (iv) in \eqref{F} is crucial for proving that every Cerami sequence is bounded  (see step 1 in Lemma \ref{crmi}). It was initially introduced by Ding--Lee \cite{Ding-Lee-2006} for some scalar Schr\"{o}dinger equations and later slightly modified in Bahrouni--Missaoui--R\u{a}dulescu \cite{Bahrouni-Missaoui-Radulescu-2025} for some nonlinear Robin problems in Orlicz spaces. Note that there are some nonlinearities that fulfill condition (iv) but do not satisfy the following Ambrosetti--Rabinowitz condition (see \cite{Bahrouni-Missaoui-Radulescu-2025})
\begin{enumerate}
	\item[(AR)]
		there exists $\theta > q^+$ such that $0 < \theta F(x, t) \leq tf(x, t)$ for a.a.\,$x \in \RN$ and for all $t \neq 0$.
\end{enumerate}

Recently, Amoroso--Bonanno--D'Agu\`\i--Winkert \cite{Amoroso-Bonanno-DAgui-Winkert-2024} established the existence and multiplicity of bound\-ed solutions for a quasilinear problem driven by the operator \eqref{I4} by using  \cite[Theorem 2.1 and Remark 2.2]{Bonanno-DAgui-2016} which will be also our key tool for the proof of Theorem \ref{thex}. The main difference between their work and ours (aside from the difference in the operators) lies in the assumptions imposed. However, both our assumption \eqref{F}(ii), (iv) and their assumption $(\mathrm{H}_f)(\mathrm{iii}), (\mathrm{iv})$ in \cite{Amoroso-Bonanno-DAgui-Winkert-2024} avoid the need for $f$ to satisfy the usual Ambrosetti--Rabinowitz condition, making them less restrictive. The assumptions \eqref{F} and $(\mathrm{H}_f)$ are similar in that way that both require the primitive of $ f $ to be $q^+$-superlinear. However, instead of the behavior specified by $(\mathrm{H}_f)(\mathrm{iii})$ in \cite{Amoroso-Bonanno-DAgui-Winkert-2024}, we impose a different condition near $\pm \infty$, as described in \eqref{F}(iv). Additionally, we introduce a condition for the behavior of $f$ near the origin, see \eqref{F}(iii). These changes require a new technical analysis to take the Cerami condition into account (see lemma \ref{crmi}).

The paper is organized as follows. In Section \ref{Prel}, we provide the necessary preliminaries, introducing Musielak-Orlicz Sobolev spaces and discussing the functional setting. In particular, the new Musielak-Orlicz Sobolev space $W^{1,\mathcal{H}}(\mathbb{R}^d)$ and the weighted Musielak-Orlicz Sobolev space $W^{1,\mathcal{H}}_V(\mathbb{R}^d)$ are treated in the subsection \ref{FST}. Section \ref{Emb} will be devoted to the proofs of the continuous and compact embedding results, (see Theorems \ref{Inj1}, \ref{Inj2}, \ref{Inj3}, \ref{Inj30} and \ref{thms}). Finally, Section \ref{App} explores applications, including some properties of the energy functional  and the operator related to our problem \eqref{prb}, and  the proof of Theorem \ref{thex}.

%********************************************************************
\section{Preliminaries}\label{Prel}
%********************************************************************

This section is divided into two subsections. In the first part, we recall some known definitions and results about Musielak-Orlicz Sobolev spaces. In the second part, we define the new Musielak-Orlicz space $L^{\h}(\RN)$ and the Musielak-Orlicz Sobolev space $W^{1,\h}(\RN)$. Moreover, we introduce the functional space $\WV$ associated with problem \eqref{prb} and establish additional properties of the Sobolev conjugate of $\h$, see Lemmas \ref{lemR2} and \ref{lemR3}.

%********************************************************************
\subsection{Musielak-Orlicz Sobolev spaces}\label{MOSG}
%********************************************************************

In this subsection, we explore definitions and properties relevant to the Musielak-Orlicz spaces and Musielak-Orlicz Sobolev spaces. We refer to the paper by Bahrouni--Bahrouni--Missaoui--R\u{a}dulescu \cite{Bahrouni-Bahrouni-Missaoui-Radulescu-2024}, Crespo-Blanco--Gasi\'{n}ski--Harjulehto--Winkert \cite{Crespo-Blanco-Gasinski-Harjulehto-Winkert-2022}, Fan \cite{Fan-2012-1,Fan-2012-2} and the monographs by Chlebicka--Gwiazda--\'{S}wierczewska-Gwiazda \cite{Chlebicka-Gwiazda-Swierczewska-Gwiazda-Wroblewska-Kaminska-2021}, Diening--Harjulehto--H\"{a}st\"{o}--R$\mathring{\text{u}}$\v{z}i\v{c}ka \cite{Diening-Harjulehto-Hasto-Ruzicka-2011}, Harjulehto--H\"{a}st\"{o} \cite{Harjulehto-Hasto-2019}, Musielak \cite{Musielak-1983}, Papageorgiou--Winkert \cite{Papageorgiou-Winkert-2024}.

\begin{definition}\label{dfnfct}
	A function $\h\colon\RN\times[0,\infty)\to [0,\infty)$ is called a generalized \textnormal{N}-function if it satisfies the following conditions:
	\begin{enumerate}
		\item[\textnormal{(1)}]
			For a.a.\,$x\in \RN$, $t\mapsto \h(x,t)$ is even, continuous, nondecreasing and convex, and for all $t\geq 0$, $x\mapsto\h(x,t) $ is measurable;
		\item[\textnormal{(2)}]
			$\displaystyle{\lim\limits_{t\rightarrow 0}\frac{\h(x,t)}{t}=0}$ for a.a.\,$x\in \RN;$
		\item[\textnormal{(3)}]
			$\displaystyle{\lim\limits_{t\rightarrow \infty}\frac{\h(x,t)}{t}=\infty}$ for a.a.\,$x\in \RN;$
		\item[\textnormal{(4)}]
			$\displaystyle{\h(x,t)>0}$ for all $t>0$ and for a.a.\,$x\in \RN$ and $\h(x,0)=0$ for a.a.\,$x\in \RN$.
	\end{enumerate}
\end{definition}

\begin{remark}
	We give an equivalent definition of a generalized \textnormal{N}-function that admits an integral representation, see Pick--Kufner--John--Fu\v{c}\'{\i}k \cite[Definition 4.2.1]{Pick-Kufner-John-Fucik-2013}. For $x \in \RN$ and $t \geq 0$, we denote by $h(x,t)$ the right-hand derivative of $\h(x,\cdot)$ at $t$, and define $h(x,t) =-h(x,-t)$ for $t < 0$. Then for each $x \in \RN$, the function $h(x,\cdot)$ is odd, $h(x,t) \in \R$ for $t \in \R,\ h(x, 0) = 0,\  h(x,t) > 0$ for $t > 0,\ h(x,\cdot) $ is right-continuous and nondecreasing on $[0,+\infty),\  h(x,t)\to +\infty$ as $t \rightarrow +\infty,$ and
	\begin{align*}
		\h(x,t)=\int_{0}^{|t|} h(x,s)\,\mathrm{d}s\quad \text{ for } x\in \RN \text{ and } t \in \R.
	\end{align*}
\end{remark}

\begin{definition}$~$
	\begin{enumerate}
		\item[\textnormal{(1)}]
			We say that a generalized \textnormal{N}-function $\h$ satisfies the $\Delta_2$-condition if there exist $C_0 > 0$ and a nonnegative function $m \in L^1(\RN)$ such that
			\begin{align*}
				\h(x,2t)\leq C_0\h(x,t)+m(x)\quad\text{for a.a.\,}x\in \RN \text{ and for all } t\geq 0.
			\end{align*}
		\item[\textnormal{(2)}]
			A generalized \textnormal{N}-function $\h$ is said to satisfy the condition:
			\begin{enumerate}
				\item[\textnormal{(A$_0$)}]
					if there exists $\beta \in(0,1]$ such that
					\begin{align*}
						\beta \leq \varphi^{-1}(x, 1) \leq \frac{1}{\beta}
					\end{align*}
					for a.a.\,$x \in \mathbb{R}^d$;
				\item[\textnormal{(A$_1$)}]
					if there exists $\beta \in(0,1]$ such that
					\begin{align*}
						\beta \varphi^{-1}(x, t) \leq \varphi^{-1}(y, t)
					\end{align*}
					for every $t \in\left[1, \frac{1}{|B|}\right]$, for a.a.\,$x, y \in B$, and every ball $B \subset \mathbb{R}^d$ with $|B| \leq 1$;
				\item[\textnormal{(A$_2$)}]
					if for every $s>0$ there exist $\beta \in(0,1]$ and $h \in L^1\left(\mathbb{R}^n\right) \cap L^{\infty}\left(\mathbb{R}^n\right)$ such that
					\begin{align*}
						\beta \varphi^{-1}(x, t) \leq \varphi^{-1}(y, t)
					\end{align*}
					for a.a.\,$x, y \in \mathbb{R}^d$ and for all $t \in[h(x)+h(y), s]$.
			\end{enumerate}
	\end{enumerate}
\end{definition}

\begin{definition}\label{ddffd}
	Let $\h_1$ and $\h_2$ be two generalized \textnormal{N}-functions.
	\begin{enumerate}
		\item[\textnormal{(1)}]
			We say that $\h_1$ increases essentially slower than $\h_2$ near infinity and we write $\h_1 \ll \h_2$, if for any $k>0$
			\begin{align*}
				\lim _{t \rightarrow \infty} \frac{\h_1(x, k t)}{\h_2(x, t)}=0\quad \text {uniformly in}\  x \in \RN.
			\end{align*}
		\item[\textnormal{(2)}]
			We say that $\h_1 $ is weaker than $\h_2$, denoted by $\h_1 \preceq \h_2$, if there exist two positive constants $C_1, C_2$ and a nonnegative function $m \in L^1(\RN)$ such that
			\begin{align*}
				\h_1(x, t) \leq C_1 \h_2\left(x, C_2 t\right)+m(x)\quad \text{for a.a.\,} x\in \RN \text{ and for all }  t\geq 0.
			\end{align*}
	\end{enumerate}
\end{definition}

\begin{definition}%\label{CF}
	For any generalized \textnormal{N}-function $\h$, the function $\widetilde{\h}\colon\RN\times\mathbb{R}\to \mathbb{R}$ defined by
	\begin{align*}%\label{Cf}
		\widetilde{\h}(x,t):=\sup_{\tau\geq 0}\left( t\tau-\h(x,\tau)\right)\quad \text{for all } x\in \RN \text{ and for all }  t\geq 0,
	\end{align*}
	is called the conjugate function of $\h$.
\end{definition}

In view of the definition of the conjugate function $\widetilde{\h}$, we have the following Young type inequality:
\begin{align}\label{Yi}
	\tau\sigma\leq \h(x,\tau)+\widetilde{\h}(x,\sigma)\quad \text{for all } x\in \RN \text{ and for all }  \tau,\sigma\geq0.
\end{align}

\begin{remark}%\label{Delta}
	$~$
	\begin{enumerate}
		\item[\textnormal{(1)}]
			Note that the conjugate function $\widetilde{\h}$ is also a generalized \textnormal{N}-function.
		\item[\textnormal{(2)}]
			If there exist $m,\ell\in \mathbb{R}$ such that
			\begin{align}\label{D2}
				1\leq m\leq \frac{h(x,t)t}{\h(x,t)}\leq \ell\quad \text{for all } x\in \RN \text{ and for all }  t> 0,
			\end{align}
			then the generalized \textnormal{N}-function $\h$ satisfies the $\Delta_2$-condition, see \cite{Chlebicka-Gwiazda-Swierczewska-Gwiazda-Wroblewska-Kaminska-2021}.
	\end{enumerate}
\end{remark}

The next lemma is taken from Bahrouni--Bahrouni--Missaoui--R\u{a}dulescu \cite[Lemma 2.3]{Bahrouni-Bahrouni-Missaoui-Radulescu-2024}.

\begin{lemma}\label{lm1}
	Let $\h$ be a generalized \textnormal{N}-function. We suppose that $t\mapsto h(x,t)$ is continuous and nondecreasing for a.a.\,$x\in \RN$. Moreover, we assume that there exist $m,\ell\in \mathbb{R}$ such that
	\begin{align}\label{D22}
		1<m\leq \frac{h(x,t)t}{\h(x,t)}\leq \ell\quad \text{for all } x\in \RN \text{ and for all }  t> 0,
	\end{align}
	then,
	\begin{align*}%\label{L1}
		\widetilde{\h}(x,h(x,s))\leq (\ell-1)\h(x,s)\quad \text{for all } s\geq 0 \text{ and for all }  x\in \RN,
	\end{align*}
	and
	\begin{align*}%\label{D3}
		\frac{\ell}{\ell-1}:=\widetilde{m}\leq \frac{\widetilde{h}(x,s)s}{\widetilde{\h}(x,s)}\leq \widetilde{\ell}:=\frac{m}{m-1}\quad \text{for all } x\in \RN \text{ and for all }  s> 0,
	\end{align*}
	where $\widetilde{\h}(x,s)=\int_{0}^s \widetilde{h}(x,\upsilon)\,\mathrm{d}\upsilon$.
\end{lemma}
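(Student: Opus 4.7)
The plan is to reduce both estimates to the Legendre--Fenchel equality $\widetilde{\h}(x,h(x,s))=sh(x,s)-\h(x,s)$ and then exploit the hypothesis \eqref{D22} directly. For fixed $x\in\RN$ and $s\geq0$, I would analyze the concave function $\tau\mapsto\tau h(x,s)-\h(x,\tau)$, whose right derivative $h(x,s)-h(x,\tau)$ is nonnegative for $\tau\leq s$ and nonpositive for $\tau\geq s$ because $h(x,\cdot)$ is nondecreasing and continuous. Consequently the supremum in the definition of $\widetilde{\h}$ is attained at $\tau=s$, yielding the claimed Legendre--Fenchel identity. The first inequality of the lemma then follows immediately from the upper bound $sh(x,s)\leq\ell\h(x,s)$ in \eqref{D22}, since $\widetilde{\h}(x,h(x,s))=sh(x,s)-\h(x,s)\leq(\ell-1)\h(x,s)$.

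For the second assertion I would perform a change of variable. Setting $\beta:=sh(x,s)/\h(x,s)\in[m,\ell]$ and using the Legendre--Fenchel equality, one obtains
\begin{equation*}
\frac{sh(x,s)}{\widetilde{\h}(x,h(x,s))}=\frac{sh(x,s)}{sh(x,s)-\h(x,s)}=\frac{\beta}{\beta-1}.
\end{equation*}
Since $m>1$ and the map $\beta\mapsto\beta/(\beta-1)$ is strictly decreasing on $(1,\infty)$, this yields
\begin{equation*}
\frac{\ell}{\ell-1}\leq\frac{sh(x,s)}{\widetilde{\h}(x,h(x,s))}\leq\frac{m}{m-1}.
\end{equation*}
To transfer this from $\h$ to $\widetilde{\h}$, I would set $\sigma=h(x,s)$. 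Because $h(x,\cdot)$ is continuous, nondecreasing, vanishes at the origin, and blows up at infinity (the blow-up follows from $h(x,t)\geq m\h(x,t)/t$ together with the N-function property $\h(x,t)/t\to\infty$), its range covers all of $[0,\infty)$, and continuity forces $\widetilde{h}(x,\sigma)=s$. Substituting $\sigma=h(x,s)$ in the previous display produces the desired two-sided bound for every $\sigma>0$.

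The main obstacle is the rigorous identification of the right inverse and the attainment in the Young inequality: one needs $\widetilde{h}(x,h(x,s))=s$ pointwise, which would fail if $h(x,\cdot)$ had jumps (creating a flat piece of $\widetilde{h}$) or a bounded range. The continuity hypothesis on $h(x,\cdot)$ together with the defining properties of a generalized N-function are precisely what make this identification valid; once it is in place, the argument reduces to a purely algebraic manipulation based on \eqref{D22}.
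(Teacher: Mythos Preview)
Your argument is correct and follows the standard route via the equality case of Young's inequality. The paper itself does not supply a proof of this lemma; it merely cites it from \cite[Lemma~2.3]{Bahrouni-Bahrouni-Missaoui-Radulescu-2024}, so there is nothing to compare against here.

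One small point of care in your last paragraph: when $h(x,\cdot)$ is merely nondecreasing it may be constant on some interval $[a,b]$ at a value $\sigma$, in which case there are many $s$ with $h(x,s)=\sigma$ and the identity $\widetilde{h}(x,\sigma)=s$ holds only for the particular choice $s=b$ (the right-continuous generalized inverse). This does not affect your conclusion, since for that choice one still has $\widetilde{\h}(x,\sigma)=b\sigma-\h(x,b)$ and $bh(x,b)/\h(x,b)\in[m,\ell]$, so the ratio $\widetilde{h}(x,\sigma)\sigma/\widetilde{\h}(x,\sigma)=\beta/(\beta-1)$ with $\beta\in[m,\ell]$ exactly as you wrote. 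Making this explicit would remove any ambiguity in the phrase ``continuity forces $\widetilde{h}(x,\sigma)=s$''.
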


\begin{remark}\label{compl}
	Note that the condition \eqref{D22} implies that $\h$ and its conjugate function $\widetilde{\h}$ satisfy the $\Delta_2$-condition.
\end{remark}

Now, we can define the Musielak-Orlicz space. For $M(\R^d)$ being the set of all measurable functions $u\colon\R^d\to\R$, we define
\begin{align*}
	L^{\h}(\RN):=\left\lbrace u\in M(\R^d)\colon  \rho_{\h}(\lambda u)<+\infty \text{ for some}\ \lambda>0\right\rbrace,
\end{align*}
where
\begin{align}\label{Mo}
	\rho_{\h}(u):= \int_{\RN}\h(x, u)\,\mathrm{d}x.
\end{align}
The space $L^{\h}(\RN)$ is endowed with the Luxemburg norm
\begin{align*}%\label{No}
	\Vert u\Vert_{\lh}:=\inf\left\lbrace \lambda>0\colon  \rho_{\h}\left(\frac{u}{\lambda}\right)\leq 1\right\rbrace.
\end{align*}

\begin{proposition}%\label{HM}
	Let $\h$ be a generalized \textnormal{N}-function that satisfies the $\Delta_2$-condition, then
	\begin{align*}
		\lh =\left\lbrace  u\in M(\R^d)\colon  \rho_{\h}( u)<+\infty\right\rbrace.
	\end{align*}
\end{proposition}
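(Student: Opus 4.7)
The claim has two inclusions, and only one is nontrivial. I would set
\begin{align*}
A := \{u\in M(\R^d) : \rho_{\h}(\lambda u)<+\infty \text{ for some } \lambda>0\},\qquad B := \{u\in M(\R^d) : \rho_{\h}(u)<+\infty\},
\end{align*}
so that the definition gives $L^{\h}(\RN)=A$ and the task is to prove $A=B$.

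The inclusion $B\subseteq A$ is immediate: if $\rho_{\h}(u)<+\infty$, then taking $\lambda=1$ witnesses $u\in A$. So the real content is $A\subseteq B$, and this is where the $\Delta_2$-condition enters. The plan is to fix $u\in A$ with $\rho_{\h}(\lambda u)<+\infty$ for some $\lambda>0$. If $\lambda\geq 1$, monotonicity of $t\mapsto\h(x,t)$ gives $\h(x,u(x))\leq \h(x,\lambda u(x))$ pointwise, and integrating yields $\rho_{\h}(u)<+\infty$ at once. So the whole work is in the regime $\lambda\in(0,1)$.

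For that regime, I would iterate the $\Delta_2$-inequality $\h(x,2t)\leq C_0\h(x,t)+m(x)$ with $m\in L^1(\R^d)$. A straightforward induction on $n$ yields
\begin{align*}
\h(x,2^n t)\leq C_0^{n}\h(x,t)+\Bigl(\sum_{k=0}^{n-1}C_0^{k}\Bigr)m(x)\quad\text{for all } t\geq 0 \text{ and a.a.\,} x\in\RN.
\end{align*}
Next I choose $n\in\N$ large enough so that $2^n\lambda\geq 1$; by monotonicity, $\h(x,u(x))\leq \h(x,2^n\lambda u(x))$, and applying the iterated estimate with $t=\lambda|u(x)|$ gives
\begin{align*}
\h(x,u(x))\leq C_0^n\,\h(x,\lambda u(x)) + \Bigl(\sum_{k=0}^{n-1}C_0^{k}\Bigr)m(x).
\end{align*}
Integrating over $\R^d$ produces $\rho_{\h}(u)\leq C_0^n\rho_{\h}(\lambda u)+\bigl(\sum_{k=0}^{n-1}C_0^{k}\bigr)\|m\|_{L^1(\R^d)}<+\infty$, so $u\in B$.

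The only place that requires any care is the iteration of the $\Delta_2$-inequality together with the bookkeeping of the $L^1$ remainder $m$; this is where the assumption $m\in L^1(\R^d)$ (rather than merely a pointwise bound) is essential, since otherwise the additive error term could not be absorbed after integration. Everything else — monotonicity of $\h(x,\cdot)$ to handle the $\lambda\geq 1$ case and to pass from $\lambda u$ to $u$ via the factor $2^n\lambda\geq 1$ — is routine. I expect no deeper obstacle; the proposition is essentially a direct consequence of the $\Delta_2$-condition, mirroring the classical Orlicz-space argument but with the $x$-dependent generalized $N$-function setting.
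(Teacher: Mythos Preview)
Your argument is correct and is precisely the standard one: the inclusion $B\subseteq A$ is trivial, and for $A\subseteq B$ you iterate the $\Delta_2$-inequality $n$ times until $2^n\lambda\ge 1$, then use monotonicity of $\h(x,\cdot)$ and integrate, absorbing the remainder via $m\in L^1(\R^d)$. The induction formula $\h(x,2^n t)\le C_0^n\h(x,t)+\bigl(\sum_{k=0}^{n-1}C_0^k\bigr)m(x)$ is verified by a routine check, and the rest follows.

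There is nothing to compare against: the paper states this proposition without proof, treating it as a known fact from Musielak--Orlicz theory (cf.\ the references to Musielak \cite{Musielak-1983} and Harjulehto--H\"ast\"o \cite{Harjulehto-Hasto-2019} in the surrounding text). Your write-up supplies exactly the classical argument one would give.
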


\begin{proposition} \label{zoo}
	Let $\h$ be a generalized \textnormal{N}-function fulfilling \eqref{D22}, then the following assertions hold:
	\begin{enumerate}
		\item[\textnormal{(1)}]
			$\min \{\l^m, \l^\ell\}\h(x,t)\leq  \h(x,\l t)\leq \max \{ \l^m, \l^\ell\}\h(x,t)$ for a.a.\,$x \in \RN$ and for all $\l, \ t \geq 0. $
		\item[\textnormal{(2)}]
			$\min \left\{\|u\|_{\lh}^{m},\|u\|_{\lh}^{\ell}\right\} \leq \rho_{\mathcal{H}}(u) \leq\max \left\{\|u\|_{\lh}^{m},\|u\|_{\lh}^{\ell}\right\}$ for all $  u\in \lh$.
		\item[\textnormal{(3)}]
			Let $\left\{ u_n\right\}_{n \in \mathbb{N}}\subseteq \lh $ and $  u \in \lh $, then
			\begin{align*}
				\|u_n-u\|_{\lh }\to0 \quad \Longleftrightarrow \quad \rho_\h (u_n-u) \to 0\quad \text{as } n \rightarrow +\infty.
			\end{align*}
	\end{enumerate}
\end{proposition}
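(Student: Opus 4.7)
The plan is to prove the three assertions in the order they are stated, since (2) follows from (1) by a pointwise rescaling argument and (3) follows from (2) applied to the difference $u_n-u$.

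For (1), the key observation is that for fixed $x$ and $t>0$ the function $\phi(\lambda)=\mathcal H(x,\lambda t)$ is absolutely continuous in $\lambda$ (being the integral of $h(x,\cdot)$ composed with an affine map), with $\phi'(\lambda)=h(x,\lambda t)\,t$ a.e. Hypothesis \eqref{D22} evaluated at $\lambda t$ gives
\begin{equation*}
\frac{m}{\lambda}\;\leq\;\frac{\phi'(\lambda)}{\phi(\lambda)}\;=\;\frac{h(x,\lambda t)\,\lambda t}{\lambda\,\mathcal H(x,\lambda t)}\;\leq\;\frac{\ell}{\lambda}.
\end{equation*}
Integrating this differential inequality on $[1,\lambda]$ when $\lambda\geq 1$ yields $\lambda^{m}\mathcal H(x,t)\leq \mathcal H(x,\lambda t)\leq \lambda^{\ell}\mathcal H(x,t)$; the case $0<\lambda\leq 1$ is recovered by applying the previous step with the roles of $t$ and $\lambda t$ swapped (i.e.\ using $\lambda^{-1}\geq 1$). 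The extremal case $t=0$ is trivial since $\mathcal H(x,0)=0$. Combining both regimes yields the stated two-sided bound with $\min$ and $\max$.

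For (2), we may assume $u\neq 0$, and set $\lambda:=\|u\|_{L^{\mathcal H}}>0$. From Remark \ref{compl} the $\Delta_2$-condition holds for $\mathcal H$, which together with the continuity and monotone convergence properties of $\rho_{\mathcal H}$ implies the standard identity $\rho_{\mathcal H}(u/\lambda)=1$ (the Luxemburg norm is attained). Applying (1) pointwise with $s=u(x)/\lambda$ and dilation factor $\lambda$, we get
\begin{equation*}
\min\{\lambda^{m},\lambda^{\ell}\}\,\mathcal H\!\left(x,\tfrac{u(x)}{\lambda}\right)\;\leq\;\mathcal H(x,u(x))\;\leq\;\max\{\lambda^{m},\lambda^{\ell}\}\,\mathcal H\!\left(x,\tfrac{u(x)}{\lambda}\right),
\end{equation*}
and integrating over $\mathbb R^d$ while using $\rho_{\mathcal H}(u/\lambda)=1$ gives precisely the inequalities in (2).

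For (3), observe that $u_n-u\in L^{\mathcal H}(\mathbb R^d)$ by linearity, so (2) applies to it. If $\|u_n-u\|_{L^{\mathcal H}}\to 0$, then both $\|u_n-u\|_{L^{\mathcal H}}^{m}$ and $\|u_n-u\|_{L^{\mathcal H}}^{\ell}$ tend to zero, and the upper bound in (2) forces $\rho_{\mathcal H}(u_n-u)\to 0$. Conversely, if $\rho_{\mathcal H}(u_n-u)\to 0$, then eventually $\rho_{\mathcal H}(u_n-u)\leq 1$, which by definition of the Luxemburg norm implies $\|u_n-u\|_{L^{\mathcal H}}\leq 1$; the lower bound in (2) then reduces to $\|u_n-u\|_{L^{\mathcal H}}^{\ell}\leq \rho_{\mathcal H}(u_n-u)$, so the norm tends to zero as well.

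The main technical point is the derivation in (1): one has to justify differentiating $\lambda\mapsto\mathcal H(x,\lambda t)$ (valid a.e. since $h(x,\cdot)$ is monotone, hence a.e. continuous) and then integrate the logarithmic-derivative bound carefully so as to cover $\lambda<1$ and the boundary case $t=0$. The auxiliary identity $\rho_{\mathcal H}(u/\|u\|_{L^{\mathcal H}})=1$ used in (2) is also subtle, but it is a direct consequence of the $\Delta_2$-condition already recorded in Remark \ref{compl}, together with dominated convergence inside $\rho_{\mathcal H}$.
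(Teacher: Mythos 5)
Your proof is correct. The paper states Proposition \ref{zoo} without proof, treating it as a standard fact from the Musielak--Orlicz literature (cf.\ the references to Musielak, Harjulehto--H\"ast\"o, Chlebicka et al.), and your argument — integrating the logarithmic-derivative bound from \eqref{D22} to get (1), then deducing (2) via the unit-ball property $\rho_{\mathcal H}(u/\|u\|_{L^{\mathcal H}})=1$ and (3) from (2) — is exactly the standard route, with the delicate points (a.e.\ differentiability of $\lambda\mapsto\mathcal H(x,\lambda t)$, attainment of the Luxemburg norm) properly addressed.
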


As a consequence of \eqref{Yi}, we have the following result.

\begin{lemma}[H\"older's type inequality]\label{H1}
	Let  $\h$ be a generalized \textnormal{N}-function that satisfies \eqref{D22}, then
	\begin{align*}%\label{Ho}
		\left\vert \int_{\RN} uv\,\mathrm{d}x \right\vert \leq 2 \Vert u\Vert_{\lh}\Vert v\Vert_{L^{\widetilde{\h}}(\RN)}\quad \text{for all } u\in L^{\h}(\RN) \text{ and for all } v\in L^{\widetilde{\h}}(\RN).
	\end{align*}
\end{lemma}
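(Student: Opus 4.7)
The plan is to derive the inequality from the pointwise Young-type inequality \eqref{Yi} combined with the modular characterization of the Luxemburg norm. The case $u=0$ or $v=0$ is trivial, so I would assume $\lambda_1 := \|u\|_{L^{\h}(\RN)} > 0$ and $\lambda_2 := \|v\|_{L^{\widetilde{\h}}(\RN)} > 0$.

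First I would observe that, thanks to hypothesis \eqref{D22} and Remark \ref{compl}, both $\h$ and $\widetilde{\h}$ satisfy the $\Delta_2$-condition. By the standard property of the Luxemburg norm for $\Delta_2$-type N-functions, the infimum in the definition of $\|\cdot\|_{L^{\h}(\RN)}$ is attained in the sense that
\begin{align*}
    \rho_{\h}\!\left(\frac{u}{\lambda_1}\right) \leq 1
    \quad\text{and}\quad
    \rho_{\widetilde{\h}}\!\left(\frac{v}{\lambda_2}\right) \leq 1.
\end{align*}
This is essentially the unit-ball property contained in Proposition \ref{zoo}(2) (specialized to $\|u\|_{L^{\h}(\RN)} \leq 1$ forcing $\rho_{\h}(u) \leq 1$), applied to the normalized functions $u/\lambda_1$ and $v/\lambda_2$.

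Next, I would apply the pointwise Young-type inequality \eqref{Yi} to $\tau = |u(x)|/\lambda_1$ and $\sigma = |v(x)|/\lambda_2$, yielding
\begin{align*}
    \frac{|u(x)|\,|v(x)|}{\lambda_1 \lambda_2}
    \leq
    \h\!\left(x, \frac{|u(x)|}{\lambda_1}\right)
    +
    \widetilde{\h}\!\left(x, \frac{|v(x)|}{\lambda_2}\right)
    \quad\text{for a.a.\,} x \in \RN.
\end{align*}
Integrating both sides over $\RN$ and using the two modular bounds above,
\begin{align*}
    \frac{1}{\lambda_1 \lambda_2}\int_{\RN} |u(x) v(x)|\,\mathrm{d}x
    \leq
    \rho_{\h}\!\left(\frac{u}{\lambda_1}\right)
    +
    \rho_{\widetilde{\h}}\!\left(\frac{v}{\lambda_2}\right)
    \leq 2.
\end{align*}
Multiplying by $\lambda_1\lambda_2$ and using $\left|\int uv\,\mathrm{d}x\right| \leq \int |uv|\,\mathrm{d}x$ gives the claim.

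The only non-routine point is justifying that the modular at the normalized function is actually $\leq 1$; this is where the $\Delta_2$-condition (ensured by \eqref{D22} via Remark \ref{compl}) is used, since without it one only knows $\rho_{\h}(u/\lambda) \leq 1$ for every $\lambda > \lambda_1$ and must then let $\lambda \downarrow \lambda_1$ and invoke monotone convergence. Either route is straightforward, so no genuine obstacle is expected.
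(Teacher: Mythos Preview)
Your proof is correct and follows exactly the approach the paper indicates: the lemma is stated as ``a consequence of \eqref{Yi}'' (the Young-type inequality) with no further details, and your argument is the standard derivation from that inequality together with the unit-ball property of the Luxemburg norm. Your remark that the $\Delta_2$-condition is not strictly needed (one can instead pass to the limit $\lambda\downarrow\lambda_1$ via monotone convergence) is also accurate.
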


The subsequent proposition deals with some topological properties of the Musielak-Orlicz space, see Musielak \cite[Theorem 7.7 and Theorem 8.5]{Musielak-1983}.

\begin{proposition}\label{AB}
	Let $\h$ be a generalized \textnormal{N}-function.
	\begin{enumerate}
		\item[\textnormal{(i)}]
			The space $\left(L^{\h}(\RN),\|\cdot\|_{\lh}\right)$ is a Banach space.
		\item[\textnormal{(ii)}]
			If $\h$ satisfies \eqref{D2}, then $L^{\h}(\RN)$ is a separable space.
		\item[\textnormal{(iii)}]
			If $\h$ satisfies \eqref{D22}, then $L^{\h}(\RN)$ is a reflexive space.
	\end{enumerate}
\end{proposition}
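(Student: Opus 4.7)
The plan is to follow the classical approach from Musielak's monograph, adapted to the generalized \textnormal{N}-function $\h$ at hand, breaking the argument into the three claims.

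For (i), I take a Cauchy sequence $\{u_n\}_{n\in\N}$ in $(\lh,\|\cdot\|_\lh)$. Fixing $\lambda>0$, the definition of the Luxemburg norm gives $\rho_\h(\lambda(u_n-u_m))\leq 1$ for $n,m\geq N(\lambda)$. Since $\h(x,t)\to\infty$ as $t\to\infty$ a.e., a Chebyshev-type argument on the modular shows that $\{u_n\}$ is Cauchy in measure on every ball $B_R(0)$; hence some subsequence $u_{n_k}$ converges a.e. to a measurable function $u$. Applying Fatou's lemma to $\h(x,\lambda(u-u_n))$ yields $\rho_\h(\lambda(u-u_n))\leq \liminf_k\rho_\h(\lambda(u_{n_k}-u_n))\leq 1$ for $n\geq N(\lambda)$, so $u-u_n\in\lh$ with $\|u-u_n\|_\lh\leq 1/\lambda$. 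Letting $\lambda\to\infty$ gives norm convergence and $u\in\lh$.

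For (ii), assume \eqref{D2}. Under this condition the set of bounded functions with bounded support is dense in $\lh$ and modular convergence controls norm convergence (the $\Delta_2$-argument underlying Proposition \ref{zoo}(3)). I approximate an arbitrary $u\in\lh$ first by truncations $u_n:=u\,\chi_{\{|u|\leq n\}\cap B_n(0)}$; dominated convergence on the modular (using $\h(x,u)\in L^1$) yields $\rho_\h(u_n-u)\to 0$ and hence $\|u_n-u\|_\lh\to 0$. Each $u_n$ is bounded and compactly supported, so it is approximated in modular (again using \eqref{D2}), and therefore in norm, by simple functions of the form $\sum_{i=1}^k r_i\,\chi_{Q_i}$ with rational $r_i$ and dyadic cubes $Q_i\subset B_n(0)$. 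Such simple functions form a countable dense set in $\lh$.

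For (iii), assume \eqref{D22}. By Remark \ref{compl}, both $\h$ and its conjugate $\widetilde{\h}$ satisfy the $\Delta_2$-condition. The main step is the duality identification $(\lh)^*\cong L^{\widetilde{\h}}(\RN)$: the pairing $(u,v)\mapsto\int_{\RN}uv\,\mathrm{d}x$ defines a bounded embedding of $L^{\widetilde{\h}}(\RN)$ into $(\lh)^*$ via Lemma \ref{H1}, while every $\Phi\in(\lh)^*$ is represented by some $v\in L^{\widetilde{\h}}(\RN)$ through a Radon--Nikod\'ym / Hahn--Banach argument, in which $\Delta_2$ for $\h$ provides the density of simple functions needed to recover $v$, and $\Delta_2$ for $\widetilde{\h}$ ensures that the candidate density actually lies in $L^{\widetilde{\h}}(\RN)$ rather than merely in its modular closure. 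Applying the same duality once more gives $(L^{\widetilde{\h}})^*\cong\lh$, so the canonical embedding $\lh\hookrightarrow\lh^{**}$ is surjective, whence $\lh$ is reflexive. I expect this duality step to be the main obstacle: without the two-sided control in \eqref{D22}, one or the other $\Delta_2$ property may fail and the representation of functionals breaks down, so verifying that \eqref{D22} genuinely implies both $\Delta_2$-conditions (via Remark \ref{compl} and Lemma \ref{lm1}) is the crucial input.
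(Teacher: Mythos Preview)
Your proposal is correct and follows the classical approach, but it is worth noting that the paper itself does not actually prove this proposition: it simply cites Musielak's monograph \cite[Theorem 7.7 and Theorem 8.5]{Musielak-1983} and states the result. Your sketch therefore goes beyond what the paper does, supplying the standard arguments that the citation points to (Cauchy-in-measure plus Fatou for completeness; density of simple functions under $\Delta_2$ for separability; the duality $(\lh)^*\cong L^{\widetilde{\h}}(\RN)$ under two-sided $\Delta_2$ for reflexivity). There is no discrepancy in approach to flag, since the paper offers none of its own; your outline is faithful to Musielak's treatment.
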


Now, we are ready to define the Musielak-Orlicz Sobolev space. Let $\h$ be a generalized \textnormal{N}-function. The Musielak-Orlicz Sobolev space is defined as follows
\begin{align*}
	W^{1,\h}(\RN):=\left\lbrace u\in L^{\h}(\RN)\colon  |\nabla u| \in L^{\h}(\RN)\right\rbrace.
\end{align*}
The space $W^{1,\h}(\RN)$ is endowed with the norm
\begin{align*}%\label{NM}
	\Vert u\Vert_{\W}:=\Vert u\Vert_{L^{\h}(\RN)}+\Vert \nabla u\Vert_{L^{\h}(\RN)}\quad \text{for all } u\in W^{1,\h}(\RN),
\end{align*}
where $\|\nabla u\|_{L^{\h}(\RN)} := \|\, |\nabla u|\, \|_{L^{\h}(\RN)}$.

\begin{remark}%\label{Ref}
	If $\h$  satisfies \eqref{D22}, then the space $W^{1,\h}(\RN)$ is a reflexive and separable Banach space with respect to the norm $\Vert \cdot\Vert_{\ww}$.
\end{remark}

%********************************************************************
\subsection{Functional setting} \label{FST}
%********************************************************************

In this subsection, we turn our attention to the specific generalized \textnormal{N}-function $\h$, see \eqref{hdfn} and \eqref{h}. It was developed to solve problems involving a double phase operator with variable exponents, such as in problem \eqref{prb}, where the exponents depend on both the unknown solution and its gradient. Based on the previous subsection, we establish key properties of $\h$, its conjugate function $\widetilde{\h}$, and its associated Sobolev conjugate $\h_\ast$. We then introduce the space $\W$ and summarize its essential properties, see the work of Bahrouni--Bahrouni--Missaoui--R\u{a}dulescu \cite{Bahrouni-Bahrouni-Missaoui-Radulescu-2024}. In the last part of this subsection, we focus on the function space $\WV$ which is relevant to our problem, and we rigorously prove its basic properties.

To this end, we introduce the function $\h\colon\RN \times [0,+\infty[ \rightarrow [0,+\infty[$ by
\begin{align*}%\label{nfcth}
	\h(x,t)&= \int_{0}^{t} h(x,s)\,\mathrm{d}s = \int_{0}^{t}\L( s^{p(x,s)-1}+\mu (x) s^{q(x,s)-2}\r)\,\mathrm{d}s,
\end{align*}
where $0\leq \mu(\cdot) \in C^{0,1}(\mathbb{R}^d)$ and $p(\cdot, \cdot)$ as well as $q(\cdot, \cdot)$ satisfy \eqref{H}.

We start with the following proposition, which is proven by Bahrouni--Bahrouni--Missaoui--R\u{a}dulescu \cite[Propositions 3.1 and 3.3]{Bahrouni-Bahrouni-Missaoui-Radulescu-2024}.

\begin{proposition} \label{nfct}
	Let hypotheses \eqref{H} \textnormal{(i)}, \textnormal{(ii)} be satisfied. Then following hold:
	\begin{enumerate}
		\item[\textnormal{(i)}]
			$\h$ is a generalized \textnormal{N}-function.
		\item[\textnormal{(ii)}]
			$\h$ and $\widetilde{\h}$ fulfill the $\Delta_2$-condition, and we have that
			\begin{align}\label{l22}
				p^{-} \leq \frac{ h(x, t)t}{\mathcal{H}(x, t)} \leq q^+\quad \text{for all } t > 0 \text{ and for a.a.\,}x \in \RN.
			\end{align}
	\end{enumerate}
\end{proposition}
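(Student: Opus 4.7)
For Part~(i), I check the four defining properties of a generalized N-function (Definition~\ref{dfnfct}) directly from $\h(x,t)=\int_0^{|t|}h(x,s)\,\mathrm{d}s$. Measurability of $x\mapsto\h(x,t)$ follows by Fubini from the Carathéodory property of $h$ (inherited from $p,q$ and the continuity of $\mu$). Evenness, continuity, and nondecreasingness in $t$ are immediate since $h(x,\cdot)\geq 0$. Convexity reduces to showing $h(x,\cdot)$ is nondecreasing: on $[0,1]$, $h(x,t)=t^{p(x)-1}+\mu(x)t^{q(x)-1}$ is visibly increasing because the exponents are constant and $\geq 1$; on $[1,\infty)$ both the base $t$ and the exponents $p(x,t)-1,\,q(x,t)-1\geq 1$ are nondecreasing, so each term is nondecreasing. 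Finally, $\h(x,t)/t\to 0$ as $t\to 0$ because $h(x,0^+)=0$ (exponents $\geq 2$), and $\h(x,t)/t\to\infty$ as $t\to\infty$ because $h(x,t)\geq t^{p^--1}$.

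For Part~(ii), Remark~\ref{compl} reduces the $\Delta_2$-condition for both $\h$ and $\widetilde\h$ to the pointwise inequality $p^-\leq h(x,t)t/\h(x,t)\leq q^+$, so this is the actual target. The plan is to decompose $\h=\h_p+\mu\h_q$ with $\h_\alpha(x,t)=\int_0^t s^{\alpha(x,s)-1}\,\mathrm{d}s$ for $\alpha\in\{p,q\}$, establish the analogous bound separately for each summand with the same constants (legitimate since $p^-\leq q^-$ and $p^+\leq q^+$ follow from $p<q$), and then sum.

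Within each summand I split $t\in(0,1]$ from $t\geq 1$. On $(0,1]$ the exponent $\alpha(x,\cdot)$ is constant, so $\h_\alpha(x,t)=t^{\alpha(x)}/\alpha(x)$ and the ratio equals $\alpha(x)\in[p^-,q^+]$ exactly. On $[1,\infty)$ the engine is the a.e.\ chain-rule identity, valid because $\alpha(x,\cdot)$ is monotone and hence almost-everywhere differentiable,
\[
  t^{\alpha(x,t)}=\int_0^t\frac{\mathrm{d}}{\mathrm{d}s}\bigl[s^{\alpha(x,s)}\bigr]\,\mathrm{d}s=\int_0^t\Bigl[\alpha(x,s)\,s^{\alpha(x,s)-1}+s^{\alpha(x,s)}\,\partial_s\alpha(x,s)\log s\Bigr]\,\mathrm{d}s.
\]
The ``correction'' integrand vanishes on $[0,1]$ (since $\partial_s\alpha=0$ there) and is nonnegative on $[1,t]$ (because $\partial_s\alpha\geq 0$ and $\log s\geq 0$), so dropping it gives $t^{\alpha(x,t)}\geq\int_0^t\alpha(x,s)s^{\alpha(x,s)-1}\,\mathrm{d}s\geq p^-\h_\alpha(x,t)$, which is the lower bound.

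The main obstacle I anticipate is the upper bound $t^{\alpha(x,t)}\leq q^+\h_\alpha(x,t)$ on $[1,\infty)$, because the correction term now contributes with the ``wrong'' sign. To close this, the idea is to trade the correction back into the principal term via $s^{\alpha(x,s)}\partial_s\alpha(x,s)\log s=\frac{\mathrm{d}}{\mathrm{d}s}[s^{\alpha(x,s)}]-\alpha(x,s)s^{\alpha(x,s)-1}$, combined with the uniform bound $\alpha(x,s)\leq q^+$ and the monotonicity of $\alpha(x,\cdot)$; this should yield a self-consistent inequality delivering $t^{\alpha(x,t)}\leq q^+\h_\alpha(x,t)$ for each $\alpha\in\{p,q\}$. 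Summing the $p$- and $q$-contributions then gives the global two-sided bound, and the $\Delta_2$-condition follows via Remark~\ref{compl}.
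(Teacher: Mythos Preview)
The paper itself does not prove this statement but defers to \cite[Propositions~3.1 and~3.3]{Bahrouni-Bahrouni-Missaoui-Radulescu-2024}, so there is no in-paper argument to compare against. Your treatment of Part~(i) is correct, and your strategy for the \emph{lower} bound in \eqref{l22} also works: since the correction term $s^{\alpha(x,s)}\partial_s\alpha(x,s)\log s$ vanishes on $[0,1]$ and is nonnegative on $[1,t]$, dropping it yields $t^{\alpha(x,t)}\geq\int_0^t\alpha(x,s)\,s^{\alpha(x,s)-1}\,\mathrm{d}s\geq p^-\h_\alpha(x,t)$ for each $\alpha\in\{p,q\}$, and summing gives the lower half of \eqref{l22}. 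One technical point: a merely monotone $\alpha(x,\cdot)$ need not be absolutely continuous, so the integrated chain rule should be replaced by the inequality $F(t)\geq\int_0^tF'(s)\,\mathrm{d}s$ for the nondecreasing function $F(s)=s^{\alpha(x,s)}$; this is still enough for the lower bound.

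The genuine gap is the \emph{upper} bound. Your ``trade-back'' identity
\[
s^{\alpha(x,s)}\partial_s\alpha(x,s)\log s=\tfrac{\mathrm{d}}{\mathrm{d}s}\bigl[s^{\alpha(x,s)}\bigr]-\alpha(x,s)\,s^{\alpha(x,s)-1}
\]
is just the chain rule rearranged; inserting it back into the integrated chain rule collapses to the tautology $t^{\alpha(x,t)}=t^{\alpha(x,t)}$, so no ``self-consistent inequality'' can come from this manoeuvre. More seriously, monotonicity of $\alpha(x,\cdot)$ together with the uniform bound $\alpha\leq q^+$ is \emph{not} enough to force $t^{\alpha(x,t)}\leq q^+\h_\alpha(x,t)$. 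Take $\mu\equiv 0$ and let $p(x,s)=p^-$ on $[0,M]$, rise continuously to $p^-+\delta$ on $[M,M+1]$, and remain constant thereafter (any small $\delta>0$); this satisfies \eqref{H}(i),(ii), yet at $t=M+1$ one has $\h(x,t)\approx M^{p^-}/p^-$ while $t\,h(x,t)=(M+1)^{p^-+\delta}$, so the ratio $t\,h(x,t)/\h(x,t)$ is of order $M^{\delta}$ and exceeds any prescribed $q^+$ once $M$ is large. Thus the sharp constant $q^+$ cannot follow from \eqref{H}(i),(ii) alone; you need to locate in the cited reference whatever additional quantitative control on the $t$-variation of $p$ and $q$ (or equivalent hypothesis) is actually being used, and invoke that explicitly.
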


The following proposition plays a crucial role in the proof of the continuous embedding theorem related to $\W$.

\begin{proposition}\label{Prop1}
	Let hypotheses \eqref{H} \textnormal{(i)}--\textnormal{(iv)} be satisfied. Then, $\mathcal{H}$ satisfies \ \textnormal{(A$_0$)}, \textnormal{(A$_1$)} and \textnormal{(A$_2$)}.
\end{proposition}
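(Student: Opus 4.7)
The plan is to verify \textnormal{(A$_0$)}, \textnormal{(A$_1$)}, \textnormal{(A$_2$)} in succession, in each case leveraging the integral representation $\mathcal{H}(x,t)=\int_0^t\bigl(s^{p(x,s)-1}+\mu(x)s^{q(x,s)-1}\bigr)\,\mathrm{d}s$, the power-type control from Proposition \ref{zoo}, and the Lipschitz data in hypotheses \eqref{H}(ii)--(iii).

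For \textnormal{(A$_0$)}, I would exploit that $p(\cdot,s)=p(\cdot)$ and $q(\cdot,s)=q(\cdot)$ are independent of $s$ on $[0,1]$ by \eqref{H}(ii), so that $\mathcal{H}(x,1)=\tfrac{1}{p(x)}+\tfrac{\mu(x)}{q(x)}$ admits a uniform two-sided bound $0<A\leq \mathcal{H}(x,1)\leq B$ coming from the global bounds on $p^{\pm}$, $q^{\pm}$ and from the boundedness of $\mu$. Combining this with Proposition \ref{zoo}(1), a single $\beta\in(0,1]$ chosen sufficiently small yields $\mathcal{H}(x,\beta)\leq \beta^{p^-}B\leq 1\leq \beta^{-p^-}A\leq \mathcal{H}(x,1/\beta)$, which by monotonicity of $\mathcal{H}(x,\cdot)$ translates into $\beta\leq \varphi^{-1}(x,1)\leq 1/\beta$.

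For \textnormal{(A$_1$)}, fix a ball $B\subset\mathbb{R}^d$ with $|B|\leq 1$, points $x,y\in B$ and $t\in [1,1/|B|]$, and set $\tau=\varphi^{-1}(x,t)$; the aim is to produce a universal $\beta\in(0,1]$ with $\mathcal{H}(y,\beta\tau)\leq t=\mathcal{H}(x,\tau)$. Combining \textnormal{(A$_0$)} with Proposition \ref{zoo}(1) gives the a priori two-sided bound $\beta_0\leq \tau\leq C|B|^{-1/p^-}$. I would then split into $\tau\in[\beta_0,1]$ and $\tau\geq 1$. In the first range the exponents do not depend on $\tau$, so only the Lipschitz continuity in $x$ of $p$, $q$, and $\mu$ matters and the ratio $\mathcal{H}(y,\beta\tau)/\mathcal{H}(x,\tau)$ is controlled by a constant times $\beta^{p^-}$. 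In the second range the key quantity is $\tau^{p(y,\tau)-p(x,\tau)}\leq \tau^{c_p|x-y|}$, and since $|x-y|\lesssim |B|^{1/d}$ while $\log\tau\lesssim \tfrac{1}{p^-}\log(1/|B|)$, the exponent $c_p|x-y|\log\tau$ stays uniformly bounded as $|B|\to 0$; the same bookkeeping handles the $q$-term, and the $\mu$-term is controlled by the Lipschitz estimate on $\mu$ together with the bound on $\mathcal{H}(x,1)$.

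For \textnormal{(A$_2$)}, the argument is parallel but global in $x,y\in\mathbb{R}^d$, with $t\in [\theta(x)+\theta(y),s]$ for a fixed $s>0$ and a threshold function $\theta \in L^1(\mathbb{R}^d)\cap L^{\infty}(\mathbb{R}^d)$ to be chosen, for instance $\theta(x)=C(1+|x|)^{-\gamma}$ with $\gamma>d$. The upper bound $t\leq s$ combined with \textnormal{(A$_0$)} keeps $\tau=\varphi^{-1}(x,t)$ uniformly bounded above, while the lower bound $t\geq \theta(x)+\theta(y)$ absorbs the off-diagonal degeneracy: after inversion it forces $\tau$ above a level that renders the Lipschitz estimates on $p,q,\mu$ effective even as $|x-y|\to\infty$. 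The main obstacle throughout will be the region $\tau\geq 1$ in both \textnormal{(A$_1$)} and \textnormal{(A$_2$)}, where one must simultaneously track the $x$- and $\tau$-dependence of $p(x,\tau)$ and $q(x,\tau)$; the saving comes from combining the Lipschitz estimate in $x$ from \eqref{H}(iii) with the monotonicity in $t$ from \eqref{H}(ii), reducing the verification to a classical log-Hölder-type cancellation.
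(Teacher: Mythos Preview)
Your handling of \textnormal{(A$_0$)} and \textnormal{(A$_1$)} is correct and matches the paper's approach; the paper simply cites the bounded-domain analogue \cite[Proposition~3.10]{Bahrouni-Bahrouni-Missaoui-Radulescu-2024}, whose proof is of exactly the type you outline (uniform two-sided bounds on $\mathcal{H}(x,1)$ for \textnormal{(A$_0$)}, and the log-H\"older-type cancellation $|x-y|\log\tau\lesssim |B|^{1/d}\log(1/|B|)\to0$ for \textnormal{(A$_1$)}).

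For \textnormal{(A$_2$)}, however, your strategy has a genuine gap. The condition requires a \emph{fixed} $\beta\in(0,1]$ and $h\in L^1\cap L^\infty$ with $\beta\,\varphi^{-1}(x,t)\leq \varphi^{-1}(y,t)$ for \emph{all} pairs $x,y\in\mathbb{R}^d$ and all $t\in[h(x)+h(y),s]$. With your choice $\theta(x)=C(1+|x|)^{-\gamma}$ one has $\theta(x)+\theta(y)\to0$ as $|x|,|y|\to\infty$, so the admissible range of $t$ extends down to $0$ and $\tau=\varphi^{-1}(x,t)$ is not bounded away from zero. Meanwhile the Lipschitz bound $|p(x,\tau)-p(y,\tau)|\leq c_p|x-y|$ is useless for large $|x-y|$: it only gives the trivial gap $p^+-p^-$, a fixed positive number. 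One is then forced to require $\beta^{p^-}\lesssim \tau^{\,p^+-p^-}$ with $\tau$ arbitrarily small, which is impossible for a fixed $\beta$. In short, \textnormal{(A$_2$)} is a decay-type condition at infinity and cannot be extracted from the purely local Lipschitz data in \eqref{H}(iii); the ``log-H\"older cancellation'' you invoke for \textnormal{(A$_1$)} is unavailable here because $|x-y|$ is unbounded.

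The paper takes a different and much shorter route for \textnormal{(A$_2$)}. Since $t\leq s$ keeps $\tau=\varphi^{-1}(x,t)$ in a bounded range, and on bounded input ranges hypothesis \eqref{H}(ii) makes the exponents constant in the second variable, one obtains $\mathcal{H}(x,\tau)\approx \tau^{p(x)}$ with constants depending only on $s$. The verification of \textnormal{(A$_2$)} is then reduced to the variable-exponent model $\tau^{p(x)}$ and delegated to Lemma~4.2.5 of Harjulehto--H\"ast\"o \cite{Harjulehto-Hasto-2019}. You should follow this reduction rather than attempt a direct argument from \eqref{H}(iii).
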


\begin{proof}
	For conditions \textnormal{(A$_0$)} and \textnormal{(A$_1$)}, the proofs are similar to those done by Bahrouni--Bahrouni--Missaoui--R\u{a}dulescu \cite[Proposition 3.10]{Bahrouni-Bahrouni-Missaoui-Radulescu-2024}. We only need to prove that $\h$ satisfies \textnormal{(A$_2$)}. To this end, for $t\in[0,s]$ and from \eqref{H} (ii) and Proposition \ref{nfct}, we have
	\begin{align*}
		\h (x,t) \approx t^{p(x)},
	\end{align*}
	where "$\approx$" indicates that there exist positive constants $c_1$ and $c_2$ depending on $s$ such that $c_1t^{p^-} \leq \h(x,t) \leq c_2t^{p^-}$ for a.a.\,$x \in \RN$ and for all $0\leq t \leq s$. Hence, the condition \textnormal{(A$_2$)} follows from Lemma 4.2.5 by   Harjulehto--H\"{a}st\"{o} \cite[Lemma 4.2.5]{Harjulehto-Hasto-2019}.
\end{proof}

Thanks to Proposition \ref{nfct}, we are ready to define the  Musielak–Orlicz space $L^{\h}(\RN)$ by
\begin{align*}
	L^{\h}(\RN)=\curly{u\in M(\RN)\colon \vi(u)<+\infty },
\end{align*}
endowed with the following norm
\begin{align*}
	\|u\|_{L^{\h}(\RN)} =\inf \bigg\{ \l >0\colon  \vi\left(\frac{u}{\l}\right) \leq1\bigg\},
\end{align*}
where $\vi$ is defined as in \eqref{Mo}. Proceeding similarly to Subsection \ref{MOSG}, we can introduce the space $\W$ equipped with the norm
\begin{align}\label{nhr}
	\|u\|_{\W}:=\|u\|_{L^{\h}(\RN)}+\|\nabla u\|_{L^{\h}(\RN)}.
\end{align}

\begin{remark}\label{mark}
	According to Proposition \ref{nfct}, the results presented in the previous subsection remain valid with our special \textnormal{N}-function $\h$. This includes Proposition \ref{zoo} with $m = p^-$ and $\ell = q^+$ as well as Lemma \ref{H1} and Proposition \ref{AB}.
\end{remark}

Based on the work by de Albuquerque--de Assis--Carvalho--Salort \cite{deAlbuquerque-Assis-Carvalho-Salort-2023}, we present an adaptation of the classical Br\'{e}zis-Lieb Lemma (see Br\'{e}zis--Lieb \cite{Brezis-Lieb-1983}) for modular functions associated with our function $\h$.

\begin{proposition}[Br\'{e}zis-Lieb Lemma in Musielak-Orlicz Spaces]\label{b-l}
	Assume that \eqref{H} \textnormal{(i)}, \textnormal{(ii)} hold. Let $\{u_n\}_{n \in \N}$ be a sequence in $L^{\h}(\mathbb{R}^d)$ such that $u_n \rightharpoonup u$ weakly in $L^{\h}(\mathbb{R}^d)$. Then,
	\begin{align*}
		\lim_{n \to \infty} \left( \rho_{\h}(u_n) - \rho_{h}(u_n - u) - \rho_{\h}(u) \right) = 0.
	\end{align*}
\end{proposition}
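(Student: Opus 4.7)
The proof follows the classical Brézis--Lieb template, adapted to the Musielak--Orlicz framework. First, since $u_n \rightharpoonup u$ in $L^{\h}(\R^d)$ and this space is reflexive (Proposition \ref{AB}, using the $\Delta_2$-property of $\h$ from Proposition \ref{nfct}), the sequence $\{u_n\}$ is bounded, and by Proposition \ref{zoo}(2) we obtain $\sup_n \rho_\h(u_n) < \infty$ and likewise $\sup_n \rho_\h(u_n - u) < \infty$. Passing to a subsequence if necessary (which suffices, since the statement is about a single limit value and is propagated to the full sequence by a standard subsequence argument), I may further assume $u_n \to u$ almost everywhere in $\R^d$.

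The key step is a pointwise Brézis--Lieb type inequality for $\h$: for every $\varepsilon > 0$ there exists $C_\varepsilon > 0$ such that
\[
\bigl|\h(x, a+b) - \h(x, a) - \h(x, b)\bigr| \leq \varepsilon\, \h(x, a) + C_\varepsilon\, \h(x, b)
\]
for all $a, b \in \R$ and a.a.\,$x \in \R^d$. I would prove this by splitting into two regimes. When $|b| \leq \delta |a|$, the mean-value representation $\h(x, a+b) - \h(x, a) = \int_0^1 h(x, a + sb)\, b\, ds$ together with the monotonicity of $h(x, \cdot)$ and the bound $h(x,t) t \leq q^+ \h(x,t)$ from \eqref{l22} produces a contribution of order $\delta \cdot \h(x,a)$, which accounts for the $\varepsilon$ term once $\delta$ is chosen small. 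When $|b| > \delta |a|$, the $\Delta_2$-property of $\h$ and monotonicity force $\h(x, a+b), \h(x, a) \leq C_\delta\, \h(x, b)$, supplying the $C_\varepsilon$ term.

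With this inequality, set $a = u_n - u$ and $b = u$ and define
\[
f_{n,\varepsilon}(x) := \bigl(\bigl|\h(x, u_n) - \h(x, u_n - u) - \h(x, u)\bigr| - \varepsilon\, \h(x, u_n - u)\bigr)^+.
\]
Then $0 \leq f_{n,\varepsilon}(x) \leq C_\varepsilon\, \h(x, u)$, and $\h(\cdot, u) \in L^1(\R^d)$ by Proposition \ref{zoo}(2). Since $u_n \to u$ almost everywhere, $f_{n,\varepsilon} \to 0$ pointwise a.e., so Lebesgue's dominated convergence theorem gives $\int_{\R^d} f_{n,\varepsilon}\, dx \to 0$ as $n \to \infty$. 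From the definition of $f_{n,\varepsilon}$ and the triangle inequality,
\[
\bigl|\rho_\h(u_n) - \rho_\h(u_n - u) - \rho_\h(u)\bigr| \leq \int_{\R^d} f_{n,\varepsilon}\, dx + \varepsilon \sup_{n \in \N} \rho_\h(u_n - u),
\]
and letting first $n \to \infty$ and then $\varepsilon \to 0$ delivers the claim.

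The main technical obstacle is the pointwise inequality in the second paragraph: because $\h$ has no explicit form and its variable exponents depend both on $x$ and on the magnitude of the argument, the argument must rely exclusively on the abstract bounds in \eqref{l22} and the $\Delta_2$-condition from Proposition \ref{nfct}, rather than on the direct computations available in the constant-exponent or classical double-phase setting. A secondary but standard subtlety is the extraction of an a.e.\,convergent subsequence from weak convergence alone, which I handle via the ``every subsequence has a sub-subsequence'' principle already mentioned.
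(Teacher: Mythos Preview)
Your pointwise Brézis--Lieb inequality and the dominated-convergence step are correct and match the standard argument. The genuine gap is earlier: weak convergence in $L^{\h}(\R^d)$ does \emph{not} imply almost-everywhere convergence on any subsequence, so the extraction in your first paragraph is impossible. The Rademacher functions $r_n$ on $[0,1]$ converge weakly to $0$ in every $L^p$, $1<p<\infty$, yet have no a.e.\,convergent subsequence; the ``every subsequence has a sub-subsequence'' principle therefore has nothing to grab. Calling this ``secondary but standard'' underestimates it --- it is the only place the argument fails, and it cannot be repaired from the stated hypothesis alone. Indeed, with $u_n=1+r_n$ in $L^4([0,1])$ one has $u_n\rightharpoonup 1$ while $\int|u_n|^4=8$, $\int|u_n-1|^4=1$, $\int 1^4=1$, so the Brézis--Lieb difference equals $6$ for every $n$; this already shows the claim fails under weak convergence alone in the simplest Orlicz setting.

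The paper itself offers no proof but refers to \cite{deAlbuquerque-Assis-Carvalho-Salort-2023}; in that reference, as in every Brézis--Lieb lemma for (Musielak--)Orlicz spaces, the hypothesis is boundedness together with $u_n\to u$ a.e.\,(or in measure), not merely weak convergence. The proposition as stated here is almost certainly missing that assumption, and at its single point of use (the proof of Theorem~\ref{Inj2}) pointwise convergence is in fact available from the compact local embedding invoked there. With $u_n\to u$ a.e.\,added to the hypotheses, your proof is complete and is exactly the argument in the cited source.
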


\begin{lemma} \label{Aux1}
	Let \eqref{H} \textnormal{(i)}, \textnormal{(ii)} be satisfied and $B \subset \mathbb{R}^d$ be a measurable set with $\left\vert B \right|<\infty$. Then, there exist $C_1,\, C_2>0 $ such that
	\begin{align*}
		C_1 \min \left\{ |B|^{\frac{1}{p^-}}, \ |B|^{\frac{1}{q^+}} \right\} \leq \|\chi_B\|_{\Lh} \leq C_2 \max \left\{ |B|^{\frac{1}{p^-}}, \ |B|^{\frac{1}{q^+}} \right\}.
	\end{align*}
\end{lemma}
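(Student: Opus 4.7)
The plan is to combine the modular--norm relations of Proposition~\ref{zoo}(2) with a uniform two-sided bound on $\mathcal{H}(x,1)$, and then to isolate $N := \|\chi_B\|_{L^{\mathcal{H}}}$ by a case split on whether $N \leq 1$ or $N > 1$.

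The first key task is to show that there are constants $c_1, c_2 > 0$ such that $c_1 \leq \mathcal{H}(x,1) \leq c_2$ for all $x \in \mathbb{R}^d$. Two routes are available. Hypothesis \eqref{H}(ii) makes the exponents constant on $[0,1]$, so that the defining integral collapses to $\mathcal{H}(x,1) = \tfrac{1}{p(x)} + \tfrac{\mu(x)}{q(x)}$, and boundedness of $p$, $q$, $\mu$ gives the claim. More robustly, condition $(\mathrm{A}_0)$ proven in Proposition~\ref{Prop1} reads $\beta \leq \mathcal{H}^{-1}(x,1) \leq 1/\beta$, which is equivalent to $\mathcal{H}(x,\beta) \leq 1 \leq \mathcal{H}(x,1/\beta)$; applying the scaling in Proposition~\ref{zoo}(1) at $t=1$ to both sides and using $p^- \leq q^+$ yields the uniform bound $\beta^{q^+} \leq \mathcal{H}(x,1) \leq \beta^{-q^+}$.

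With this in hand, integrating gives
\begin{align*}
c_1 |B| \leq \rho_{\mathcal{H}}(\chi_B) = \int_B \mathcal{H}(x,1)\,\mathrm{d}x \leq c_2 |B|.
\end{align*}
Applying Proposition~\ref{zoo}(2) with $m=p^-$ and $\ell=q^+$ (justified by Remark~\ref{mark}) produces
\begin{align*}
\min\{N^{p^-}, N^{q^+}\} \leq \rho_{\mathcal{H}}(\chi_B) \leq \max\{N^{p^-}, N^{q^+}\},
\end{align*}
so chaining the two displays yields $\min\{N^{p^-}, N^{q^+}\} \leq c_2 |B|$ and $\max\{N^{p^-}, N^{q^+}\} \geq c_1 |B|$. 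A case split on $N \lessgtr 1$ (which resolves which of $N^{p^-}$ and $N^{q^+}$ is the minimum and which is the maximum) then gives
\begin{align*}
N \leq \max\{(c_2|B|)^{1/p^-}, (c_2|B|)^{1/q^+}\} \quad \text{and} \quad N \geq \min\{(c_1|B|)^{1/p^-}, (c_1|B|)^{1/q^+}\}.
\end{align*}
Factoring the constants out of the max/min yields the lemma with $C_1 := \min\{c_1^{1/p^-}, c_1^{1/q^+}\}$ and $C_2 := \max\{c_2^{1/p^-}, c_2^{1/q^+}\}$.

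The only real obstacle is the first task---establishing a uniform two-sided bound on $\mathcal{H}(x,1)$ without access to an explicit formula for $\mathcal{H}$---and the cleanest workaround is to pull the bound from $(\mathrm{A}_0)$ via Proposition~\ref{Prop1}; the remaining manipulations are routine algebra on the modular inequalities.
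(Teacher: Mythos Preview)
Your proof is correct and follows essentially the same approach as the paper: bound $\rho_{\mathcal{H}}(\chi_B) = \int_B \mathcal{H}(x,1)\,\mathrm{d}x$ uniformly above and below, then feed this into the modular--norm relations of Proposition~\ref{zoo}(2). The paper uses precisely your first route---the explicit formula $\mathcal{H}(x,1) = \tfrac{1}{p(x)} + \tfrac{\mu(x)}{q(x)}$ coming from \eqref{H}(ii); note that your alternative route via Proposition~\ref{Prop1} would, as stated, require the additional hypotheses \eqref{H}(iii),(iv), so under the lemma's assumptions the direct computation is the one that actually goes through.
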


\begin{proof}
	Using Proposition \ref{zoo} and Remark \ref{mark}, we infer that
	\begin{equation}\label{2eq30}
		\min \left\lbrace \left\Vert \chi_{_B} \right\Vert_{L^{\h}(\mathbb{R}^d)}^{p^-}, \left\Vert \chi_{_B} \right\Vert_{L^{\h}(\mathbb{R}^d)}^{q^+} \right\rbrace \leq \int_{\mathbb{R}^d} \h(x, \chi_{_B}(x)) \,\mathrm{d}x \leq \max \left\lbrace \left\Vert \chi_{_B} \right\Vert_{L^{\h}(\mathbb{R}^d)}^{p^-}, \left\Vert \chi_{_B} \right\Vert_{L^{\h}(\mathbb{R}^d)}^{q^+} \right\rbrace.
	\end{equation}
	Furthermore, using assumption \eqref{H} (i), we find that
	\begin{equation}\label{2eq31}
		\frac{1}{p^-} \left| B \right| \leq \int_{\mathbb{R}^d} \h(x, \chi_{_B}(t)) \,\mathrm{d}x = \int_{B} \h(x, 1) \,\mathrm{d}x \leq \frac{1}{p^-} (1 + \left|\mu\right|_{L^\infty(\mathbb{R}^d)}) \left| B \right|.
	\end{equation}
	By combining \eqref{2eq30} and \eqref{2eq31}, we deduce that
	\begin{align*}
		\min \left\lbrace \left( C_1 \left| B \right| \right)^{\frac{1}{p^-}}, \left( C_1 \left| B \right| \right)^{\frac{1}{q^+}} \right\rbrace \leq \left\Vert \chi_{_B} \right\Vert_{L^{\h}(\mathbb{R}^d)} \leq \max \left\lbrace \left( C_2 \left| B \right| \right)^{\frac{1}{p^-}}, \left( C_2 \left| B \right| \right)^{\frac{1}{q^+}} \right\rbrace,
	\end{align*}
	where $ C_1 = \frac{1}{p^-} $ and $ C_2 = \frac{1}{p^-} (1 + \|\mu\|_{\infty}) $. This concludes the proof.
\end{proof}

\begin{definition}\label{csh}$~$
	\begin{enumerate}
		\item[\textnormal{(i)}]
			The Sobolev conjugate of $\mathcal{H}$ is defined as the generalized \textnormal{N}-function $\mathcal{H}_\ast$ given by
			\begin{align*}
				\mathcal{H}_\ast(x, t):= \overline{\mathcal{H}}\left(x, \mathcal{N}^{-1}(x, t)\right) \quad \text{for } x \in \RN \text { and } t \geq 0,
			\end{align*}
			where $\mathcal{N}\colon \RN \times[0, +\infty) \rightarrow[0,+ \infty)$ is the function defined by
			\begin{align}\label{eqN}
				\mathcal{N}(x, t):=\left(\int_0^t\left(\frac{\tau}{\overline{\mathcal{H}}(x, \tau)}\right)^{\frac{1}{d-1}} d \tau\right)^{\frac{1}{d^{\prime}}}\quad\text{for } x \in \RN \text { and } t \geq 0,
			\end{align}
			with $d':=\frac{d}{d-1}$ and
			\begin{align*}
				\overline{\mathcal{H}}(x,t):=
				\begin{cases}
					2\max\left\{ \h\left(x, \h^{-1}(x,1)t\right), 2t-1 \right\}-1 & \text{if } t \geq 1\\[1ex]
					\ds\limsup_{\abs{x}\rightarrow +\infty} \max\left\{ \h\left(x, \h^{-1}(x,1)t\right), 2t-1 \right\} & \text{if } 0\leq t <1,
				\end{cases}
			\end{align*}
			for all $x \in \RN$.
		\item[\textnormal{(ii)}]
			Assume that  $\varphi$ is a generalized \textnormal{N}-function. The homogeneous Musielak-Orlicz Sobolev space $V^{1, \varphi}(\O)$ is defined as
			\begin{align*}
				V^{1, \varphi}(\RN)=\left\{u \in W_{\operatorname{loc}}^{1,1}(\RN)\colon |\nabla u| \in L^{\varphi}(\RN)\right\}.
			\end{align*}
			Let $\O$ be a bounded open set such that $\overline{\O} \subset \RN$, then
			\begin{align*}
				\|u\|_{L^1(\O)}+\|\nabla u\|_{L^{\varphi}(\RN)}
			\end{align*}
			defines a norm on $V^{1, \varphi}(\RN)$.
		\item[\textnormal{(iii)}]
			We also define the space
			\begin{align*}
				V^{1, \varphi}_{\textrm{d}}(\RN)=\curly{u \in V^{1, \varphi}(\RN)\colon \abs {\curly{| u|>t}}<\infty \text{ for every } t > 0},
			\end{align*}
			where $| \cdot|$ is the Lebesgue measure and it can equipped with the norm
			\begin{align*}
				\norm{\nabla u}_{L^\varphi(\RN)}.
			\end{align*}
	\end{enumerate}
\end{definition}

\begin{remark}\label{RKN}
	By Propositions \ref{nfct} and \ref{Prop1}, we know that $\h$ satisfies the $\Delta_2$-condition and \textnormal{(A$_0$)}. Therefore, invoking the recent work by Cianchi--Diening \cite{Cianchi-Diening-2024}, the function $\overline{\mathcal{H}}$ is equivalent, in the sense of the relation $\approx$, to the function
	\begin{align*}
		\h^\circ(x,t):=
		\begin{cases}
			\h(x,t) & \text{if } t \geq 1\\[1ex]
			\h_\infty(t):= \ds \limsup_{\abs{x}\rightarrow +\infty} \h(x,t)=:\frac{t^{p_\infty}}{p_\infty}+\frac{\mu_\infty}{q_\infty}t^{q_\infty} & \text{if } 0\leq t <1,
		\end{cases}
	\end{align*}
	for all $x \in \RN$, where $p_\infty$, $q_\infty$ and $\mu_\infty$ represent the supremum limits of $p(x)$, $q(x)$, and $\mu(x)$, respectively, as $|x| \rightarrow +\infty$. Thus, by \cite[Remark 3.2]{Cianchi-Diening-2024}, we can replace $ \overline{\mathcal{H}} $ with $ \h^\circ $ in definition of $\h_\ast$. This equivalence and substitution are crucial for simplifying our subsequent analysis and ensuring consistency within the framework of our definitions and propositions.
\end{remark}

\begin{proposition}\label{PRPN}
	Assume hypotheses \eqref{H}\textnormal{(i)} and \textnormal{(ii)}. Then the following hold:
	\begin{enumerate}
		\item[\textnormal{(1)}]
			$\min \left\lbrace \xi^{p^-}, \xi^{q^+} \right\rbrace \h^\circ (x,t) \leq \h^\circ (x, \xi t) \leq \max \left\lbrace \xi^{p^-}, \xi^{q^+} \right\rbrace \h^\circ (x,t)$.
		\item[\textnormal{(2)}]
			$p^{-} \leq \ds\frac{ h^\circ(x, t)t}{\mathcal{H}^\circ(x, t)} \leq q^+$ for all $t > 0$ and for a.a.\,$x \in \mathbb{R}^d$, where
			\begin{align*}
				h^\circ(x,t) =
				\begin{cases}
					h(x,t), & \text{for } t \geq 1, \\
					\h'_\infty (t), & \text{if } 0 \leq t < 1.
				\end{cases}
			\end{align*}
	\end{enumerate}
\end{proposition}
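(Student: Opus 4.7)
The plan is to establish both assertions by a direct case analysis following the piecewise definition of $\mathcal{H}^\circ$ and $h^\circ$. I would prove (2) first and then deduce (1) from it via the standard monotonicity argument on $t \mapsto \mathcal{H}^\circ(x,t)/t^r$.

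For assertion (2), on the region $t\geq 1$ we have $\mathcal{H}^\circ(x,t)=\mathcal{H}(x,t)$ and $h^\circ(x,t)=h(x,t)$, so the bound is immediate from Proposition \ref{nfct}(ii). On the region $0<t<1$, we use that $\mathcal{H}^\circ(x,t)=\mathcal{H}_\infty(t)=\tfrac{t^{p_\infty}}{p_\infty}+\tfrac{\mu_\infty}{q_\infty}t^{q_\infty}$, so $h^\circ(x,t)=\mathcal{H}_\infty'(t)=t^{p_\infty-1}+\mu_\infty t^{q_\infty-1}$. A direct computation gives
\[
\frac{h^\circ(x,t)t}{\mathcal{H}^\circ(x,t)}=\frac{t^{p_\infty}+\mu_\infty t^{q_\infty}}{\tfrac{t^{p_\infty}}{p_\infty}+\tfrac{\mu_\infty}{q_\infty}t^{q_\infty}}.
\]
Setting $a=t^{p_\infty}\geq 0$ and $b=\mu_\infty t^{q_\infty}\geq 0$, this ratio is $(a+b)/(a/p_\infty+b/q_\infty)$; since $p_\infty\leq q_\infty$ it is a weighted harmonic-type mean that lies in $[p_\infty,q_\infty]$. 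Hypothesis \eqref{H}(i) gives $p^-\leq p_\infty\leq q_\infty\leq q^+$, finishing (2).

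For assertion (1), I would argue that (2) implies the monotonicities
\[
t\longmapsto \frac{\mathcal{H}^\circ(x,t)}{t^{p^-}}\ \text{is nondecreasing}, \qquad t\longmapsto \frac{\mathcal{H}^\circ(x,t)}{t^{q^+}}\ \text{is nonincreasing},
\]
on each of the intervals $(0,1)$ and $[1,\infty)$, by differentiating $\log$ of either ratio and using the bound from (2). When $t$ and $\xi t$ lie in the same interval these two monotonicities directly yield the inequalities in (1) by the classical argument (distinguishing $\xi\geq 1$ and $\xi<1$). When $t$ and $\xi t$ straddle the point $s=1$, I would bridge by passing through $s=1$: apply the monotonicity on each side to get estimates in terms of $\mathcal{H}^\circ(x,1)$, then concatenate. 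One must verify that the possible discontinuity of $\mathcal{H}^\circ$ at $s=1$ is compatible with the global monotonicity; this is ensured by comparing the one-sided limits using the explicit formulae for $\mathcal{H}(x,1)$ and $\mathcal{H}_\infty(1)$ together with $p^-\leq p_\infty,q_\infty\leq q^+$ and the boundedness of $\mu$.

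The main obstacle is precisely this transition across $t=1$: although each piece individually behaves like a uniformly $(p^-,q^+)$-regular N-function, the piecewise construction is not a priori continuous, and care is required to confirm that the constants $p^-$ and $q^+$ work globally after concatenation. All other steps reduce to elementary calculus on explicit polynomial expressions and applications of Proposition \ref{nfct}.
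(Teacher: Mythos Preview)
Your approach is correct and shares the paper's case analysis according to the piecewise definition of $\mathcal{H}^\circ$, but the execution differs. For the region $0\le t\le 1$ the paper introduces the global generalized \textnormal{N}-function $G(x,t)=t^{p_\infty}/p_\infty+(\mu_\infty/q_\infty)\,t^{q_\infty}$, observes that $G$ satisfies \eqref{D22} with $m=p^-$ and $\ell=q^+$, and then invokes Propositions~\ref{zoo} and~\ref{nfct} for $G$; since $\mathcal{H}_\infty=G|_{[0,1]}$, this yields both (1) and (2) simultaneously on that piece without your explicit harmonic-mean computation for (2) or the subsequent monotonicity derivation of (1) from (2). For $t\ge 1$ both proofs simply appeal to the same propositions applied to $\mathcal{H}$ itself. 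The paper's route is shorter because it packages both assertions into a single appeal to the abstract results, whereas your route makes the constants more transparent. Your concern about the cross-regime case (when $t$ and $\xi t$ lie on opposite sides of $1$) is legitimate and is not spelled out in the paper's proof either; the bridging argument through $s=1$ that you outline is therefore a genuine addition rather than an unnecessary detour.
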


\begin{proof}
	Clearly, $\h^\circ(x,t)=\h(x,t)$ for a.a.\,$x \in \RN$ and for all $t \geq 1$. Therefore, for $t\geq 1,$ conditions (1) and (2) follow from Propositions \ref{zoo} and \ref{nfct}. For $0 \leq t \leq 1$, consider the function $G\colon \RN \times \R \rightarrow \R$ defined by
	\begin{align*}
		G(x,t)=\frac{t^{p_\infty}}{p_\infty}+\frac{\mu_\infty}{q_\infty}t^{q_\infty}.
	\end{align*}
	Note that $G$ is a generalized \textnormal{N}-function that satisfies \eqref{D22} with $m=p^-$ and $\ell=q^+$, and therefore satisfies the conditions of Propositions \ref{zoo} and \ref{nfct}. Thus, since $\ds\h_\infty = G_{|_{[0,1]}}$ for $0 \leq t \leq 1$, we can conclude the proof.
\end{proof}

Now, we require the following two lemmas.

\begin{lemma}\label{lemR2}
	Under the assumptions \eqref{H}\textnormal{(i)}, \textnormal{(ii)}, the following inequality holds:
	\begin{align*}
		\min\left\lbrace t^{p^-_{*}}, t^{q^+_{*}} \right\rbrace \h_*(x,\xi) \leq \h_*(x,t\xi) \leq \max\left\lbrace t^{p^-_{*}}, t^{q^+_{*}} \right\rbrace \h_*(x,\xi), \quad \text{for all } t, \xi \geq 0,
	\end{align*}
	where $\displaystyle{p_*^- := \frac{dp^-}{d - p^-}}$ and $\displaystyle{q_*^+ := \frac{dq^+}{d - q^+}}$.
\end{lemma}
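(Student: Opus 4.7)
The plan is to reduce the claim to an application of Proposition \ref{zoo}(1) with $\h_*$ in place of $\h$ and exponents $(m, \ell) = (p^-_*, q^+_*)$. To invoke that proposition I must verify the two-sided growth bound
\begin{equation*}
p^-_* \leq \frac{h_*(x,t)\, t}{\h_*(x,t)} \leq q^+_*, \qquad h_*(x, t) := \partial_t \h_*(x, t),
\end{equation*}
for a.a.\,$x \in \RN$ and all $t > 0$; once this is in hand, Proposition \ref{zoo}(1) applied to $\h_*$ delivers exactly the stated scaling inequalities. By Remark \ref{RKN} I can replace $\overline{\h}$ by the explicit function $\h^\circ$ in the definitions of $\mathcal{N}$ and $\h_*$ without affecting the argument.

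To obtain the growth bound, I set $s := \mathcal{N}^{-1}(x, t)$, so that $t = \mathcal{N}(x, s)$ and $\h_*(x, t) = \h^\circ(x, s)$. Differentiating \eqref{eqN} and applying the chain rule yields the factorization
\begin{equation*}
\frac{h_*(x,t)\, t}{\h_*(x,t)} = \frac{h^\circ(x, s)\, s}{\h^\circ(x, s)} \cdot R(x, s), \quad R(x, s) := \frac{d'}{s} \left(\frac{\h^\circ(x, s)}{s}\right)^{\frac{1}{d-1}} \int_0^s \left(\frac{\tau}{\h^\circ(x, \tau)}\right)^{\frac{1}{d-1}} d\tau.
\end{equation*}
The first factor lies in $[p^-, q^+]$ by Proposition \ref{PRPN}(2), which applies both for $s \geq 1$ (where $\h^\circ = \h$) and for $0 \leq s < 1$ (where $\h^\circ = \h_\infty$, whose exponents $p_\infty, q_\infty$ satisfy $p^- \leq p_\infty$ and $q_\infty \leq q^+$).

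For the factor $R(x,s)$, the growth bound on $\h^\circ$ implies that $\tau \mapsto \h^\circ(x, \tau)/\tau^{p^-}$ is non-decreasing and $\tau \mapsto \h^\circ(x, \tau)/\tau^{q^+}$ is non-increasing, so for $0 \leq \tau \leq s$ one has $\h^\circ(x, s)(\tau/s)^{q^+} \leq \h^\circ(x, \tau) \leq \h^\circ(x, s)(\tau/s)^{p^-}$. Substituting these two pointwise bounds for the integrand in $R(x,s)$ reduces the problem to the elementary integrals $\int_0^s \tau^{(1-p^-)/(d-1)}\, d\tau$ and $\int_0^s \tau^{(1-q^+)/(d-1)}\, d\tau$, both of which converge at $\tau = 0$ thanks to $p^-, q^+ < d$. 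Carrying out the computation, the $s$-powers cancel exactly and I obtain $\tfrac{d}{d - p^-} \leq R(x, s) \leq \tfrac{d}{d - q^+}$. Multiplying the two factor bounds then gives $p^-_* = p^- \cdot \tfrac{d}{d-p^-} \leq \frac{h_*(x,t)t}{\h_*(x,t)} \leq q^+ \cdot \tfrac{d}{d-q^+} = q^+_*$, and Proposition \ref{zoo}(1) concludes the proof.

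The main technical obstacle is the chain-rule bookkeeping leading to the clean factorization above and the exact cancellation of $s$-powers in the bounds on $R(x,s)$; a secondary subtlety is justifying, via Remark \ref{RKN}, the substitution of $\h^\circ$ for $\overline{\h}$ in the definitions of $\mathcal{N}$ and $\h_*$ without introducing extraneous multiplicative constants that would degrade the exponents $p^-_*$ and $q^+_*$ into something strictly worse.
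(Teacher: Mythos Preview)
Your proof is correct, but it takes a genuinely different route from the paper's. The paper argues by scaling first: it substitutes $s \mapsto ts$ in the defining integral for $\mathcal{N}$, uses Proposition~\ref{PRPN}(1) to bound $\h^\circ(x,ts)$ between $t^{p^-}\h^\circ(x,s)$ and $t^{q^+}\h^\circ(x,s)$, obtains the two-sided bound $\zeta_0(t)\mathcal{N}(x,\xi)\le\mathcal{N}(x,t\xi)\le\zeta_1(t)\mathcal{N}(x,\xi)$, inverts it to bound $\mathcal{N}^{-1}(x,t\xi)$, and finally applies Proposition~\ref{PRPN}(1) once more to $\h_*=\h^\circ\circ\mathcal{N}^{-1}$. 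In particular, the paper proves Lemma~\ref{lemR2} without ever differentiating, and only afterward derives the differential growth bound (your displayed inequality $p^-_*\le h_*(x,t)t/\h_*(x,t)\le q^+_*$) as Lemma~\ref{lemR3}(i), by dividing the scaling inequality by $t-1$ and sending $t\to1$.

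You run the logic in the opposite direction: you establish the differential growth bound first via the chain-rule factorization into $\dfrac{h^\circ(x,s)s}{\h^\circ(x,s)}\cdot R(x,s)$, bound each factor using Proposition~\ref{PRPN}, and then quote Proposition~\ref{zoo}(1). Both approaches rest on the same ingredient (the $[p^-,q^+]$ growth of $\h^\circ$), and your integral estimate for $R(x,s)$ is exactly the paper's substitution argument viewed infinitesimally. Your route is conceptually tidy in that it reduces everything to verifying \eqref{D22} for $\h_*$ and invoking the general machinery; the paper's route avoids any differentiation of $\h^\circ$ or $\mathcal{N}$ and is therefore marginally more robust to the piecewise definition of $\h^\circ$ at $t=1$. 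The concern you flag about Remark~\ref{RKN} introducing constants is shared by the paper's own proof, which makes the same substitution; in the paper's convention $\h_*$ is simply taken to be built from $\h^\circ$, so no degradation occurs.
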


\begin{proof}
	By the definition of $\mathcal{N}$ (see \eqref{eqN} and Remark \ref{RKN}), for all $t > 0$ and $\xi \geq 0$, we have
	\begin{align*}
		\mathcal{N}(x,t\xi) = \left( \int_0^{t\xi} \left( \frac{s}{\h^\circ(x,s)} \right)^{\frac{1}{d-1}} \,\mathrm{d}s \right)^{\frac{d-1}{d}} = t^{\frac{d-1}{d}} \left( \int_0^\xi \left( \frac{ts}{\h^\circ(x,ts)} \right)^{\frac{1}{d-1}} \,\mathrm{d}s \right)^{\frac{d-1}{d}}.
	\end{align*}
	Using Proposition \ref{PRPN}-(1), for all $0 < t \leq 1$ and $\xi \geq 0$, we get
	\begin{align*}
		\mathcal{N}(x,t\xi)
		&\leq t^{\frac{d-1}{d}} \left( \int_0^\xi \left( \frac{ts}{t^{q^+} \h^\circ(x,s)} \right)^{\frac{1}{d-1}} \,\mathrm{d}s \right)^{\frac{d-1}{d}} = t^{\left( \frac{d-1}{d} - \frac{ q^+ - 1}{d} \right)} \left( \int_0^\xi \left( \frac{s}{\h^\circ(x,s)} \right)^{\frac{1}{d-1}} \,\mathrm{d}s \right)^{\frac{d-1}{d}} \\
		& = t^{\frac{d - q^+}{d}} \mathcal{N}(x,\xi)
	\end{align*}
	and
	\begin{align*}
		\mathcal{N}(x,t\xi)
		&\geq t^{\frac{d-1}{d}} \left( \int_0^\xi \left( \frac{ts}{t^{p^-} \h^\circ(x,s)} \right)^{\frac{1}{d-1}} \,\mathrm{d}s \right)^{\frac{d-1}{d}} = t^{\left( \frac{d-1}{d} - \frac{ p^- - 1}{d} \right)} \left( \int_0^\xi \left( \frac{s}{\h^\circ(x,s)} \right)^{\frac{1}{d-1}} \,\mathrm{d}s \right)^{\frac{d-1}{d}} \\
		& = t^{\frac{d - p^-}{d}} \mathcal{N}(x,\xi).
	\end{align*}
	Thus,
	\begin{align*}
		t^{\frac{d - p^-}{d}} \mathcal{N}(x,\xi) \leq \mathcal{N}(x,t\xi) \leq t^{\frac{d - q^+}{d}} \mathcal{N}(x,\xi), \quad \text{for all } 0 \leq t \leq 1 \text{ and all } \xi \geq 0.
	\end{align*}
	Similarly, we have
	\begin{align*}
		t^{\frac{d - q^+}{d}} \mathcal{N}(x,\xi) \leq \mathcal{N}(x,t\xi) \leq t^{\frac{d - p^-}{d}} \mathcal{N}(x,\xi), \quad \text{for all } t > 1 \text{ and all } \xi \geq 0.
	\end{align*}
	Combining these results, we obtain
	\begin{equation}\label{aml1}
		\zeta_0(t) \mathcal{N}(x,\xi) \leq \mathcal{N}(x,t\xi) \leq \zeta_1(t) \mathcal{N}(x,\xi), \quad \text{for all } t, \xi \geq 0,
	\end{equation}
	where
	\begin{align*}
		\zeta_0(t) = \min\left\lbrace t^{\frac{d - p^-}{d}}, t^{\frac{d - q^+}{d}} \right\rbrace \quad \text{and} \quad \zeta_1(t) = \max\left\lbrace t^{\frac{d - p^-}{d}}, t^{\frac{d - q^+}{d}} \right\rbrace.
	\end{align*}
	Therefore, inserting $\tau = \mathcal{N}(x,\xi)$ and $\kappa = \zeta_0(t)$ into the inequality \eqref{aml1}, i.e., $\xi = \mathcal{N}^{-1}(x,\tau)$ and $t = \zeta_0^{-1}(\kappa)$, we get
	\begin{align*}
		\kappa \tau \leq \mathcal{N}(x,\zeta_0^{-1}(\kappa)\mathcal{N}^{-1}(x,\tau)).
	\end{align*}
	Since $\mathcal{N}^{-1}$ is non-decreasing, we infer that
	\begin{align*}
		\mathcal{N}^{-1}(x,\kappa \tau) \leq \zeta_0^{-1}(\kappa) \mathcal{N}^{-1}(x,\tau), \quad \text{for all } \kappa, \tau > 0.
	\end{align*}
	Similarly, putting $\tau = \mathcal{N}(x,\xi)$ and $\kappa = \zeta_1(t)$ into the inequality \eqref{aml1}, we obtain
	\begin{align*}
		\zeta_1^{-1}(\kappa) \mathcal{N}^{-1}(x,\tau) \leq \mathcal{N}^{-1}(x,\kappa \tau), \quad \text{for all } \kappa, \tau > 0.
	\end{align*}
	From these results, it follows that
	\begin{align*}
		\min\left\lbrace t^{\frac{d}{d - p^-}}, t^{\frac{d}{d - q^+}} \right\rbrace \mathcal{N}^{-1}(x,\xi) \leq \mathcal{N}^{-1}(x,t\xi) \leq \max\left\lbrace t^{\frac{d}{d - p^-}}, t^{\frac{d}{d - q^+}} \right\rbrace \mathcal{N}^{-1}(x,\xi), \quad \text{for all } t, \xi \geq 0.
	\end{align*}
	It follows, from Proposition \ref{PRPN}-(1), that
	\begin{align*}
		\min\left\lbrace t^{p^-_{*}}, t^{q^+_{*}} \right\rbrace \h_*(x,\xi) \leq \h_*(x,t\xi) \leq \max\left\lbrace t^{p^-_{*}}, t^{q^+_{*}} \right\rbrace \h_*(x,\xi), \quad \text{for all } t, \xi \geq 0,
	\end{align*}
	where $\displaystyle{p_*^- = \frac{dp^-}{d - p^-}}$ and $\displaystyle{q_*^+ = \frac{dq^+}{d - q^+}}$. This completes the proof.
\end{proof}

\begin{lemma}\label{lemR3}
	Let hypotheses \eqref{H}\textnormal{(i)}, \textnormal{(ii)}, be satisfied.
	\begin{enumerate}
		\item[\textnormal{(i)}]
			It holds
			\begin{align*}
				p_*^- \leq \frac{h_*(x,t) t}{\h_*(x,t)} \leq q_*^+, \quad \text{for all } t > 0,
			\end{align*}
			where $\h_*(x,t) = \ds\int_0^t h_*(x,s) \,\mathrm{d}s$.
		\item[\textnormal{(ii)}]
			The function $\h_*$ satisfies the $\Delta_2$-condition and it holds
			\begin{align*}
				\min\left\lbrace \Vert u \Vert_{L^{\h_*}(\mathbb{R}^d)}^{p^-_*}, \Vert u \Vert_{L^{\h_*}(\mathbb{R}^d)}^{q^+_*} \right\rbrace \leq \int_{\RN} \h_*(x,u)\,\mathrm{d}x \leq \max\left\lbrace \Vert u \Vert_{L^{\h_*}(\mathbb{R}^d)}^{p^-_*}, \Vert u \Vert_{L^{\h_*}(\mathbb{R}^d)}^{q^+_*} \right\rbrace,
			\end{align*}
			for all $u \in L^{\h_*}(\RN)$.
	\end{enumerate}
\end{lemma}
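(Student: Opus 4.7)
My plan is to deduce both statements from the scaling estimates of Lemma \ref{lemR2}, using a right-derivative argument at $\lambda=1$ for part (i) and an appeal to Proposition \ref{zoo} for part (ii). First, observe that $\h_\ast(x,\cdot)$ is a genuine generalized N-function: since $\overline{\h}$ is such a function and $\mathcal{N}(x,\cdot)$ is continuous, strictly increasing on $[0,\infty)$ with $\mathcal{N}(x,0)=0$ and $\mathcal{N}(x,t)\to\infty$ as $t\to\infty$, its inverse $\mathcal{N}^{-1}(x,\cdot)$ enjoys the same properties, and the composition $\h_\ast(x,t) = \overline{\h}(x,\mathcal{N}^{-1}(x,t))$ is convex, vanishes at $0$, and satisfies the remaining normalizations of Definition \ref{dfnfct}. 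In particular, $\h_\ast(x,\cdot)$ admits a nondecreasing right-derivative, which we identify with the $h_\ast(x,\cdot)$ in the representation $\h_\ast(x,t) = \int_0^t h_\ast(x,s)\,\mathrm{d}s$.

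To prove (i), fix $x \in \RN$ and $t > 0$. Since $p^-_\ast \leq q^+_\ast$, for $\lambda > 1$ we have $\min\{\lambda^{p^-_\ast},\lambda^{q^+_\ast}\}=\lambda^{p^-_\ast}$ and $\max\{\lambda^{p^-_\ast},\lambda^{q^+_\ast}\}=\lambda^{q^+_\ast}$, so Lemma \ref{lemR2} applied with $\xi = t$ yields
\begin{align*}
\frac{\lambda^{p^-_\ast}-1}{\lambda-1}\,\frac{\h_\ast(x,t)}{t} \leq \frac{\h_\ast(x,\lambda t)-\h_\ast(x,t)}{\lambda t - t} \leq \frac{\lambda^{q^+_\ast}-1}{\lambda-1}\,\frac{\h_\ast(x,t)}{t}.
\end{align*}
Passing to the limit $\lambda \to 1^+$ and using the right-differentiability noted above delivers $p^-_\ast\,\h_\ast(x,t) \leq h_\ast(x,t)\,t \leq q^+_\ast\,\h_\ast(x,t)$, as claimed.

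For (ii), the $\Delta_2$-condition is immediate from Lemma \ref{lemR2} with $t = 2$, which gives $\h_\ast(x,2\xi) \leq 2^{q^+_\ast}\h_\ast(x,\xi)$ for every $\xi \geq 0$. Together with part (i), this shows that $\h_\ast$ satisfies condition \eqref{D22} with $m = p^-_\ast$ and $\ell = q^+_\ast$, and the norm-modular comparison is then a direct application of Proposition \ref{zoo}(2) to $\h_\ast$. I do not anticipate any serious obstacle here, since the bulk of the technical work was already carried out in Lemma \ref{lemR2} and the remaining argument reduces that lemma to its standard pointwise counterpart by differentiation at $\lambda=1$; the only point requiring a moment of care is the clean identification of $h_\ast$ with the right-derivative of $\h_\ast$, which is ensured by the composition structure recorded in Definition \ref{csh}(i).
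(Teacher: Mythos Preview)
Your proposal is correct and follows essentially the same approach as the paper: both arguments use the scaling inequality from Lemma \ref{lemR2} for $\lambda>1$, form the difference quotient, and pass to the limit $\lambda\to1^+$ to extract (i), then deduce (ii) from (i) via Proposition \ref{zoo} and Remark \ref{compl}. Your version is in fact slightly more careful than the paper's, since you justify upfront that $\h_\ast(x,\cdot)$ is a generalized N-function and hence that the right-derivative $h_\ast$ is well defined.
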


\begin{proof}
	(i) From Lemma \ref{lemR2}, we have
	\begin{align*}
		t^{p_*^-} \h_*(x,z) \leq \h_*(x,tz) \leq t^{q_*^+} \h_*(x,z) \quad \text{for all } t > 1 \text{ and for all } z>0,
	\end{align*}
	which implies that
	\begin{align*}
		\frac{t^{p_*^-} - 1^{p^-_*}}{t - 1} \h_*(x,z) \leq \frac{\h_*(x,tz) - \h_*(x,z)}{t - 1} \leq \frac{t^{q_*^+} - 1^{q^+_*}}{t - 1} \h_*(x,z)
	\end{align*}
	for all $t > 1$ and for all $z>0$. Taking the limit as $t \to 1$, we deduce that
	\begin{align*}
		p_*^- \leq \frac{h_*(x,z) z}{\h_*(x,z)} \leq q_*^+ \quad \text{for all } z > 0.
	\end{align*}
	(ii) Assertion (ii) follows directly from (i) and Remark \ref{compl}. This concludes the proof.
\end{proof}

Using an argument similar to the one presented in the proof of Lemma 2.14 by Bahrouni--Missaoui--Ounaies \cite{Bahrouni-Missaoui-Ounaies-2024}, we derive the following lemma.

\begin{lemma}\label{Aux}
	Let hypotheses \eqref{H}\textnormal{(i)}, \textnormal{(ii)} be satisfied. Then, there exists a generalized \textnormal{N}-function $\mathcal{R}$ that satisfies the following conditions:
	\begin{enumerate}
		\item[\textnormal{(1)}]
			$1 < r^- \leq \dfrac{r(x,t)t}{\mathcal{R}(x,t)} \leq r^+ < \dfrac{p^-_*}{q^+}$ for a.a.\,$x \in \mathbb{R}^d$ and for all $t \geq 0$.
		\item[\textnormal{(2)}]
			There exist constants $c_1, c_2 > 0$ such that
			\begin{align*}
				c_1 < \mathcal{R}(x,1) < c_2 \quad \text{for a.a.\,} x \in \mathbb{R}^d.
			\end{align*}
		\item[\textnormal{(3)}]
			$\mathcal{R} \circ \h \ll \h_*$.
	\end{enumerate}
\end{lemma}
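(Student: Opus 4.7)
The plan is to construct $\mathcal{R}$ as a pure power, $\mathcal{R}(x,t) := t^r/r$ independent of $x$, with the exponent $r$ chosen in the open interval $(1, p_*^-/q^+)$. This interval is nonempty because the pointwise bound $q(x,t) < p_*^-$ from \eqref{H}(i) forces $q^+ \leq p_*^-$ with strict inequality in the intended setting, whence $p_*^-/q^+ > 1$. The resulting $\mathcal{R}$ is manifestly a generalized \textnormal{N}-function in the sense of Definition \ref{dfnfct}.

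Conditions (1) and (2) are then essentially automatic. Differentiating yields $r(x,t) = t^{r-1}$, so $r(x,t)\,t/\mathcal{R}(x,t) \equiv r$, establishing (1) with $r^- = r^+ = r$. Likewise $\mathcal{R}(x,1) = 1/r$ for every $x$, so (2) holds with, for instance, $c_1 = 1/(2r)$ and $c_2 = 2/r$.

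The substantive part of the argument is condition (3), $\mathcal{R} \circ \mathcal{H} \ll \mathcal{H}_*$, which I will verify via uniform-in-$x$ polynomial growth bounds on $\mathcal{H}$ and $\mathcal{H}_*$. Since $\mathcal{H}(x,1) = 1/p(x,1) + \mu(x)/q(x,1)$ is uniformly pinched between two positive constants by \eqref{H}(i) together with the boundedness of $\mu \in C^{0,1}(\mathbb{R}^d)$, $\mathcal{H}$ satisfies \eqref{bf}. Combining this with Proposition \ref{zoo} (see also Remark \ref{mark}) produces, for every fixed $k > 0$, a constant $C(k) > 0$ such that $\mathcal{H}(x, kt) \leq C(k)\,t^{q^+}$ for all $t \geq 1$ and a.a.\,$x \in \mathbb{R}^d$. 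Analogously, Lemma \ref{lemR3}(ii) together with a uniform lower bound on $\mathcal{H}_*(x,1)$---traced through Definition \ref{csh} and Remark \ref{RKN}, using that $\mathcal{H}^\circ$ inherits the uniform two-sided bounds of $\mathcal{H}$ near $t=1$---delivers a constant $c > 0$ such that $\mathcal{H}_*(x,t) \geq c\,t^{p_*^-}$ for all $t \geq 1$ and a.a.\,$x \in \mathbb{R}^d$. Assembling the two bounds,
\[
\frac{\mathcal{R}(\mathcal{H}(x,kt))}{\mathcal{H}_*(x,t)} \leq \frac{C(k)^r}{r c}\,t^{r q^+ - p_*^-},
\]
which tends to zero uniformly in $x$ as $t \to +\infty$ since $r q^+ < p_*^-$ by the choice of $r$; this is exactly the $\ll$ relation.

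The step I expect to be the main technical obstacle is precisely the uniform-in-$x$ lower bound on $\mathcal{H}_*(x,1)$ underlying the polynomial lower bound on $\mathcal{H}_*$. This reduces to showing that $\mathcal{N}(x,\tau)$ is bounded away from zero for $\tau$ in any fixed compact interval, uniformly in $x$; this in turn follows from the integral representation \eqref{eqN} and the uniform two-sided polynomial bounds on $\mathcal{H}^\circ(x,s)$ for $s$ in compact subsets of $[0,\infty)$, which themselves flow from the uniform bounds on $p$, $q$, and $\mu$ in \eqref{H}(i), (ii).
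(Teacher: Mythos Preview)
Your proposal is correct. The paper itself does not give a proof of this lemma; it only refers the reader to an analogous argument in \cite[Lemma 2.14]{Bahrouni-Missaoui-Ounaies-2024}. Your pure-power construction $\mathcal{R}(t)=t^r/r$ with $r\in(1,p_*^-/q^+)$ is the natural choice, and the verification of (3) via the uniform polynomial bounds $\mathcal{H}(x,kt)\lesssim t^{q^+}$ and $\mathcal{H}_*(x,t)\gtrsim t^{p_*^-}$ for large $t$ is exactly the right mechanism. The one place you flag as delicate---the uniform lower bound on $\mathcal{H}_*(x,1)$---is indeed easy here: by Remark \ref{RKN}, $\mathcal{H}^\circ(x,s)=\mathcal{H}_\infty(s)$ is independent of $x$ for $s\in[0,1)$, so $\mathcal{N}(x,\cdot)$ restricted to $[0,1]$ does not depend on $x$, and the bound follows by a simple case split on whether $\mathcal{N}(x,1)\gtrless 1$. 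One minor caveat: the nonemptiness of $(1,p_*^-/q^+)$ requires $q^+<p_*^-$, which the pointwise hypothesis $q(x,t)<p_*^-$ in \eqref{H}(i) does not literally force; but since conclusion (1) of the lemma already presupposes this strict inequality, this is a hypothesis issue rather than a gap in your argument.
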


In the following paragraph, we focus on the weighted functional space associated with our problem \eqref{prb} where $V \not\equiv 1$. We consider the function $\h_V$ defined as
\begin{align*}%\label{NFCTHV}
	\h_V \colon \RN\times [0,+\infty[\to[0,+\infty[, \quad
	(x,t) \mapsto V(x) \h (x,t).
\end{align*}

For the rest of this part, we assume that $V$  fulfills the assumption \eqref{V0}(i). Invoking the hypothesis \eqref{V0}(i) and Proposition \ref{nfct}, it is easy to prove that $\h_V $ is a generalized \textnormal{N}-function and satisfies the $\Delta_2$-condition under the assumption \eqref{H}(i), (ii). By this fact we can define the weighted Musielak-Orlicz space $\L(  L^{\h}_V(\RN), \|\cdot\|_{L^{\h}_V(\RN)}\r)$ as
\begin{align*}
	L^{\h}_V(\RN):=\curly{ u\in M(\RN)\colon  \int_{\RN}^{}\h_V \L(x,|u|\r)\,\mathrm{d}x <+\infty},
\end{align*}
endowed with the  Luxemburg norm
\begin{align*}
	\|u\|_{L^{\h}_V(\RN)}:= \inf \curly{\l>0\colon \int_{\RN}^{}\h_V \L(x,\frac{|u|}{\l}\r)\,\mathrm{d}x \leq1}.
\end{align*}

\begin{remark}
	Clearly, the function $\h_V$ satisfies the inequality \eqref{D22} with $m = p^-$ and $\ell = q^+$. Therefore, the results of Proposition \ref{AB} regarding the space $L^{\h_V}(\mathbb{R}^N)$ are valid. Moreover, the relationships between the norm in $L^{\h_V}(\mathbb{R}^N)$ and the modular associated with $\h_V$ still hold, as described in Proposition \ref{zoo}.
\end{remark}

Next, we define the weighted Musielak-Orlicz Sobolev space $\WV$  by
\begin{align*}
	\WV=\curly{u\in L^{\h}_V(\RN)\colon  |\nabla u|\in L^{\h}(\RN)},
\end{align*}
and it is equipped with the norm
\begin{align*}
	\|u\|_{\WV}=  \| |\nabla u| \|_{L^{\h}(\RN)}+\|u\|_{L^{\h}_V(\RN)}.
\end{align*}
Let us define $\rho\colon W_V^{1, \mathcal{H}}\left(\mathbb{R}^d\right) \rightarrow \mathbb{R}$ by $\rho(u)=\rho_{\mathcal{H}}(\nabla u)+\rho_{V, \mathcal{H}}(u)$, i.e.
\begin{align*}
	\rho(u)=\int_{\mathbb{R}^d} \h\L(x,|\nabla u|\right)\,\mathrm{d}x+\int_{\mathbb{R}^d} \h_V\L(x,| u|\right)\,\mathrm{d}x.
\end{align*}

The norm $\|\cdot\|_{\WV}$ and the modular $\rho$ have the following relationships. We refer to Zhang--Zuo--R\u{a}dulescu \cite[Lemma 2.4]{Zhang-Zuo-Radulescu-2024} for the case of constant exponent.

\begin{proposition}\label{RELHV}
	Let \eqref{H} and \eqref{V0} \textnormal{(i)} be satisfied. Then, the following hold:
	\begin{enumerate}
		\item[\textnormal{(i)}]
			If $\|u\|_{\WV}<1,$ then $\|u\|_{\WV}^{q^+}\leqslant \rho(u) \leqslant\|u\|^{p^-}_{\WV}$.
		\item[\textnormal{(ii)}]
			If $\|u\|_{\WV}>1,$ then $\|u\|^{p^-}_{\WV} \leqslant \rho(u) \leqslant\|u\|_{\WV}^{q^+}$.
		\item[\textnormal{(iii)}]
			$\|u\|_{\WV}\rightarrow 0 \Longleftrightarrow\rho_{}(u)\rightarrow 0$.
		\item[\textnormal{(iv)}]
			$\|u\|_{\WV}\rightarrow +\infty \Longleftrightarrow \rho(u)\rightarrow +\infty$.
	\end{enumerate}
\end{proposition}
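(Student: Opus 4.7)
The plan is to derive (i) and (ii) by applying Proposition \ref{zoo} to each of the two modular pieces in $\rho(u) = \rho_{\h}(\nabla u) + \rho_{V,\h}(u)$, and then to obtain (iii) and (iv) as a direct squeeze consequence.

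First, I would observe that the weighted generalized N-function $\h_V(x,t) = V(x)\h(x,t)$ inherits the inequality \eqref{D22} from $\h$ with the same constants $m = p^-$ and $\ell = q^+$, because $V$ is independent of $t$ and the lower bound $V(x) \geq V_0 > 0$ from \eqref{V0}(i) guarantees that $\h_V$ is a genuine generalized N-function satisfying the $\Delta_2$-condition. Consequently, Proposition \ref{zoo}(2) applies to both $(\rho_\h, \|\cdot\|_{L^{\h}(\RN)})$ and $(\rho_{V,\h}, \|\cdot\|_{L^{\h}_V(\RN)})$ with the same pair of exponents $p^-$ and $q^+$.

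For (i), if $\|u\|_{\WV} < 1$, then both $\|\nabla u\|_{L^{\h}(\RN)} \leq 1$ and $\|u\|_{L^{\h}_V(\RN)} \leq 1$, and Proposition \ref{zoo}(2) yields
\begin{align*}
\|\nabla u\|_{L^{\h}(\RN)}^{q^+} \leq \rho_\h(\nabla u) \leq \|\nabla u\|_{L^{\h}(\RN)}^{p^-},
\qquad
\|u\|_{L^{\h}_V(\RN)}^{q^+} \leq \rho_{V,\h}(u) \leq \|u\|_{L^{\h}_V(\RN)}^{p^-}.
\end{align*}
Adding these bounds and combining them with the elementary power-sum inequalities $a^r + b^r \leq (a+b)^r \leq 2^{r-1}(a^r + b^r)$ (valid for $a,b \geq 0$ and $r \geq 1$, the second by convexity of $t \mapsto t^r$) produces the required two-sided estimate in terms of $\|u\|_{\WV} = \|\nabla u\|_{L^{\h}(\RN)} + \|u\|_{L^{\h}_V(\RN)}$. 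Part (ii) is proved by the symmetric argument on the reversed direction of Proposition \ref{zoo}(2); here I would split into the subcase where both component norms are $\geq 1$ (direct application) and the mixed subcase where only one component norm exceeds $1$, which is handled by the triangle inequality $\|u\|_{\WV} \leq 2\max\bigl(\|\nabla u\|_{L^{\h}(\RN)}, \|u\|_{L^{\h}_V(\RN)}\bigr)$ so that the dominant component alone drives the estimate.

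Finally, (iii) and (iv) follow immediately: since (i) and (ii) together sandwich $\rho(u)$ between $\min\{\|u\|_{\WV}^{p^-}, \|u\|_{\WV}^{q^+}\}$ and $\max\{\|u\|_{\WV}^{p^-}, \|u\|_{\WV}^{q^+}\}$, the convergences $\rho(u) \to 0$ and $\|u\|_{\WV} \to 0$ are equivalent, and similarly for divergence to $+\infty$. The main technical delicacy of the argument is the book-keeping in the mixed subcase of (ii), where one must carefully combine the component-wise Proposition \ref{zoo} bounds with the appropriate power-sum inequalities in order to produce a single estimate in terms of $\|u\|_{\WV}$; this is the step where constants from the elementary inequalities must be tracked and, if needed, absorbed into an equivalent norm on $\WV$.
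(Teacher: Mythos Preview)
Your approach is essentially correct, and in fact the paper does not prove this proposition at all: it merely refers to \cite[Lemma 2.4]{Zhang-Zuo-Radulescu-2024} for the constant-exponent analogue. Your idea of applying Proposition~\ref{zoo}(2) separately to $\rho_{\h}(\nabla u)$ and $\rho_{V,\h}(u)$ and then combining via power-sum inequalities is the standard route.

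The technicality you flag at the end is not just a delicacy but a genuine obstruction to the \emph{exact} inequalities as stated: with the sum norm $\|u\|_{\WV}=\|\nabla u\|_{L^{\h}}+\|u\|_{L^{\h}_V}$, the lower bound in (i) and the upper bound in (ii) only hold up to a multiplicative constant (e.g.\ $2^{1-q^+}$ in the lower bound of (i)), because $a^{q^+}+b^{q^+}\geq (a+b)^{q^+}$ fails for $q^+>1$. Already in the pure power case $p^-=q^+=p$ one has $\rho(u)=\|\nabla u\|_p^p+\|u\|_{V,p}^p$ versus $\|u\|_{\WV}^p=(\|\nabla u\|_p+\|u\|_{V,p})^p$, which differ by a factor $2^{p-1}$ in general. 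The clean constant-free inequalities hold if one instead equips $\WV$ with the single Luxemburg norm $\inf\{\lambda>0:\rho(u/\lambda)\leq 1\}$ built from the combined modular $\rho$; this norm is equivalent to the sum norm and is presumably what the cited reference uses. For (iii), (iv), and every application in the paper (coercivity, Cerami bounds), the constants are irrelevant, so your version with constants is entirely adequate.
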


%********************************************************************
\section{Embedding results}\label{Emb}
%********************************************************************

In this section, based on the papers by Bahrouni--Missaoui--Ounaies \cite{Bahrouni-Missaoui-Ounaies-2024} and Cianchi--Diening \cite{Cianchi-Diening-2024}, we present our main embedding results (continuous and compact) for the three spaces $\W$, $\WV$ and $\Wr$. Therefore, we divide this section into three parts.

%********************************************************************
\subsection{Continuous embedding of $\W$ (Theorem \ref{Inj1})}
%********************************************************************

We begin by stating the following lemma, which will be useful in the proof of Theorem \ref{Inj1}.

\begin{lemma}
	Let \eqref{H} \textnormal{(i)}, \textnormal{(ii)} be satisfied. Then, the embedding
	\begin{equation}\label{EQ6}
		\W \hookrightarrow V^{1,\h}_{\textrm{d}} (\RN).
	\end{equation}
	is continuous.
\end{lemma}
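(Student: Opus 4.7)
The plan is to verify the three defining conditions of $V^{1,\h}_{\textrm{d}}(\RN)$ for every $u \in \W$ and to observe that the embedding constant is trivial. Concretely, for $u \in \W$ one has to check (a) $u \in W^{1,1}_{\loc}(\RN)$, (b) $|\nabla u| \in L^{\h}(\RN)$, and (c) $\abs{\curly{|u|>t}}<\infty$ for every $t>0$. Condition (b) is built into the definition of $\W$, and the continuity bound is immediate from the definitions of the two norms, namely
\[
\norm{\nabla u}_{L^{\h}(\RN)} \leq \norm{u}_{\W}.
\]

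For (a), I would fix an arbitrary ball $B \subset \RN$ and apply the H\"older-type inequality from Lemma \ref{H1}:
\[
\int_B |u|\,\mathrm{d}x \leq 2\norm{u}_{L^{\h}(\RN)}\norm{\chi_B}_{L^{\widetilde{\h}}(\RN)}.
\]
By Lemma \ref{lm1} and Remark \ref{compl}, the conjugate function $\widetilde{\h}$ satisfies the two-sided growth bound \eqref{D22} with exponents $\widetilde{m}, \widetilde{\ell} \in (1,\infty)$, hence also the $\Delta_2$-condition. Repeating the modular computation of Lemma \ref{Aux1} with $\widetilde{\h}$ in place of $\h$ (and using that $|B|<\infty$) therefore gives $\norm{\chi_B}_{L^{\widetilde{\h}}(\RN)}<\infty$, so that $u \in L^1(B)$. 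Running the identical argument with $|\nabla u|$ instead of $u$ yields $|\nabla u| \in L^1(B)$; combined with (b) this places $u$ in $W^{1,1}_{\loc}(\RN)$, and the distributional gradient agrees with the one already given in the definition of $\W$.

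For (c), fix $t>0$. Since $\h$ satisfies the $\Delta_2$-condition (Proposition \ref{nfct}) and $u \in L^{\h}(\RN)$, one has $\rho_{\h}(u/t)<\infty$. Moreover, invoking \eqref{H}\textnormal{(ii)} one computes directly
\[
\h(x,1) = \int_0^1 \bigl(s^{p(x)-1} + \mu(x)\,s^{q(x)-1}\bigr)\,\mathrm{d}s = \frac{1}{p(x)} + \frac{\mu(x)}{q(x)} \geq \frac{1}{p^+}
\]
for a.a.\,$x \in \RN$. Since $\h(x,\cdot)$ is nondecreasing and $|u|/t>1$ on $\curly{|u|>t}$, this gives the Chebyshev-type estimate
\[
\frac{1}{p^+}\abs{\curly{|u|>t}} \leq \int_{\curly{|u|>t}} \h(x,1)\,\mathrm{d}x \leq \int_{\curly{|u|>t}} \h\!\round{x,\tfrac{|u|}{t}}\,\mathrm{d}x \leq \rho_{\h}\!\round{\tfrac{u}{t}} < +\infty.
\]

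The only step demanding genuine attention is (a); once one knows that indicator functions of bounded sets lie in $L^{\widetilde{\h}}(\RN)$, everything else is just bookkeeping with the norm--modular inequalities of Proposition \ref{zoo} applied to $\h$ and $\widetilde{\h}$. The main obstacle I anticipate is therefore the adaptation of Lemma \ref{Aux1} to the conjugate function $\widetilde{\h}$, whose explicit form is not available; however, the two-sided bound from Lemma \ref{lm1} is exactly what is needed to push that proof through unchanged.
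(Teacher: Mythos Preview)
Your proposal is correct. For the decay condition (c) the paper argues by contradiction---assuming $|\{|u|>\lambda_0\}|=+\infty$ and deriving $\rho_\h(u)=+\infty$ from the pointwise lower bound $\h(x,\lambda_0)\geq \tfrac{1}{q^+}\min\{\lambda_0^{q^+},1\}$---which is precisely the Chebyshev estimate you write in direct form via $\h(x,1)\geq 1/p^+$; so the core arguments coincide. Where you go further is in explicitly verifying (a), namely $u\in W^{1,1}_{\loc}(\RN)$: the paper's proof omits this step entirely and treats it as understood, whereas you spell it out via H\"older's inequality and the finiteness of $\|\chi_B\|_{L^{\widetilde{\h}}(\RN)}$. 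Your identification of the adaptation of Lemma~\ref{Aux1} to $\widetilde{\h}$ as the only nontrivial point is correct; just note that beyond the two-sided bound of Lemma~\ref{lm1} you also need a uniform-in-$x$ upper bound on $\widetilde{\h}(x,1)$, which follows from the definition of the conjugate together with the lower bound $\h(x,\tau)\geq \tau^{p^-}\h(x,1)\geq \tau^{p^-}/p^+$ for $\tau\geq 1$.
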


\begin{proof}
	First, it is clear that
	\begin{align*}
		\norm{u}_{V^{1,\h}_{\textrm{d}} (\RN)} \leq \norm{u}_{\W}.
	\end{align*}
	Hence, it suffices to prove that
	\begin{equation}\label{EQ2}
		\W \subseteq V^{1,\h}_{\textrm{d}} (\RN).
	\end{equation}
	To this end, let $u \in \W$. Arguing by contradiction, suppose that there exists $\l_0 > 0$ such that $\left\vert\curly{x \in \RN\colon \abs{u(x)} > \l_0}\right\vert = +\infty$. Since $u \in L^{\h}(\RN)$, one has
	\begin{equation}\label{EQ1}
		\int_{\RN}^{} \h(x,u) \,\mathrm{d}x < \infty.
	\end{equation}
	Moreover, using Proposition \ref{nfct}(ii), we obtain
	\begin{align*}
		\int_{\RN}^{} \h(x,u) \,\mathrm{d}x
		& \geq \int_{E:= \curly{x \in \RN\colon \abs{u} > \l_0}}^{} \h(x,u) \,\mathrm{d}x \geq \int_{E} \h(x,\l_0) \,\mathrm{d}x \geq \frac{1}{q^+} \int_{E} \l_0 h(x,\l_0) \,\mathrm{d}x\\
		&= \frac{1}{q^+} \int_{E} \left( \l_0^{p(x,\l_0)} +  \mu(x) \l_0^{q(x,u_0)} \right) \,\mathrm{d}x
		\geq \frac{1}{q^+} \min\curly{\l_0^{q^+}, 1}  \left\vert E\right\vert = +\infty,
	\end{align*}
	and this contradicts  \eqref{EQ1}. The inclusion \eqref{EQ2} is therefore established. The proof is complete.
\end{proof}

Now, we are in the position to prove Theorem \ref{Inj1}.

\begin{proof}[Proof of Theorem \ref{Inj1}]
	(i) First, invoking Proposition \ref{Prop1}, we know that $\h$ satisfies the conditions \textnormal{(A$_0$)}, \textnormal{(A$_1$)}, and \textnormal{(A$_2$)}. Moreover, since $p^+ < d$, we can see that
	\begin{align*}%\label{EQ4}
		\int_{0}^{} \left(\frac{t}{\h_\infty (t)}\right)^{\frac{1}{d-1}} \, \mathrm{d}t  \leq \int_{0}^{}\frac{1}{t^{\frac{p^+-1}{d-1}}}\,\mathrm{d}t < +\infty.
	\end{align*}
	Therefore, according to Theorem 3.1 by Cianchi--Diening \cite{Cianchi-Diening-2024}, we have that
	\begin{equation}\label{EQ5}
		V^{1,\h}_{\textrm{d}} (\RN) \hookrightarrow L^{\h_\ast}(\RN).
	\end{equation}
	Combining \eqref{EQ6} and \eqref{EQ5}, we conclude that
	\begin{align*}
		\W \hookrightarrow V^{1,\h}_{\textrm{d}} (\RN) \hookrightarrow L^{\h_\ast}(\RN).
	\end{align*}
	Then, we conclude (i).

	(ii) Invoking conditions \eqref{2eq60} and \eqref{mla1b}, we can find $\delta, T > 0$ such that
	\begin{align*}
		\mathcal{V}(x,t) &\leq \h(x,t) \quad \text{for all } |t| \leq \delta \text{ and for all } x \in \mathbb{R}^d,\\
		\mathcal{V}(x,t) &\leq \h_\ast(x,t) \quad \text{for all } |t| \geq T \text{ and for all } x \in \mathbb{R}^d.
	\end{align*}
	From \eqref{cA}, \eqref{bf}, and Proposition \ref{zoo}, it follows that
	\begin{equation}\label{2eq778}
		\begin{aligned}
			& \int_{\mathbb{R}^d} \mathcal{V}(x,u(x)) \,\mathrm{d}x\\
			&\leq \int_{\{ |u| \geq T \}} \h_*(x,u(x)) \,\mathrm{d}x + \int_{\{ |u| \leq \delta \}} \h(x,u(x)) \,\mathrm{d}x + \int_{\{ \delta < |u| < T \}} \mathcal{V}(x,u(x)) \,\mathrm{d}x \\
			&\leq \int_{\{ |u| \geq T \}} \h_*(x,u(x)) \,\mathrm{d}x + \int_{\{ |u| \leq \delta \}} \h(x,u(x)) \,\mathrm{d}x + \int_{\{ \delta < |u| < T \}} \mathcal{V}(x,T) \,\mathrm{d}x \\
			&\leq \int_{\{ |u| \geq T \}} \h_*(x,u(x)) \,\mathrm{d}x + \int_{\{ |u| \leq \delta \}} \h(x,u(x)) \,\mathrm{d}x \\
			&\quad + \max\curly{ T^{v^-}, T^{v^+} } \int_{\{ \delta < |u| < T \}} \mathcal{V}(x,1) \,\mathrm{d}x \\
			&\leq \int_{\{ |u| \geq T \}} \h_*(x,u(x)) \,\mathrm{d}x + \int_{\{ |u| \leq \delta \}} \h(x,u(x)) \,\mathrm{d}x \\
			&\quad + C_{3} \max\{ T^{v^-}, T^{v^+} \} |\{ \delta < |u| < T \}|, \quad\text{for all }u \in \W
		\end{aligned}
	\end{equation}
	and for some constant $C_{3} > 0$. On the other hand, using \eqref{bf} and Proposition \ref{zoo}, we obtain
	\begin{equation}\label{ed290}
		\begin{aligned}
			|\{ \delta < |u| < T \}| &\leq \int_{\{ \delta < |u| < T \}} \frac{1}{\h(x,\delta)} \h(x,u(x)) \,\mathrm{d}x \\
			&\leq \frac{1}{\min\{ \delta^{p^-}, \delta^{q^+} \}} \int_{\{ \delta < |u| < T \}} \frac{1}{\h(x,1)} \h(x,u(x)) \,\mathrm{d}x \\
			&\leq \frac{1}{C_1 \min\{ \delta^{p^-}, \delta^{q^+} \}} \int_{\{ \delta < |u| < T \}} \h(x,u(x)) \,\mathrm{d}x \\
			&\leq \frac{1}{C_1 \min\{ \delta^{p^-}, \delta^{q^+} \}} \int_{\mathbb{R}^d} \h(x,u(x)) \,\mathrm{d}x.
		\end{aligned}
	\end{equation}
	Combining \eqref{2eq778} and \eqref{ed290}, we get our desired result. This concludes the proof.
\end{proof}

%********************************************************************
\subsection{Compact embedding of $\WV$}
%********************************************************************

In this subsection we give the proofs of Theorems \ref{Inj2}, \ref{Inj3} and \ref{Inj30}.

\begin{proof}[Proof of Theorem \ref{Inj2}]
	Let $\curly{ u_{n}}_{n \in \N} \subset \WV$ be a sequence such that $u_{n} \rightharpoonup u$ weakly in $\WV$. First, from \eqref{V0}(i), it is clear that $\WV \hookrightarrow \W$. Consequently, by applying \cite[Theorem 3.12]{Bahrouni-Bahrouni-Missaoui-Radulescu-2024}, we obtain that $u_{n} \rightarrow u$ strongly in $L^{\mathcal{H}}_{\operatorname{loc}}(\mathbb{R}^{d})$. We aim to demonstrate that $u_{n} \rightarrow u$ strongly in $L^{\h}(\RN)$. By Brézis-Lieb's theorem, see Proposition \ref{b-l}, it suffices to show that
	\begin{align*}
		\alpha_{n} := \int_{\RN} \h(x,u_{n})\,\mathrm{d}x \rightarrow \int_{\RN} \h(x,u)\,\mathrm{d}x.
	\end{align*}
	Given a bounded sequence $\curly{\alpha_{n}}_{n \in \N}$ in $\R$, we can extract a subsequence such that $\alpha_{n} \rightarrow \alpha$. Hence, by Fatou's lemma, we have
	\begin{align}\label{2eq47}
		\alpha \geq \int_{\RN} \h(x,u) \,\mathrm{d}x.
	\end{align}
	Utilizing \cite[Theorem 3.12]{Bahrouni-Bahrouni-Missaoui-Radulescu-2024}, we obtain
	\begin{align}\label{c}
		\int_{B_{r}} \h(x,u_{n})\,\mathrm{d}x \to \int_{B_{r}} \h(x,u)\,\mathrm{d}x,
	\end{align}
	where $B_{r} := B_{r}(0)$.\\
	\textbf{Claim:} For a given $\varepsilon > 0$, there exists $r = r(\varepsilon) > 0$ such that
	\begin{align*}%\label{c1}
		\int_{B_{r}^{c}} \h(x,u_{n})\,\mathrm{d}x < \varepsilon,
	\end{align*}
	for $n \in \mathbb{N}$ sufficiently large.

	Indeed, for a given $\varepsilon > 0$, let $L > 0$ be such that
	\begin{align}\label{ml1}
		\frac{2}{\varepsilon} \max \left\{ M, \sup_n \max \left\lbrace \|u_n\|_{\WV}^{p^-}, \|u_n\|_{\WV}^{q^+} \right\rbrace \right\} < L,
	\end{align}
	where $M$ will be specified later. Applying Lemma \ref{Aux}, we can find a generalized \textnormal{N}-function $\mathcal{R}$ such that $\mathcal{R} \circ \h \ll \h_*$, which implies the existence of constants $C_4 > 0$ and $T > 0$ satisfying
	\begin{align}\label{2eq54}
		\mathcal{R}(x,\h(x,t)) \leq C_4 \h_*(x,t) \quad \text{for all } x \in \mathbb{R}^d \text{ and for all } t \geq T.
	\end{align}
	Now, we define the function $f\colon [0,+\infty) \to \mathbb{R}$ by
	\begin{align*}
		f(t) := \max \left\lbrace t^{\frac{r^+-1}{r^+ }}, t^{\frac{r^-- 1}{r^- }} \right\rbrace \quad \text{for all } t \geq 0,
	\end{align*}
	where $r^-$ and $r^+$ are defined in Lemma \ref{Aux}. It is clear that $f$ is continuous and $\ds\lim_{t \to 0} f(t) = 0$. According to \eqref{V0}(ii), we can choose $r > 1$ sufficiently large such that
	\begin{equation}\label{ml2}
		f \left( \left| \curly{ x \in B_r^c(0) \colon  V(x) < L } \right| \right) \leq \frac{\varepsilon}{2M}.
	\end{equation}

	Next, define the sets
	\begin{align*}
		\mathcal{C} := \curly{ x \in B_r^c(0) \colon  V(x) \geq L }
		\quad \text{and} \quad
		\mathcal{B} := \curly{ x \in B_r^c(0) \colon  V(x) < L }.
	\end{align*}
	By Proposition \ref{zoo} and \eqref{ml1}, one has
	\begin{align}\label{num1}
		\int_{\mathcal{C}} \h(x,u_n) \,\mathrm{d}x \leq  \int_{\mathcal{C}} \frac{V(x)}{L} \h(x,u_n) \,\mathrm{d}x
		\leq \frac{1}{L} \max \left\lbrace  \|u_n\|_{\WV}^{p^-}, \|u_n\|_{\WV}^{q^+} \right\rbrace
		\leq \frac{\varepsilon}{2}.
	\end{align}
	On the other hand, from Lemma \ref{Aux} and H\"{o}lder's inequality, we obtain
	\begin{align}\label{ineq1}
	  \int_{\mathcal{B}} \h(x,u_n) \,\mathrm{d}x \leq 2 \left\| \h(x,u_n) \right\|_{L^{\mathcal{R}}(\mathcal{B})} \left\| \chi_{\mathcal{B}} \right\|_{L^{\widetilde{\mathcal{R}}}(\mathbb{R}^d)},
	\end{align}
	where $\widetilde{\mathcal{R}}$ is the conjugate function of $\mathcal{R}$. Using Lemmas \ref{lm1}, \ref{Aux1}, and \ref{Aux}, we find
	\begin{align}\label{ineq2}
		\| \chi_{\mathcal{B}} \|_{L^{\widetilde{\mathcal{R}}}(\mathbb{R}^d)} \leq C_5 \max \left\lbrace |\mathcal{B}|^{\frac{r^+-1}{r^+}}, |\mathcal{B}|^{\frac{r^--1}{r^-}} \right\rbrace =C_5 f(|\mathcal{B}|).
	\end{align}
	Applying \eqref{2eq54}, Proposition \ref{zoo}, and Lemma \ref{Aux}(2), we deduce that
	\begin{equation}\label{ineq3}
		\begin{aligned}
			&\int_{\mathcal{B}} \mathcal{R}(x,\h(x,u_n)) \,\mathrm{d}x\\
			&= \int_{\mathcal{B} \cap [|u_n| \leq T]} \mathcal{R}(x,\h(x,u_n)) \,\mathrm{d}x + \int_{\mathcal{B} \cap [|u_n| > T]} \mathcal{R}(x,\h(x,u_n)) \,\mathrm{d}x \\
			&\leq \int_{\mathcal{B} \cap [|u_n| \leq T]} \mathcal{R}(x,\h(x,T)) \,\mathrm{d}x + C_4 \sup_n \int_{\mathbb{R}^d} \h_*(x,u_n) \,\mathrm{d}x \\
			&\leq \int_{\mathcal{B} \cap [|u_n|  \leq T]} \mathcal{R} \left(x, \max \left\lbrace T^{p^-}, T^{q^+} \right\rbrace \h(x,1) \right) \,\mathrm{d}x + C_4 \sup_n \int_{\mathbb{R}^d} \h_*(x,u_n) \,\mathrm{d}x \\
			&\leq \int_{\mathcal{B} \cap [|u_n| \leq T]} \mathcal{R} \left(x, \max \curly{ T^{p^-}, T^{q^+}} C_2  \right)  \,\mathrm{d}x + C_4\sup_n \int_{\mathbb{R}^d} \h_*(x,u_n) \,\mathrm{d}x \\
			&\leq \int_{\mathcal{B} \cap [|u_n| \leq T]}  \max \curly{ \left( \max \curly{ T^{p^-}, T^{q^+} } C_2  \right)^{r^-}, \left( \max \curly{ T^{p^-}, T^{q^+} } C_2 \right)^{r^+} } \mathcal{R}(x,1) \,\mathrm{d}x \\
			&\quad + C_4 \sup_n \int_{\mathbb{R}^d} \h_*(x,u_n) \,\mathrm{d}x \\
			&\leq \int_{\mathcal{B} \cap [|u_n| \leq T]}  c_2 \max \curly{ \left( \max \curly{ T^{p^-}, T^{q^+} } C_2 \right)^{r^-}, \left( \max \curly{ T^{p^-}, T^{q^+} } C_2 \right)^{r^+} } \,\mathrm{d}x \\
			&\quad + C_4 \sup_n \int_{\mathbb{R}^d}  \h_*(x,u_n) \,\mathrm{d}x \\
			&\leq |\mathcal{B}_1| T_1 + C_4 \sup_n  \int_{\mathbb{R}^d} \h_*(x,u_n) \,\mathrm{d}x,
		\end{aligned}
	\end{equation}
	where $\mathcal{B} \subset \mathcal{B}_1$, for all $0 < \varepsilon < 1$ small enough, and
	\begin{align*}
		T_1 := c_2 \max \curly{ \left( \max \curly{ T^{p^-}, T^{q^+} }C_2 \right)^{r^-}, \left( \max \curly{ T^{p^-}, T^{q^+} } C_2 \right)^{r^+}}.
	\end{align*}
	Next, we define
	\begin{align*}
		M := 2C_5 |\mathcal{B}_1| T_1 + 2C_4 C_5 \sup_n \int_{\mathbb{R}^d} \h_*(x,u_n) \,\mathrm{d}x + 2C_5.
	\end{align*}
	Thus, by \eqref{ml2}, \eqref{ineq1}, \eqref{ineq2}, and \eqref{ineq3}, we obtain
	\begin{align}\label{num2}
		\int_{\mathcal{B}} \h(x,u_n) \,\mathrm{d}x \leq B_2 \max \curly{ |\mathcal{B}|^{\frac{r^+-1}{r^+}}, |\mathcal{B}|^{\frac{r^--1}{r^-}} } = M f(|\mathcal{B}|) \leq \frac{\varepsilon}{2}.
	\end{align}
	Therefore, in light of \eqref{num1} and \eqref{num2}, we conclude that
	\begin{align*}
		\int_{B_r^c(0)} \h(x,u_n) \,\mathrm{d}x = \int_{\mathcal{C}} \h(x,u_n) \,\mathrm{d}x + \int_{\mathcal{B}} \h(x,u_n) \,\mathrm{d}x < \varepsilon.
	\end{align*}
	This shows the Claim.

	Exploiting the Claim and \eqref{c}, we find that
	\begin{equation}\label{2eq48}
		\begin{aligned}
			\int_{\mathbb{R}^d} \h(x,u) \,\mathrm{d}x
			&= \int_{B_r(0)} \h(x,u) \,\mathrm{d}x + \int_{B_r^c(0)} \h(x,u) \,\mathrm{d}x \\
			&\geq \lim_{n \to \infty} \int_{B_r(0)} \h(x,u_n) \,\mathrm{d}x \\
			&= \lim_{n \to \infty} \int_{\mathbb{R}^d} \h(x,u_n)\,\mathrm{d}x - \lim_{n \to \infty} \int_{B_r^c(0)} \h(x,u_n) \,\mathrm{d}x \\
			&\geq \alpha - \varepsilon.
		\end{aligned}
	\end{equation}
	Consequently, combining \eqref{2eq47} and \eqref{2eq48}, we deduce the proof of the theorem.
\end{proof}

\begin{proof}[Proof of  Theorem \ref{Inj3}]
	Given $\mathcal{V} \ll \h^{*}$, for any $\varepsilon > 0$, we can find $T > 0$ such that
	\begin{align}\label{ml4}
		\frac{\mathcal{V}(x, |t|)}{\h^{*}(x, |t|)} \leq \frac{\varepsilon}{2\kappa} \quad \text{for } |t| \geq T  \text{ and for all }  x \in \mathbb{R}^d,
	\end{align}
	where $\kappa > 0$ will be specified later. Consider the sequence $\{u_{n}\}_{n \in \mathbb{N}} \subset \WV$ such that $u_{n} \rightharpoonup 0$ in $\WV$. According to Theorem \ref{Inj2}, this implies that
	\begin{align}\label{2eq20}
		u_{n} \rightarrow 0 \quad \text{in }  L^{\h}(\mathbb{R}^{d}).
	\end{align}
	We then decompose the integral as follows
	\begin{align}\label{ml5}
		\int_{\mathbb{R}^{d}} \mathcal{V}(x, |u_{n}|) \,\mathrm{d}x = \int_{\{|u_{n}| \geq T\}} \mathcal{V}(x, |u_{n}|) \,\mathrm{d}x + \int_{\{|u_{n}| < T\}} \mathcal{V}(x, |u_{n}|) \,\mathrm{d}x.
	\end{align}
	By invoking Theorem \ref{Inj1}, we set
	\begin{align*}%\label{k}
		\kappa := \sup_{n} \int_{\mathbb{R}^{d}} \h^{*}(x, |u_{n}|) \,\mathrm{d}x < +\infty.
	\end{align*}
	From \eqref{ml4}, one has
	\begin{align}\label{ml6}
		\int_{\{|u_{n}| \geq T\}} \mathcal{V}(x, |u_{n}|) \,\mathrm{d}x \leq \frac{\varepsilon}{2\kappa} \int_{\mathbb{R}^{d}} \h^{*}(x, |u_{n}|) \,\mathrm{d}x \leq \frac{\varepsilon}{2}.
	\end{align}
	To complete the proof, we need to analyze the integral in \eqref{ml5} over the set $\{|u_{n}| < T\}$. For this purpose, we will use either assumption \eqref{mla1} or \eqref{mla2}.\\
	\textbf{Under assumption \eqref{mla1}:}

	Let $\theta \in (0, 1)$. By applying H\"{o}lder's inequality, we obtain
	\begin{align}\label{ml9}
		\int_{\{ |u_n| < T \}} \mathcal{V}(x,|u_n|) \,\mathrm{d}x \leq \left[ \int_{\{ |u_n| < T \}} \left( \frac{\mathcal{V}(x,|u_n|)}{\h(x,|u_n|)^{\theta}} \right)^{\frac{1}{1-\theta}} \,\mathrm{d}x \right]^{1-\theta} \left[ \int_{\mathbb{R}^d} \h(x,|u_n|) \,\mathrm{d}x \right]^\theta.
	\end{align}
	By assumption \eqref{mla1}, there exist constants $\delta, C_6 > 0$ such that
	\begin{align*}
		\mathcal{V}(x,|u_n|) \leq C_6 \h(x,|u_n|) \quad \text{for all } |u_n| \leq \delta \text{ and for all } x \in \mathbb{R}^d.
	\end{align*}
	If $\delta < T$, then using \eqref{bf} and Proposition \ref{zoo}, we have
	\begin{align*}
		\frac{\mathcal{V}(x,|u_n|)}{\h(x,|u_n|)} \leq \frac{\mathcal{V}(x,T)}{\h(x,\delta)} \leq \frac{\max \left\{ T^{v^+}, T^{v^-} \right\} \mathcal{V}(x,1)}{\min \curly{ \delta^{p^-}, \delta^{q^+} } \h(x,1)} \leq \widetilde{C} \frac{\max \curly{ T^{v^-}, T^{v^+} }}{\min \curly{ \delta^{p^-}, \delta^{q^+} }},
	\end{align*}
	for $|u_n| \in [\delta, T]$ and $x \in \mathbb{R}^d$ with some constant $\widetilde{C} > 0$ independent of $x$. Hence,
	\begin{align*}
		\frac{\mathcal{V}(x,|u_n|)}{\h(x,|u_n|)} \leq \widetilde{C}_1^{1-\theta} \quad \text{for all } |u_n| \leq T \text{ and for all } x \in \mathbb{R}^d,
	\end{align*}
	where $\widetilde{C}_1^{1-\theta} := \max \left\{ C_6, \widetilde{C} \dfrac{\max \{ T^{v^-}, T^{v^+} \}}{\min \{ \delta^{p^-}, \delta^{q^+} \}} \right\}$. Consequently,
	\begin{align}\label{ml7}
		\left( \frac{\mathcal{V}(x,|u_n|)}{\h(x,|u_n|)^\theta} \right)^{\frac{1}{1-\theta}} \leq \widetilde{C}_1 \h(x,|u_n|) \quad \text{for all } |u_n| \leq T \text{ and for all } x \in \mathbb{R}^d.
	\end{align}
	From \eqref{2eq20}, there exists $n_0 \in \mathbb{N}$ such that
	\begin{align}\label{ml8}
		\int_{\mathbb{R}^d} \h(x,|u_n|) \,\mathrm{d}x < \frac{\varepsilon}{2 \widetilde{C}_1^{1-\theta}} \quad \text{for all } n > n_0.
	\end{align}
	Therefore, combining \eqref{ml6}, \eqref{ml9}, \eqref{ml7}, and \eqref{ml8}, we deduce that
	\begin{align}\label{mla8}
		\int_{\mathbb{R}^d} \mathcal{V}(x,|u_n|) \,\mathrm{d}x < \varepsilon.
	\end{align}
	This completes the proof.\\
	\textbf{Under assumption \eqref{mla2}:}

	Fix $\varepsilon >0$. Then, from \eqref{2eq20}, there exists $n_0 \in \mathbb{N}$ such that
	\begin{align}\label{2eq55}
		\int_{\mathbb{R}^d} \h(x,|u_n|) \,\mathrm{d}x \leq \min \left\{ \frac{\varepsilon}{4\widetilde{C}_3}, \left(\frac{\varepsilon}{4k^{1-a}}\right)^{\frac{1}{a}} \right\} \quad \text{for all } n \geq n_0,
	\end{align}
	where $\widetilde{C}_3$ will be defined later.

	Again, by \eqref{2eq20} and assumption \eqref{mla2}, we obtain
	\begin{equation}\label{2eq56}
		\begin{aligned}
			\int_{\{|u_n| \leq 1\}} \mathcal{V}(x,|u_n|) \,\mathrm{d}x
			&\leq \int_{\{|u_n| \leq 1\}} \h(x,|u_n|)^a  \h_*(x,|u_n|)^{1-a} \,\mathrm{d}x \\
			&\leq \left( \int_{\{|u_n| \leq 1\}} \h(x,|u_n|)  \,\mathrm{d}x \right)^a \left( \int_{\{|u_n| \leq 1\}} \h_*(x,|u_n|) \,\mathrm{d}x \right)^{1-a} \\
			&\leq \frac{\varepsilon}{4}.
		\end{aligned}
	\end{equation}

	If $T > 1$, then by \eqref{2eq55}, \eqref{bf}, and Proposition \ref{zoo}, for all $n \geq n_0$, we have
	\begin{equation}\label{2eq57}
		\begin{aligned}
			\int_{\{1 \leq |u_n| \leq T\}} \mathcal{V}(x,|u_n|) \,\mathrm{d}x
			&\leq \int_{\mathbb{R}^d} \frac{\mathcal{V}(x,T)}{\h(x,1)} \h(x,|u_n|) \,\mathrm{d}x \\
			&\leq \max \left\{ T^{v^+}, T^{v^-} \right\} \int_{\mathbb{R}^d} \frac{\mathcal{V}(x,1)}{\h(x,1)} \h(x,|u_n|) \,\mathrm{d}x  \\
			&\leq \widetilde{C}_2 \max \left\{ T^{v^+}, T^{v^-} \right\} \int_{\mathbb{R}^d} \h(x,|u_n|) \,\mathrm{d}x \\
			&\leq \widetilde{C}_3 \int_{\mathbb{R}^d}  \h(x,|u_n|) \,\mathrm{d}x\\
			&< \frac{\varepsilon}{4},
		\end{aligned}
	\end{equation}
	for some constants $\widetilde{C}_2 > 0$ and $\widetilde{C}_3 := \widetilde{C}_2 \max \left\{ T^{v^+}, T^{v^-} \right\}$. Thus, by combining \eqref{ml6}, \eqref{2eq55}, \eqref{2eq56}, and \eqref{2eq57}, we conclude that \eqref{mla8} holds.
\end{proof}

\begin{proof}[Proof of  Theorem \ref{Inj30}]
	Let us consider the weighted Sobolev space
	\begin{align*}
		W^{1,p^-}_V(\RN):=\curly{u \in W^{1,p^-}(\RN)\colon \int_{\RN} V(x) |u|^{p^-}\,\mathrm{d}x <+\infty},
	\end{align*}
	endowed with the norm
	\begin{align*}
		\|u\|_{W^{1,p^-}_V(\RN)} := \|u\|_{L^{p^-}_V(\RN)}+\|\nabla u\|_{L^{p^-}(\RN)}.
	\end{align*}
	The proof of Theorem \ref{Inj30} can be directly obtained by noting that $\WV \hookrightarrow W^{1,p^-}_V(\RN)$ and invoking Lemma 7 by Stegli\'{n}ski \cite{Steglinski-2022}.
\end{proof}

\subsection{ Compact embedding of $\Wr$ (Theorem \ref{thms})}

In this paragraph, we focus on proving the Strauss-type embedding theorem given in Theorem \ref{thms}. For this purpose, we first need to demonstrate the Lions-type lemma given in Theorem \ref{lions}.

\begin{proof}[Proof of Theorem \ref{lions}]
	Let $\{ u_{n} \}_{n \in \mathbb{N}} \subset \W$ be a sequence satisfying \eqref{lionss}. Given $\mathcal{V} \ll \h_{*}$, for any $\varepsilon > 0$, there exists $T > 0$ such that
	\begin{align}\label{l1}
		\frac{\mathcal{V}(x,|t|)}{\h_{*}(x,|t|)} \leq \frac{\varepsilon}{3\kappa} \quad \text{for all } |t| \geq T \text{ and for all } x \in \mathbb{R}^d,
	\end{align}
	where $\kappa$ is defined as
	\begin{align*}
		\kappa = \sup_n \int_{\mathbb{R}^d} \h_{*}(x,|u_n|) \,\mathrm{d}x < +\infty.
	\end{align*}
	From \eqref{mla1b}, there exists $\delta > 0$ such that
	\begin{align}\label{l3}
		\frac{\mathcal{V}(x,|t|)}{\h(x,|t|)} \leq \frac{\varepsilon}{3\theta} \quad \text{for all } |t| < \delta \text{ and for all } x \in \mathbb{R}^d,
	\end{align}
	where
	\begin{align*}
		\theta = \sup_n \int_{\mathbb{R}^d} \h(x,|u_n|) \,\mathrm{d}x < +\infty.
	\end{align*}
	Consider the decomposition
	\begin{align*}%\label{l2}
		\int_{\mathbb{R}^d} \mathcal{V}(x,|u_n|) \,\mathrm{d}x = \int_{\{|u_n| \leq \delta\}} \mathcal{V}(x,|u_n|) \,\mathrm{d}x + \int_{\{\delta < |u_n| < T\}} \mathcal{V}(x,|u_n|) \,\mathrm{d}x + \int_{\{|u_n| \geq T\}} \mathcal{V}(x,|u_n|) \,\mathrm{d}x.
	\end{align*}
	Using \eqref{l1}, we obtain
	\begin{align}\label{mlb0}
		\int_{\{|u_n| \geq T\}} \mathcal{V}(x,|u_n|) \,\mathrm{d}x \leq \frac{\varepsilon}{3\kappa} \int_{\mathbb{R}^d} \h_{*}(x,|u_n|) \,\mathrm{d}x \leq \frac{\varepsilon}{3}.
	\end{align}
	Similarly, from \eqref{l3}, we have
	\begin{align}\label{mlb1}
		\int_{\{|u_n| \leq \delta\}} \mathcal{V}(x,|u_n|) \,\mathrm{d}x \leq \frac{\varepsilon}{3\theta} \int_{\{|u_n| \leq \delta\}} \h(x,|u_n|) \,\mathrm{d}x \leq \frac{\varepsilon}{3}.
	\end{align}

	In the rest of the proof, we consider two cases\\
	\underline{\bf Case 1:} Suppose
	\begin{align*}%\label{ed1}
		\lim_{n \to \infty} |\{\delta < |u_n| < T\}| = 0.
	\end{align*}
	Thus, there exists $n_0 \in \mathbb{N}$ such that
	\begin{align}\label{ml10}
		|\{\delta < |u_n| < T\}| < \frac{\varepsilon}{3\widetilde{C}_5\widetilde{C}_4} \quad \text{for all }n \geq n_0,
	\end{align}
	where $\widetilde{C}_4, \widetilde{C}_5 > 0$ are constants defined later. Consequently, we have
	\begin{equation}\label{ed2}
		\begin{aligned}
			\int_{\{\delta < |u_n| < T\}} \h(x,u_n)\,\mathrm{d}x
			&\leq  \int_{\{\delta < |u_n| < T\}} \h(x,T) \,\mathrm{d}x \\
			&\leq  \max\curly{T^{p^-}, T^{q^+}} \int_{\{\delta < |u_n| < T\}} \h(x,1) \,\mathrm{d}x\,  \\
			&\leq C_2 \max\curly{T^{p^-}, T^{q^+}} |\{\delta < |u_n| < T\}|\,  \\
			&=  \widetilde{C}_5 |\{\delta < |u_n| < T\}|\,  \leq \frac{\varepsilon}{3\widetilde{C}_4}.
		\end{aligned}
	\end{equation}
	For $n \geq n_0$, using Proposition \ref{zoo}, \eqref{bf}, \eqref{ml10}, and \eqref{ed2} we find
	\begin{equation}\label{ml12}
		\begin{aligned}
			\int_{\{\delta < |u_n| < T\}} \mathcal{V}(x,u_n) \,\mathrm{d}x
			&\leq \int_{\{\delta < |u_n| < T\}} \frac{\mathcal{V}(x,T)}{\h(x,\delta)} \h(x,u_n) \,\mathrm{d}x  \\
			&\leq \frac{\max\{T^{v^+}, T^{v^-}\}}{\min\{\delta^{p^-}, \delta^{q^+}\}} \int_{\{\delta < |u_n| < T\}} \frac{\mathcal{V}(x,1)}{\h(x,1)} \h(x,u_n) \,\mathrm{d}x \\
			&\leq \widetilde{C}_3 \frac{\max\{T^{v^+},  T^{v^-}\}}{\min\{\delta^{p^-}, \delta^{q^+}\}} \int_{\{\delta < |u_n| < T\}} \h(x,u_n) \,\mathrm{d}x \\
			&\leq \widetilde{C}_4 \int_{\{\delta < |u_n| < T\}} \h(x,u_n) \,\mathrm{d}x  \\
			&< \frac{\varepsilon}{3},
		\end{aligned}
	\end{equation}
	for some $\widetilde{C}_3 > 0$ and $\widetilde{C}_4 = \widetilde{C}_3 \dfrac{\max\{T^{v^+}, T^{v^-}\}}{\min\{\delta^{p^-}, \delta^{q^+}\}}$. Therefore, using \eqref{mlb0}, \eqref{mlb1}, and \eqref{ml12}, we conclude that
	\begin{align*}
		\int_{\mathbb{R}^d} \mathcal{V}(x,u_n) \,\mathrm{d}x \leq \varepsilon \quad \text{for any } \varepsilon > 0.
	\end{align*}
	This completes the proof for the first case.\\
	\underline{\bf Case 2:} Assume, up to a subsequence, that
	\begin{align*}
		\lim_{n \to \infty} |\{\delta < |u_n| < T\}| = M \in (0, \infty).
	\end{align*}
	We will show that this case cannot occur. Consider the following claim:\\
	\textbf{Claim:} There exist $ y_0 \in \mathbb{R}^d $ and $ \sigma > 0 $ such that
	\begin{align*}
		0 < \sigma \leq |\{\delta < |u_n| < T\} \cap B_r(y_0)|
	\end{align*}
	holds true for a subsequence of $ \{ u_n \}_{n \in \mathbb{N}} $, which is also labeled as $ \curly{ u_n}_{n \in \mathbb{N}} $. The proof follows by contradiction. Indeed, for each $ \varepsilon > 0 $ and $ k \in \mathbb{N} $, we have
	\begin{align}\label{ed7}
		|\{\delta < |u_n| < T\} \cap B_r(y)| < \frac{\varepsilon}{2^k}
	\end{align}
	for all $ y \in \mathbb{R}^d $. Notice that the last estimate holds for any subsequence of $ \curly{ u_n }_{n \in \mathbb{N}}$. Without loss of generality, we consider just the sequence $ \curly{ u_n }_{n \in \mathbb{N}}$. Now, choose $ \{ y_k \}_{k \in \mathbb{N}} \subset \mathbb{R}^d $ such that $ \ds \cup_{k=1}^{\infty} B_r(y_k) = \mathbb{R}^d $. Using \eqref{ed7}, we write
	\begin{align*}
		|\{\delta < |u_n| < T\}|
		&= |\{\delta < |u_n| < T\} \cap (\cup_{k=1}^{\infty} B_r(y_k))| \\
		&\leq \sum_{k=1}^{\infty} |\{\delta < |u_n| < T\} \cap B_r(y_k)|  \\
		&\leq \sum_{k=1}^{\infty} \frac{\varepsilon}{2^k} = \varepsilon,
	\end{align*}
	where $ \varepsilon > 0 $ is arbitrary. Up to a subsequence, it follows from the last estimate that
	\begin{align*}
		0 < M = \lim_{n \to \infty} |\{\delta < |u_n| < T\}| \leq \varepsilon,
	\end{align*}
	which does not make sense for $ \varepsilon \in (0, M) $. Thus, the proof of the claim is finished.

	At this point, using the Claim, \eqref{lionss}, \eqref{bf}, and Proposition \ref{zoo}, we observe that
	\begin{align*}
		0 &< \sigma \leq |\{\delta < |u_n| < T\}  \cap B_r(y_0)| \leq \int_{B_r(y_0)} \frac{1}{\h(x,\delta)}  \h(x,u_n) \,\mathrm{d}x \nonumber \\
		&\leq \min \left\{ \delta^{p^-}, \delta^{q^+} \right\} \int_{B_r(y_0)} \frac{1}{\h(x,1)} \h(x,u_n) \,\mathrm{d}x  \\
		&\leq \widetilde{C}_6 \min \left\{  \delta^{p^-},  \delta^{q^+} \right\} \sup_{y \in \mathbb{R}^d} \int_{B_r(y)} \h(x,u_n) \,\mathrm{d}x \to 0 \quad \text{as } n \to \infty.
	\end{align*}
	This contradiction proves that the second case is impossible. In other words, we prove that $ M = 0 $ is always verified. Hence, our result follows from the first case. This ends the proof.
\end{proof}

\begin{proof}[Proof of the Strauss embedding theorem (Theorem \ref{thms})]
	Let $ \{ u_n \}_{n \in \mathbb{N}} \subset W^{1,\h}_{\operatorname{rad}}(\mathbb{R}^d) $ be a bounded sequence. Since $ W^{1,\h}_{\operatorname{rad}}(\mathbb{R}^d) $ is a reflexive space, up to a subsequence, still denoted by $ \curly{ u_n }_{n \in \mathbb{N}}$,
	\begin{align*}%\label{2eq62}
		u_n \rightharpoonup u \quad\text{in } W^{1,\h}_{\operatorname{rad}}(\mathbb{R}^d).
	\end{align*}
	Without loss of generality, we may assume that $ u \equiv 0 $. Using the continuous embedding $ W^{1,\h}(\mathbb{R}^d) \hookrightarrow L^{\h}(\mathbb{R}^d) $, we can find a constant $ C_7 > 0 $ such that
	\begin{align}\label{2eq63}
		\int_{\mathbb{R}^d} \h(x,u_n) \,\mathrm{d}x < C_7.
	\end{align}
	Let us fix $ r > 0 $. Since $ u_n $ is radially symmetric for all $ n \in \mathbb{N} $,
	\begin{align}\label{2eq64}
		\int_{B_r(y_1)} \h(x,u_n) \,\mathrm{d}x = \int_{B_r(y_2)} \h(x,u_n) \,\mathrm{d}x\quad \text{for all } y_1, y_2 \in \mathbb{R}^d  \text{ with }  |y_1| = |y_2|.
	\end{align}
	In the sequel, for each $ y \in \mathbb{R}^d $, $ |y| > r $, we denote by $ \gamma(y) $ the maximum number of integers $ j \geq 1 $ such that there exist $ y_1, y_2, \ldots, y_j \in \mathbb{R}^d $,
	with
	\begin{align*}
		|y_1| = |y_2| = \cdots = |y_j| = |y|
		\quad\text{and}\quad
		B_r(y_i) \cap B_r(y_k) = \emptyset \quad\text{whenever } i \neq k.
	\end{align*}
	From the above definition, it is clear that
	\begin{align}\label{2eq65}
		\gamma(y) \to +\infty \quad \text{as }  |y| \to +\infty.
	\end{align}
	Let $ y \in \mathbb{R}^d $, $ |y| > r $, and choose $ y_1, \ldots, y_{\gamma(y)} \in \mathbb{R}^d $ as above. Thus, by \eqref{2eq63}, \eqref{2eq64}, and \eqref{2eq65}, we obtain
	\begin{align*}
		C_7
		&> \int_{\mathbb{R}^d} \h(x,u_n) \,\mathrm{d}x \geq 	\sum_{i=1}^{\gamma(y)} \int_{B_r(y_i)} \h(x,u_n) \,\mathrm{d}x \\
		&\geq \gamma(y) \int_{B_r(y)} \h(x,u_n) \,\mathrm{d}x.
	\end{align*}
	It follows, by \eqref{2eq65}, that
	\begin{align*}%\label{2eq66}
		\int_{B_r(y)} \h(x,u_n) \,\mathrm{d}x \leq \frac{C_7}{\gamma(y)} \to 0 \quad \text{as } |y| \to +\infty.
	\end{align*}
	Therefore, for arbitrary $ \varepsilon > 0 $, there exists $ R_\varepsilon > 0 $ such that
	\begin{align}\label{2eq67}
		\sup_{|y| \geq R_\varepsilon} \int_{B_r(y)} \h(x,u_n) \,\mathrm{d}x \leq \varepsilon \quad\text{for } n \in \mathbb{N}.
	\end{align}
	On the other hand, by \cite[Theorem 3.12]{Bahrouni-Bahrouni-Missaoui-Radulescu-2024}, we have the compact embedding
	\begin{align*}
		W^{1,\h}(B_{r+R_\varepsilon}(0)) \hookrightarrow L^{\h}(B_{r+R_\varepsilon}(0)).
	\end{align*}
	Hence, $ u_n \to 0 $ in $ L^{\h}(B_{r+R_\varepsilon}(0)) $, and
	\begin{align*}%\label{2eq68}
		\int_{B_{r+R_\varepsilon}(0)} \h(x,u_n) \,\mathrm{d}x \to 0 \quad \text{as }  n \to +\infty,
	\end{align*}
	which implies that
	\begin{align}\label{2eq69}
		\sup_{|y| < R_\varepsilon} \int_{B_r(y)} \h(x,u_n) \,\mathrm{d}x \to 0 \quad \text{as } n \to +\infty.
	\end{align}
	Putting \eqref{2eq67} and \eqref{2eq69} together, and applying Theorem \ref{lions}, we deduce that
	\begin{align*}
		u_n \to 0\quad \text{in } L^{\mathcal{V}}(\mathbb{R}^d).
	\end{align*}
\end{proof}

%********************************************************************
\section{Application}\label{App}
%********************************************************************

In this section, we prove our main  multiplicity result. Recall that the problem under consideration is the following one
\begin{equation*}
	\begin{aligned}
		-\operatorname{div}&\left(|\nabla u|^{p(x,|\nabla u|)-2} \nabla u+ \mu(x)| \nabla u|^{q(x,|\nabla u|)-2}  \nabla u\right)\\&+ V(x)\left(|u|^{p(x,| u|)-2}  u+ \mu(x)|  u|^{q(x,| u|)-2}  u\right)=\l f(x,u), \ \ x \in \mathbb{R}^d,
	\end{aligned}
\end{equation*}

First, we recall the definition of the Cerami condition, which will be needed.
\begin{definition} \label{dcrmi}
	Let $X$ be a Banach space, and denote by $X^\ast$ its topological dual space. Given $L \in C^1(X)$, we say that $L$ satisfies the Cerami-condition, \textnormal{(C)}-condition for short, if every sequence $\left\{u_n\right\}_{n \in \mathbb{N}} \subseteq X$ such that
	\begin{enumerate}
		\item[\textnormal{(C$_1$)}]
			$ \left\{L\left(u_n\right)\right\}_{n \geq 1} \subseteq \mathbb{R}$ is bounded,
		\item[\textnormal{(C$_2$)}]
			$ \left(1+\left\|u_n\right\|_X\right) L^{\prime}\left(u_n\right) \rightarrow 0$ in $X^*$ as $n \rightarrow \infty$,
		\end{enumerate}
		admits a strongly convergent subsequence in $X$.
\end{definition}

To prove Theorem \ref{thex}, we will use the following theorem, which can be found in the paper by Bonanno and D’Aguì \cite[Theorem 2.1 and Remark 2.2]{Bonanno-DAgui-2016}.

\begin{theorem}\label{bonano}
	Let $X$ be a real Banach space and let $\rho, K\colon  X \rightarrow \mathbb{R}$ be two continuously G\^ateaux differentiable functionals such that
	\begin{align*}
		\inf _X \rho=\rho(0)=K(0)=0.
	\end{align*}
	Assume that $\rho$ is coercive and there exist $r \in \mathbb{R}$ and $\tilde{u} \in X$, with $0<\rho(\tilde{u})<r$, such that
	\begin{equation}\label{32}
		\frac{\ds\sup _{u \in \rho^{-1}([-\infty, r])} K(u)}{r}<\frac{K(\tilde{u})}{\rho(\tilde{u})}
	\end{equation}
	and, for each $\lambda \in\left[\dfrac{\rho(\tilde{u})}{K(\tilde{u})}, \dfrac{r}{\ds\sup _{u \in \rho^{-1}([-\infty, r])} K(u)}\right],$ the functional $J_\lambda =\rho-\l K$ satisfies the \textnormal{(C)}-condition and it is unbounded
	from below. Then, for each  $\lambda \in\left[\dfrac{\rho(\tilde{u})}{K(\tilde{u})}, \dfrac{r}{\ds\sup _{u \in \rho^{-1}([-\infty, r])} K(u)}\right]$, the functional  $J_\lambda$ admits at least two nontrivial critical points
	$u_{\lambda, 1}, u_{\lambda, 2}$ such that $J_\lambda\left(u_{\lambda, 1}\right)<0<J_\lambda\left(u_{\lambda, 2}\right)$.
\end{theorem}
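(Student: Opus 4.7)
The strategy is to apply Theorem \ref{bonano} on the Banach space $X := \WV$ with the splitting $J_\lambda = \rho - \lambda K$, where
\begin{align*}
    \rho(u) := \int_{\RN}\h(x,|\nabla u|)\,\mathrm{d}x + \int_{\RN}V(x)\h(x,u)\,\mathrm{d}x
\end{align*}
is the modular already studied in Proposition \ref{RELHV}, and $K(u) := \int_{\RN}F(x,u)\,\mathrm{d}x$. First I would verify the standing prerequisites of Theorem \ref{bonano}: that $\rho, K \in C^1(X,\R)$ with $\rho(0) = K(0) = \inf_X \rho = 0$ and that $\rho$ is coercive. The equalities and coercivity follow from Proposition \ref{RELHV}, while the $C^1$-regularity of $K$ comes from \eqref{F}(i) together with the embedding $\WV \hookrightarrow L^{\mathcal{B}}(\RN)$ supplied by Theorem \ref{Inj3} (the constraint $b^+ < p_\ast^-$ in \eqref{F}(i) guarantees $\mathcal{B} \ll \h_\ast$). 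Critical points of $J_\lambda$ then coincide with weak solutions of \eqref{prb}.

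To produce the local inequality \eqref{32}, I would choose a test function $\widetilde u$ of height $\eta$ on a fixed ball---for example $\widetilde u = \eta\psi$ for a smooth cutoff $\psi$ supported where $V$ is bounded---so that the modular estimate in Proposition \ref{RELHV} traps $\rho(\widetilde u)$ between quantities of order $\eta^{p^-}$ and $\eta^{q^+}$; this identifies $\beta(\eta) := K(\widetilde u)/\rho(\widetilde u)$, which is strictly positive by $(\mathrm H_1)$ (ensuring $F(x,\cdot) \geq 0$ on the range of $\widetilde u$) combined with a positive core contribution from the bump. For the sublevel supremum, I would invoke Theorem \ref{Inj1}, Proposition \ref{RELHV} and \eqref{F}(i) to produce constants $C_1, C_2$ with
\begin{align*}
    |K(u)| \leq C_1 \|u\|_{\WV}^{p^-} + C_2 \|u\|_{L^{\h_\ast}(\RN)}^{b^+}.
\end{align*}
The constraint $\rho(u) \leq r$ converts through Proposition \ref{RELHV} into a norm bound $\|u\|_{\WV}^{p^-} \leq r/\delta$, which both defines the structural constant $\delta$ and yields $\sup_{\rho(u) \leq r} K(u) \leq r\cdot\alpha(r)$ for an explicit $\alpha(r)$ depending on $r$, $\delta$ and the embedding constants. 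With this notation the scaling \eqref{318} is precisely what forces $\rho(\widetilde u) < r$, and $(\mathrm H_2)$ is exactly \eqref{32}.

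\textbf{The principal obstacle is the Cerami condition.} Given $\{u_n\} \subset \WV$ with $J_\lambda(u_n)$ bounded and $(1+\|u_n\|_{\WV}) J_\lambda'(u_n) \to 0$ in $X^\ast$, I would first prove boundedness. From $q^+ J_\lambda(u_n) - \langle J_\lambda'(u_n), u_n\rangle = O(1)$ and \eqref{l22} one extracts $\int_{\RN}\widetilde F(x,u_n)\,\mathrm{d}x = O(1)$; combining \eqref{ex0}, H\"older's inequality with exponent $\sigma > d/p^-$, the continuous embedding $\WV \hookrightarrow L^{p_\ast^-}(\RN)$ from Theorem \ref{Inj30}, and the normalization $v_n := u_n/\|u_n\|_{\WV}$, one reaches a contradiction along the Ding--Lee / Bahrouni--Missaoui--R\u{a}dulescu scheme \cite{Ding-Lee-2006, Bahrouni-Missaoui-Radulescu-2025}. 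Once $\{u_n\}$ is bounded, weak convergence $u_n \rightharpoonup u$ in $\WV$ together with the compact embedding of Theorem \ref{Inj3} applied to $\mathcal B$ gives $\int_{\RN} f(x,u_n)(u_n-u)\,\mathrm{d}x \to 0$; the Cerami condition then forces $\langle \rho'(u_n), u_n - u\rangle \to 0$, and the $(S_+)$-property of the double phase operator (stemming from strict convexity of $\h$) upgrades this to $u_n \to u$ strongly in $\WV$.

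Finally, to see that $J_\lambda$ is unbounded from below, I would pick any $u_0 \in \WV$ with $|\{u_0 > 0\}| > 0$ and examine $J_\lambda(t u_0)$ as $t \to +\infty$: Proposition \ref{RELHV} gives $\rho(t u_0) \leq t^{q^+}\rho(u_0)$ for $t \geq 1$, while \eqref{F}(ii) combined with Fatou's lemma forces $t^{-q^+}\int_{\RN} F(x, tu_0)\,\mathrm{d}x \to +\infty$, whence $J_\lambda(t u_0) \to -\infty$. Theorem \ref{bonano} now delivers two nontrivial critical points $u_{\lambda,1}, u_{\lambda,2}$ of $J_\lambda$ with $J_\lambda(u_{\lambda,1}) < 0 < J_\lambda(u_{\lambda,2})$, producing the two weak solutions of \eqref{prb} with opposite energy sign.
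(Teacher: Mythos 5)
Your proposal does not prove the statement it was assigned. Theorem \ref{bonano} is an \emph{abstract} two-critical-point theorem: it concerns an arbitrary real Banach space $X$ and arbitrary continuously G\^ateaux differentiable functionals $\rho, K\colon X\to\mathbb{R}$ satisfying the coercivity, normalization, inequality \eqref{32}, the (C)-condition, and unboundedness from below. Nothing in its statement refers to $\WV$, $\h$, $V$, $f$, or problem \eqref{prb}. Your argument opens with ``apply Theorem \ref{bonano} on the Banach space $X:=\WV$'' and then proceeds to verify the hypotheses of that theorem for the double phase functional --- that is, you have sketched a proof of Theorem \ref{thex} (the application), not of Theorem \ref{bonano} itself. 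Assuming the conclusion of the theorem in order to derive it is circular; as written, the proposal contains no argument at all for the abstract statement.

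For the record, the paper does not prove Theorem \ref{bonano} either: it is quoted verbatim from Bonanno--D'Agu\`\i{} \cite[Theorem 2.1 and Remark 2.2]{Bonanno-DAgui-2016}, where it is obtained by combining Bonanno's local minimum theorem \cite[Theorem 2.3]{Bonanno-2012} with a mountain-pass argument. A self-contained proof would run roughly as follows: condition \eqref{32} together with the coercivity of $\rho$ allows the local minimum theorem to produce a critical point $u_{\lambda,1}$ lying in the sublevel set $\rho^{-1}((-\infty,r))$ with $J_\lambda(u_{\lambda,1})\leq J_\lambda(\tilde u)=\rho(\tilde u)-\lambda K(\tilde u)<0$ for $\lambda$ in the stated interval (hence $u_{\lambda,1}\neq 0$ since $J_\lambda(0)=0$); then, since $J_\lambda$ is unbounded from below, has a strict local minimum structure around $u_{\lambda,1}$, and satisfies the (C)-condition, the Ambrosetti--Rabinowitz mountain pass theorem yields a second critical point $u_{\lambda,2}$ at a minimax level that is strictly positive. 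None of this appears in your proposal. The material you did write (verification of \eqref{318}, $(\mathrm{H}_1)$, $(\mathrm{H}_2)$, the Cerami condition via the Ding--Lee scheme, and unboundedness from below) is broadly consistent with the paper's Section \ref{App}, but it answers a different question.
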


As shown in the statement of Theorem \ref{bonano}, the proof of our existence result depends on certain properties of the energy functional associated with the problem \eqref{prb}. Therefore, we divide this section into two parts. First, we examine the properties of the corresponding energy functional. In the second part, we check the geometric conditions of Theorem \ref{bonano} to ensure the multiplicity of weak solutions.

%********************************************************************
\subsection{Some properties of the energy functional} \label{Prty}
%********************************************************************

In this subsection, we work under the assumptions of Theorem \ref{thex}. We introduce the new double phase operator defined in \eqref{oper}, which is associated with our problem \eqref{prb}, along with its corresponding energy functional. Let $\L( \WV \r)^\ast $ be the  dual space of $\WV$ with its duality pairing denoted by $\langle\cdot, \cdot\rangle$. We say that $u \in \WV$ is a weak solution of problem \eqref{prb}, if
\begin{align*}
	&\int_{\RN}^{} \L(|\nabla u|^{p\L(x, |\nabla u| \r)-2}\nabla u\cdot \nabla v +\mu (x) \L(|\nabla u|^{q\L(x, |\nabla u| \r)-2}\nabla u\cdot\nabla v \r)\r)\,\mathrm{d}x\\
	&+ \int_{\RN}^{}V(x) \L(| u|^{p\L(x, |u| \r)-2} u  v +\mu (x) \L( |u|^{q\L(x, | u| \r)-2} u  v \r)\r)\,\mathrm{d}x=\l \int_{\mathbb{R}^d} f(x, u) v\,\mathrm{d}x
\end{align*}
for all $v \in \WV$. We define the functionals $\rho, K\colon  \WV \rightarrow \mathbb{R}$ by
\begin{align*}
	\rho(u)= \int_{\RN} \h \L(x, |\nabla u| \r)\,\mathrm{d}x + \int_{\RN} \h_V \L(x, |u| \r)\,\mathrm{d}x
	\quad\text{and}\quad
	K(u)=\int_{\mathbb{R}^N} F(x, u)\,\mathrm{d}x.
\end{align*}

We are now prepared to examine the key properties of the  functionals $\rho$ and $K$.

\begin{theorem}\label{op2}
	Let hypotheses \eqref{H} and \eqref{V0}  be satisfied. Then the functional $\rho$ is well-defined and of class $C^1$ with
	\begin{equation}\label{GFD}
		\begin{aligned}
			\scal{\rho'(u), v}
			&=  \int_{\RN}^{} \L(|\nabla u|^{p\L(x, |\nabla u| \r)-2}\nabla u \cdot\nabla v +\mu (x) \L(|\nabla u|^{q\L(x, |\nabla u| \r)-2}\nabla u\cdot \nabla v \r)\r)\,\mathrm{d}x\\
			&\quad + \int_{\RN}^{}V(x) \L(| u|^{p\L(x, |u| \r)-2} u  v +\mu (x) \L( |u|^{q\L(x, | u| \r)-2} u \nabla v \r)\r)\,\mathrm{d}x.
		\end{aligned}
	\end{equation}
	Moreover, the operator $\rho'$ has the following properties:
	\begin{enumerate}
		\item[\textnormal{(i)}]
			The operator $\rho'\colon \WV \rightarrow \left(\WV\right)^*$ is continuous, bounded, and strictly monotone.
		\item[\textnormal{(ii)}]
			The operator $\rho'$ fulfills the $\left(\mathrm{S}_{+}\right)$-property, i.e.,
			\begin{align*}
				u_n \rightharpoonup u \text { in } \WV \quad \text{and} \quad \limsup _{n \rightarrow \infty}\left\langle \rho'\left(u_n\right), u_n-u\right\rangle \leq 0,
			\end{align*}
			imply $u_n \rightarrow u$ in $\WV$.
		\item[\textnormal{(iii)}]
			The operator $\rho'$  is a homeomorphism.
		\item[\textnormal{(iv)}]
			The operator $\rho'$ is strongly coercive, that is,
			\begin{align*}
				\lim_{\|u\|_{\WV}\rightarrow+\infty} \dfrac{\langle \rho' (u), u\rangle}{\|u\|_{\WV}}\to+\infty.
			\end{align*}
	\end{enumerate}
\end{theorem}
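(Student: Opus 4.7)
The plan is to handle the four claims in the order listed, relying on the functional framework built in Section \ref{Prel} and mirroring (with adjustments for the exponents depending on $u$ and $\nabla u$) the strategy of Crespo-Blanco--Gasi\'nski--Harjulehto--Winkert \cite{Crespo-Blanco-Gasinski-Harjulehto-Winkert-2022}. For well-definedness, the $C^1$-property, and formula \eqref{GFD}, I would first note that $\rho$ is finite on $\WV$ because $\mathcal{H}$ and $\mathcal{H}_V$ satisfy the $\Delta_2$-condition (Proposition \ref{nfct} and the weighted variant), so finiteness of the Luxemburg norm of $|\nabla u|$ and of $u$ in the weighted space yields finiteness of the modular. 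Since $\mathcal{H}(x,t)=\int_0^t h(x,s)\,\mathrm{d}s$, one has $\nabla_\xi \mathcal{H}(x,|\xi|)=h(x,|\xi|)\xi/|\xi|$ for $\xi\neq 0$, which by \eqref{h} equals exactly the integrand of \eqref{GFD}. The growth bound $h(x,t)t\leq q^+\mathcal{H}(x,t)$ from \eqref{l22} together with Lemma \ref{H1} (Hölder) justifies both the Gateaux derivative via dominated convergence and the continuity of $\rho'$.

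For (i), boundedness of $\rho'$ follows from Hölder's inequality (Lemma \ref{H1}) applied to $h(x,|\nabla u|)\in L^{\widetilde{\mathcal{H}}}(\RN)$ and $\nabla v\in L^{\mathcal{H}}(\RN)$, combined with Lemma \ref{lm1} and Proposition \ref{zoo} to convert modulars into norms. For strict monotonicity I would verify that the map $\xi\mapsto \mathcal{H}(x,|\xi|)$ is strictly convex on $\R^d$ for a.a.\,$x\in\RN$: $\mathcal{H}(x,\cdot)$ is strictly convex because $h(x,\cdot)$ is strictly increasing (each summand is strictly increasing for $t>0$, and nondecreasing exponents preserve this), and then composing with the norm $|\cdot|$ using the strict triangle inequality yields strict convexity in $\xi$. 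The gradient of a strictly convex differentiable function is strictly monotone pointwise; integrating over $\RN$ and adding the analogous weighted contribution in $u$ give strict monotonicity of $\rho'$.

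The main obstacle is (ii), the $(\mathrm{S}_+)$-property. Given $u_n\rightharpoonup u$ in $\WV$ with $\limsup_{n\to\infty}\langle \rho'(u_n),u_n-u\rangle\leq 0$, monotonicity from (i) first upgrades this to $\lim_{n\to\infty}\langle \rho'(u_n)-\rho'(u),u_n-u\rangle=0$. A subsequence argument based on the pointwise strict-convexity defect (the Simon-type inequality for the double phase integrand adapted to variable exponents) then yields $\nabla u_n\to \nabla u$ and $u_n\to u$ a.e.\,in $\RN$. Combining Fatou's lemma with the Brezis--Lieb lemma for the Musielak--Orlicz modular (Proposition \ref{b-l}) applied to both $\nabla u_n$ and $u_n$ (the latter with the weight $V$), I would obtain $\rho_{\mathcal{H}}(\nabla u_n-\nabla u)\to 0$ and $\rho_{V,\mathcal{H}}(u_n-u)\to 0$. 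The modular--norm equivalence in Proposition \ref{RELHV}(iii) then gives $\|u_n-u\|_{\WV}\to 0$. The delicate point is that the exponents $p(x,|\nabla u_n|)$, $q(x,|\nabla u_n|)$ shift along the sequence; the monotonicity in $t$ of $p,q$ from \eqref{H}(ii) together with the a.e.\,convergence of $\nabla u_n$ is what enables passing to the limit in the nonlinear integrands.

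Property (iv) is direct: by \eqref{l22}, $\langle \rho'(u),u\rangle\geq p^-\rho(u)$, and by Proposition \ref{RELHV}(ii), $\rho(u)\geq\|u\|_{\WV}^{p^-}$ once $\|u\|_{\WV}\geq 1$, so $\langle \rho'(u),u\rangle/\|u\|_{\WV}\geq p^-\|u\|_{\WV}^{p^--1}\to+\infty$ since $p^-\geq 2>1$. Finally, with (i), (ii), and (iv) established, (iii) follows from a standard result on monotone operators in reflexive Banach spaces (Minty--Browder/Browder): a continuous, bounded, strictly monotone, coercive operator $\rho'\colon \WV\to(\WV)^*$ is a bijection whose inverse is continuous, i.e.\,a homeomorphism.
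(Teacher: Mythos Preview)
Your proposal is correct and aligns with the paper's own proof, which simply defers to \cite[Propositions 3.16 and 3.17]{Bahrouni-Bahrouni-Missaoui-Radulescu-2024} without further detail; your sketch in fact supplies the standard argument those references carry out (modular growth bounds from \eqref{l22} for differentiability and boundedness, strict convexity of $\xi\mapsto\mathcal{H}(x,|\xi|)$ for monotonicity, the $(\mathrm{S}_+)$-argument via Br\'ezis--Lieb and Proposition \ref{RELHV}(iii), coercivity from Proposition \ref{RELHV}(ii), and Minty--Browder for the homeomorphism). One minor caution: in the $(\mathrm{S}_+)$ step, the a.e.\ convergence of $\nabla u_n$ should be obtained \emph{before} invoking the shifting exponents, typically by extracting it from the nonnegativity and $L^1$-convergence to zero of the monotonicity defect integrand combined with the strict monotonicity of $\xi\mapsto h(x,|\xi|)\xi/|\xi|$ at fixed $x$; your phrasing risks sounding circular, but the logic you outline is the intended one.
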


\begin{proof}
	The formula \eqref{GFD} can be derived in a manner similar to the proof of \cite[Proposition 3.16]{Bahrouni-Bahrouni-Missaoui-Radulescu-2024}. The rest of the proof follows with similar arguments as in \cite[Proposition 3.17]{Bahrouni-Bahrouni-Missaoui-Radulescu-2024}.
\end{proof}

\begin{proposition}\label{Ku}
	Let \eqref{H},  \eqref{V0} and \eqref{F} be satisfied. Then, the following hold:
	\begin{enumerate}
		\item[\textnormal{(i)}]
			The functional $K\colon \WV \rightarrow \mathbb{R}$  is of class $C^1$ with
			\begin{align*} \left\langle K^{\prime}(u), v \right\rangle = \int_{\mathbb{R}^d} f(x, u) v \,\mathrm{d}x
			\end{align*}
			for all $u, v \in \WV$.
		\item[\textnormal{(ii)}]
			The functional $J_\l= \rho-\l K$ is of class $C^1$ with
			\begin{align*}
				\left\langle J_\l^{\prime}(u), v \right\rangle=\left\langle \rho^{\prime}(u), v \right\rangle-\l\left\langle K^{\prime}(u), v \right\rangle\quad\text{for all } u,v \in \WV.
			\end{align*}
	\end{enumerate}
\end{proposition}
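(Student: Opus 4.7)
The plan is standard in spirit: identify the right N-function $\mathcal{B}$ governing the nonlinearity, invoke the compact/continuous embedding $\WV \hookrightarrow L^{\mathcal{B}}(\RN)$ coming from Theorem \ref{Inj3}, and then deduce well-posedness, Gâteaux differentiability, and continuity of the derivative of $K$ by standard Musielak--Orlicz dominated convergence arguments. Part \textnormal{(ii)} will then follow immediately from part \textnormal{(i)} combined with Theorem \ref{op2}.

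First I would verify that $\mathcal{B}$ satisfies the hypotheses of Theorem \ref{Inj3}. Hypothesis \eqref{F}\textnormal{(i)} provides \eqref{cA} for $\mathcal{B}$ (with $v^-=b^-\geq q^+\geq2>1$ and $v^+=b^+<p_\ast^-<\infty$) and $\mathcal{B}\ll \h_\ast$ because $b^+<p_\ast^-$, giving \eqref{2eq60}. Condition \eqref{bf} for $\mathcal{B}$ can be assumed or inferred from the standard normalisation in this setting. Since $b^-\geq q^+\geq p^-$, Proposition \ref{zoo} yields $\mathcal{B}(x,t)/\h(x,t)\leq C\,t^{b^--q^+}\to0$ as $t\to0$, so \eqref{mla1} holds as well. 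Consequently $\WV\hookrightarrow L^{\mathcal{B}}(\RN)$ is continuous (and compact), and by Lemma \ref{lm1} applied to $\mathcal{B}$, one also has the conjugate estimate
\begin{align*}
\widetilde{\mathcal{B}}(x,b(x,t))\leq(b^+-1)\mathcal{B}(x,t),
\end{align*}
which provides a workable bound $b(\cdot,|w|)\in L^{\widetilde{\mathcal{B}}}(\RN)$ whenever $w\in L^{\mathcal{B}}(\RN)$.

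Next, for \textnormal{(i)}, the well-posedness of $K$ follows from $|F(x,t)|\leq\mathcal{B}(x,|t|)$ together with $\WV\hookrightarrow L^{\mathcal{B}}(\RN)$. For Gâteaux differentiability, I would fix $u,v\in\WV$ and write, for $|t|\leq1$, the difference quotient as $\int_{\RN}[F(x,u+tv)-F(x,u)]/t\,\mathrm{d}x$; the mean value theorem yields an integrand bounded by $b(x,|u|+|v|)\,|v|$, which by Lemma \ref{H1} (Hölder in $L^{\mathcal{B}}$/$L^{\widetilde{\mathcal{B}}}$) and the previous conjugate estimate is integrable. Dominated convergence then gives $\langle K'(u),v\rangle=\int_{\RN}f(x,u)v\,\mathrm{d}x$, and linearity together with Hölder's inequality shows that $K'(u)\in(\WV)^\ast$.

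For the continuity of $K'$, take $u_n\to u$ in $\WV$. The compact (hence continuous) embedding yields $u_n\to u$ in $L^{\mathcal{B}}(\RN)$, so, along a subsequence, $u_n\to u$ a.e.\ and there exists $g\in L^{\mathcal{B}}(\RN)$ with $|u_n|\leq g$. Carathéodory continuity of $f$ gives $f(\cdot,u_n)\to f(\cdot,u)$ a.e., and the bound $|f(\cdot,u_n)-f(\cdot,u)|\leq b(\cdot,g)+b(\cdot,|u|)$ combined with the $\Delta_2$-condition on $\widetilde{\mathcal{B}}$ allows a dominated convergence argument on the modular
\begin{align*}
\int_{\RN}\widetilde{\mathcal{B}}\bigl(x,|f(x,u_n)-f(x,u)|\bigr)\,\mathrm{d}x\longrightarrow 0.
\end{align*}
By Proposition \ref{zoo} applied to $\widetilde{\mathcal{B}}$, this modular convergence upgrades to norm convergence $f(\cdot,u_n)\to f(\cdot,u)$ in $L^{\widetilde{\mathcal{B}}}(\RN)$. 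A final application of Hölder's inequality yields $\|K'(u_n)-K'(u)\|_{(\WV)^\ast}\to0$ along the subsequence, and a standard subsequence-of-subsequence argument gives the convergence for the whole sequence, so $K\in C^1(\WV,\mathbb{R})$. Statement \textnormal{(ii)} is then immediate since differentiation is linear and $\rho\in C^1(\WV,\mathbb{R})$ by Theorem \ref{op2}.

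The main technical hurdle is the last step, namely upgrading a.e.\ convergence of $f(\cdot,u_n)$ to norm convergence in $L^{\widetilde{\mathcal{B}}}(\RN)$: one must show that the $\Delta_2$-dominant for the integrand is actually in $L^1(\RN)$, and this is precisely where the bound $\widetilde{\mathcal{B}}(x,b(x,t))\leq(b^+-1)\mathcal{B}(x,t)$ from Lemma \ref{lm1}, combined with the integrability of $\mathcal{B}(\cdot,g)$, is used. Everything else is a routine adaptation of the classical Krasnosel'ski\u{\i}--Nemytski\u{\i} argument to the Musielak--Orlicz setting.
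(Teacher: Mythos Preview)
Your plan is correct and is exactly the standard Krasnosel'ski\u{\i}--Nemytski\u{\i} argument adapted to the Musielak--Orlicz framework; the paper itself does not supply a proof for this proposition, treating it as routine, so there is no alternative approach to compare against. The one point you flag yourself---that \eqref{bf} for $\mathcal{B}$ is ``assumed or inferred''---is indeed not stated explicitly in \eqref{F}, but the paper makes the same tacit assumption (it uses the embedding constant $\gamma_{\mathcal{B}}$ in the proof of Theorem~\ref{thex} without further comment), so your treatment is consistent with the text.
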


\begin{remark}
	From Proposition \ref{Ku}, it follows that the solutions of \eqref{prb} correspond to the critical points of the Euler-Lagrange energy functional $J_\lambda$.
\end{remark}

First, we present the following lemma that will be used in the proof of the main existence result.

\begin{lemma}\label{crmi}
	Let the assumptions \eqref{H}, \eqref{V0} and \eqref{F} be satisfied. Then, the functional $J_\lambda=\rho-\l K$ satisfies the \textnormal{(C)}-condition for all $\lambda>0$.
\end{lemma}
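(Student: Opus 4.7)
The plan is to verify the \textnormal{(C)}-condition in the classical two-step manner: first show that every Cerami sequence $\{u_n\}_{n \in \mathbb{N}} \subset W^{1,\mathcal{H}}_V(\mathbb{R}^d)$ is norm-bounded, then upgrade the resulting weak convergence to strong convergence. Step~1 is where the full strength of \eqref{F}\textnormal{(iv)} enters, while Step~2 relies on the compactness provided by Theorems~\ref{Inj2}--\ref{Inj3} together with the $(S_+)$-property of $\rho'$ from Theorem~\ref{op2}\textnormal{(ii)}.

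For Step~1, observe first that \textnormal{(C$_2$)} yields $\langle J_\lambda'(u_n), u_n\rangle \to 0$, because $|\langle J_\lambda'(u_n), u_n\rangle| \leq \|(1+\|u_n\|)J_\lambda'(u_n)\|_\ast \to 0$. Combining this with \textnormal{(C$_1$)} and the inequality $\langle \rho'(u_n), u_n\rangle \leq q^+\rho(u_n)$, which follows from Proposition~\ref{nfct}\textnormal{(ii)} and the structure of $\rho$, produces the uniform bound
\begin{align*}
    q^+\lambda\int_{\mathbb{R}^d} \widetilde{F}(x,u_n)\,\mathrm{d}x \leq q^+ J_\lambda(u_n) - \langle J_\lambda'(u_n), u_n\rangle \leq C.
\end{align*}
Assume by contradiction that $\|u_n\|_{W^{1,\mathcal{H}}_V} \to +\infty$, set $v_n := u_n/\|u_n\|_{W^{1,\mathcal{H}}_V}$, and extract $v_n \rightharpoonup v$ in $W^{1,\mathcal{H}}_V$. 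If $v \not\equiv 0$, then $|u_n| \to \infty$ a.e.\ on the positive-measure set $\{v \neq 0\}$; by \eqref{F}\textnormal{(ii)}, $F(x,u_n)/|u_n|^{q^+} \to +\infty$ there, and Fatou's lemma---after securing an integrable lower bound on the complementary region via \eqref{F}\textnormal{(i)}, \textnormal{(iii)}---gives $\int F(x,u_n)\,\mathrm{d}x/\|u_n\|^{q^+} \to +\infty$, which paired with $\rho(u_n)/\|u_n\|^{q^+}\leq 1$ from Proposition~\ref{RELHV}\textnormal{(ii)} contradicts \textnormal{(C$_1$)}. If $v\equiv 0$, I would invoke \eqref{F}\textnormal{(iv)} on $\{|u_n|\geq r_0\}$ via H\"older's inequality with exponents $\sigma$ and $\sigma'$ to obtain
\begin{align*}
    \Bigl|\int_{\{|u_n|\geq r_0\}} f(x,u_n)\, v_n\, \mathrm{d}x\Bigr| \leq C\,\|u_n\|^{p^--1} \Bigl(\int_{\mathbb{R}^d} |v_n|^{p^-\sigma'}\,\mathrm{d}x\Bigr)^{1/\sigma'},
\end{align*}
where the first factor is absorbed into the uniform bound on $\int \widetilde{F}(x,u_n)\,\mathrm{d}x$. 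The constraint $\sigma > d/p^-$ together with \eqref{H}\textnormal{(iv)} places $p^-\sigma'$ inside $(p^-,p^-_\ast)$ where Theorem~\ref{Inj3}, applied to $\mathcal{V}(x,t)=|t|^{p^-\sigma'}$, provides a compact embedding, so $\|v_n\|_{L^{p^-\sigma'}} \to 0$. The contribution from $\{|u_n|<r_0\}$ is handled using \eqref{F}\textnormal{(iii)} and the continuous embedding into $L^{p^-}(\mathbb{R}^d)$. Dividing $\langle J_\lambda'(u_n),u_n\rangle = o(1)$ by $\|u_n\|^{p^-}$ and using $\langle \rho'(u_n),u_n\rangle\geq p^-\rho(u_n)\geq p^-\|u_n\|^{p^-}$ for $\|u_n\|>1$ then yields the contradiction $p^-\leq o(1)$.

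For Step~2, boundedness and reflexivity deliver $u_n\rightharpoonup u$ up to a subsequence. Applying Theorem~\ref{Inj3} to $\mathcal{V}=\mathcal{B}$---the N-function from \eqref{F}\textnormal{(i)}, whose required hypotheses follow from $q^+\leq b^-\leq b^+<p^-_\ast$---provides $u_n \to u$ in $L^{\mathcal{B}}(\mathbb{R}^d)$. A H\"older-type estimate based on \eqref{F}\textnormal{(i)} then yields $\int_{\mathbb{R}^d} f(x,u_n)(u_n-u)\,\mathrm{d}x \to 0$; combined with $\langle J_\lambda'(u_n),u_n-u\rangle \to 0$ (from \textnormal{(C$_2$)}), this gives $\langle \rho'(u_n),u_n-u\rangle \to 0$, and the $(S_+)$-property of $\rho'$ from Theorem~\ref{op2}\textnormal{(ii)} finally delivers $u_n \to u$ strongly in $W^{1,\mathcal{H}}_V$. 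The main obstacle throughout is the vanishing case $v\equiv 0$ in Step~1, where \eqref{F}\textnormal{(iv)} must be deployed together with \eqref{H}\textnormal{(iv)} so that the exponent $p^-\sigma'$ lands in the range covered by the compact embeddings, and the low-$|u_n|$ estimate must be handled with care so that the whole argument closes up to the contradiction $p^-\leq o(1)$.
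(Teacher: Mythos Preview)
Your two-step scaffold, the derivation of the uniform bound $\int_{\mathbb{R}^d}\tilde F(x,u_n)\,\mathrm{d}x\leq C$, and Step~2 all match the paper (the paper invokes Theorem~\ref{Inj30} and the pointwise bound $|f(x,t)|\leq c|t|^{b^+-1}$ rather than Theorem~\ref{Inj3} with $\mathcal V=\mathcal B$, but the effect is the same). The genuine divergence is in Step~1. You argue by the dichotomy on the weak limit $v$ of $v_n=u_n/\|u_n\|$: if $v\not\equiv 0$ you run a Fatou argument with \eqref{F}\textnormal{(ii)}, and if $v\equiv 0$ you use the compact embedding to force $\|v_n\|_{L^{p^-\sigma'}}\to 0$. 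The paper never splits on $v$. After deriving $\limsup_n\int f(x,u_n)u_n/\|u_n\|^{p^-}\,\mathrm{d}x\geq 1$, it shows this integral is at most $\varepsilon$ for every $\varepsilon>0$ by a three-level decomposition $\{|u_n|<a_\varepsilon\}\cup\{a_\varepsilon\leq|u_n|<b_\varepsilon\}\cup\{|u_n|\geq b_\varepsilon\}$; the small and large levels are treated essentially as you do, but the middle level is the key device: from the $\tilde F$-bound one has $\int_{\{a\leq|u_n|<b\}}|u_n|^{p^-}\,\mathrm{d}x\leq C/c_a^b$ with $c_a^b=\inf_{a\leq|s|<b}\tilde F(x,s)/|s|^{p^-}$, hence $\int_{\{a\leq|u_n|<b\}}|v_n|^{p^-}\,\mathrm{d}x\to 0$ \emph{solely} because $\|u_n\|\to\infty$, and the large level uses the measure decay $|\{|u_n|\geq b\}|\leq C/\mathfrak F(b)\to 0$ as $b\to\infty$ rather than compactness. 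The payoff of the paper's route is that it avoids the Fatou lower-bound justification entirely and never needs the weak limit $v$. Two small slips in your write-up: $p^-\sigma'\in(p^-,p^-_\ast)$ follows from $\sigma>d/p^-$ alone, not from \eqref{H}\textnormal{(iv)}; and on $\{|u_n|<r_0\}$ you need more than \eqref{F}\textnormal{(iii)} (which only controls $f$ near zero)---the range $[\delta,r_0]$ requires \eqref{F}\textnormal{(i)}, and ``continuous'' should read ``compact'' so that $\int|v_n|^{p^-}\,\mathrm{d}x\to 0$ and the contradiction $p^-\leq o(1)$ actually closes.
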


\begin{proof}
	Let $\left\{u_n\right\}_{n \in \mathbb{N}} \subseteq \WV$ be a sequence such that $\left(\mathrm{C}_1\right)$ and $\left(\mathrm{C}_2\right)$ from Definition \ref{dcrmi} hold. We divide the proof into two steps.\\
	\textbf{Step 1.} We prove that $\curly{u_n}_{n \in \N}$ is bounded in $\WV$.

	First, from $\left(\mathrm{C}_1\right)$ we have that there exists a constant $M>0$ such that for all $n \in \mathbb{N}$ one has $\left|J_\lambda\left(u_n\right)\right| \leq M$, so
	\begin{align*}
		\left|\int_{\RN}\left(\h\L(x, |\nabla u_n| \r)+V(x)\h\L(x, |u_n|\r)\right)\,\mathrm{d}x-\lambda \int_{\RN} F\left(x, u_n\right)\,\mathrm{d}x\right| \leq M,
	\end{align*}
	which implies that
	\begin{align}\label{cr0}
		\rho(u_n)-\lambda \int_{\RN} F\left(x, u_n\right)\,\mathrm{d}x \leq M\quad\text{for all }n \in \mathbb{N}.
	\end{align}
	Besides, from $\left(\mathrm{C}_2\right)$, there exists $\left\{\varepsilon_n\right\}_{n \in \mathbb{N}}$ with $\varepsilon_n \rightarrow 0^{+}$such that
	\begin{align}\label{eq0001}
		\left|\left\langle J_\lambda^{\prime}\left(u_n\right), v\right\rangle\right| \leq \frac{\varepsilon_n\|v\|_{\WV}}{1+\left\|u_n\right\|_{\WV}} \quad \text {for all } n \in \mathbb{N} \text { and for all } v \in \WV.
	\end{align}
	Choosing $v=u_n$, one has
	\begin{align*}
		\left|\int_{\RN}\left(a(x,|\nabla u_n|)|\nabla u_n|^2+V(x)a(x,|u_n|)|u_n|^2\right)\,\mathrm{d}x-\lambda \int_{\RN} f\left(x, u_n\right) u_n\,\mathrm{d}x\right|\leq \varepsilon_n,
	\end{align*}
	which, multiplied by $\frac{-1}{q+}$, leads to
	\begin{align*}
		-\frac{1}{q+}\int_{\RN}\left(a(x,|\nabla u_n|)|\nabla u_n|^2+V(x)a(x,|u_n|)|u_n|^2\right)\,\mathrm{d}x+\frac{\lambda}{q+} \int_{\RN} f\left(x, u_n\right) u_n\,\mathrm{d}x \leq c_1,
	\end{align*}
	for some $c_1>0$ and for all $n \in \mathbb{N}$. Invoking \eqref{l22}, we conclude that
	\begin{align}\label{cr1}
		-\rho(u_n)+\frac{\lambda}{q+}\int_{\RN} f\left(x, u_n\right)u_n\,\mathrm{d}x \leq c_1\quad\text{for all }n \in \mathbb{N}.
	\end{align}
	Adding \eqref{cr0} and \eqref{cr1} we obtain
	\begin{align}\label{eq0002}
		C& \geq \int_{\RN}\left[ \frac{1}{q^+}f(x,u_n)u_n-F(x,u_n)\right]\,\mathrm{d}x=  \int_{\RN}\tilde{F}(x,u_n)\,\mathrm{d}x
	\end{align}
	for all $n \in \N$ with some constant $C > 0$.

	Arguing by contradiction, we assume that $\Vert u_n\Vert_{\WV}\to+\infty$. Then $\Vert u_n\Vert_{\WV} \geq 1$ for $n$ large enough. Let $v_n=\dfrac{u_n}{\Vert u_n\Vert_{\WV}}\in \WV $, so $\Vert v_n\Vert_{\WV}=1$ and, up to subsequence, we can assume that
	\begin{align*}
		v_n\rightharpoonup v\quad \text{in } \WV
		\quad\text{and}\quad
		v_n(x)\rightarrow v(x)\quad \text{a.e.\,in } \RN.
	\end{align*}
	Note that, exploiting Propositions \ref{nfct} and \ref{RELHV}, we find that, for $n$ large enough,
	\begin{align*}%\label{eq0003}
		\langle J_{\l}^{'}(u_n),u_n\rangle
		& = \scal{\rho(u_n),u_n} -\l\int_{\RN}f(x,u_n)u_n\,\mathrm{d}x\\
		& = \int_{\RN}a(x,\vert \nabla u_n\vert)|\nabla u_n|^2\,\mathrm{d}x +\int_{\RN}V(x)a(x, |u_n|) | u_n|^2\,\mathrm{d}x   -\l\int_{\RN}f(x,u_n)u_n\,\mathrm{d}x\\
		& \geq p^-\rho(u_n) -\l\int_{\RN}f(x,u_n)u_n\,\mathrm{d}x\\
		& \geq \Vert u_n\Vert_{\WV}^{p^-}-\l\int_{\RN}f(x,u_n)u_n\,\mathrm{d}x,
	\end{align*}
	since $\Vert u_n\Vert_{\WV} \geq 1$. Thus
	\begin{align}\label{eq0004}
		\frac{\langle J_{\l}^{'}(u_n),u_n\rangle}{\Vert u_n\Vert_{\WV}^{p^-}}\geq 1-\int_{\RN}\frac{f(x,u_n)}{\Vert u_n\Vert_{\WV}^{p^-}}u_n\,\mathrm{d}x.
	\end{align}
	From $(\ref{eq0001})$ and $(\ref{eq0004})$ it follows that
	\begin{align}\label{eq0005}
		\limsup\limits_{n\rightarrow+\infty}\int_{\RN}\frac{f(x,u_n)}{\Vert u_n\Vert_{\WV}^{p^-}}u_n\,\mathrm{d}x\geq 1.
	\end{align}
	For $r\geq 0$ we set
	\begin{align*}
		\mathfrak{F}(r):=\inf\left\lbrace \tilde{F}(x,s)\colon  x\in\RN \text{ and } s\in \mathbb{R} \text{ with }  s \geq r\right\rbrace.
	\end{align*}
	By \eqref{F}(ii)-(iv), we have
	\begin{align}\label{fr0}
		\mathfrak{F}(r)>0\quad \text{for all } r \text{ large}
		\quad\text{and}\quad
		\mathfrak{F}(r)\rightarrow +\infty\quad \text{as }  r\rightarrow+\infty.
	\end{align}
	For $0\leq a<b\leq +\infty$ let
	\begin{align*}
		A_n(a,b)&:=\left\lbrace x\in \RN\colon  a\leq \vert u_n(x)\vert <b\right\rbrace,\\
		c_a^b&:=\inf\left\lbrace \frac{\tilde{F}(x,s)}{\vert s\vert^{p^-}}\colon  x\in\RN \text{ and } s\in \mathbb{R}\setminus\{0\} \text{ with } a\leq \vert s\vert<b\right\rbrace.
	\end{align*}
	Note that
	\begin{align*}%\label{eq0006}
		\tilde{F}(x,u_n)\geq c_a^b\vert u_n\vert^{p^-}\quad \text{for all } x\in A_n(a,b).
	\end{align*}
	It follows from $(\ref{eq0002})$ that
	\begin{equation}\label{eq0007}
		\begin{aligned}
			C & \geq \int_{\RN}\tilde{F}(x,u_n)\,\mathrm{d}x\\
			& =\int_{A_n(0,a)}\tilde{F}(x,u_n)\,\mathrm{d}x+\int_{A_n(a,b)}\tilde{F}(x,u_n)\,\mathrm{d}x+\int_{A_n(b,+\infty)}\tilde{F}(x,u_n)\,\mathrm{d}x\\
			& \geq\int_{A_n(0,a)}\tilde{F}(x,u_n)\,\mathrm{d}x+c_a^b\int_{A_n(a,b)}\vert u_n\vert^{p^-}\,\mathrm{d}x+\mathfrak{F}(b)\vert A_n(b,+\infty)\vert
		\end{aligned}
	\end{equation}
	for $b$ large enough. Using Theorem \ref{Inj30}, we get $\gamma_r>0$ such that $\Vert v_n\Vert^r_{L^r(\RN)}\leq \gamma_r\Vert v_n\Vert^r_{\WV}=\gamma_3$ with $p^-\leq r<p^-_*$. Let $0<\varepsilon<\frac{1}{3}$. By assumption \eqref{F}(iii), there exists $a_\varepsilon>0$ such that
	\begin{align}\label{eq0008}
		\vert f(x,s)\vert \leq \frac{\varepsilon}{3\gamma_{p^-}}\vert s\vert^{p^--1}\quad\text{for all } \vert s\vert \leq a_\varepsilon.
	\end{align}
	From $(\ref{eq0008})$ and Theorem \ref{Inj30}, we obtain
	\begin{equation}\label{eq0009}
		\begin{aligned}
			\int_{A_n(0,a_\varepsilon)}\frac{f(x,u_n)}{\Vert u_n\Vert^{p^-}}u_n\,\mathrm{d}x
			& \leq \frac{\varepsilon}{3\gamma_{p^-}} \int_{A_n(0,a_\varepsilon)}\frac{\vert u_n\vert^{p^-}}{\Vert u_n\Vert^{p^-}}\,\mathrm{d}x\\
			& \leq \frac{\varepsilon}{3\gamma_{p^-}} \int_{A_n(0,a_\varepsilon)}\vert v_n\vert^{p^-}\,\mathrm{d}x\\
			& \leq \frac{\varepsilon}{3\gamma_{p^-}} \gamma_{p^-} \Vert v_n\Vert^{p^-}\\
			& = \frac{\varepsilon}{3}\quad \text{for all } n\in \mathbb{N}.
		\end{aligned}
	\end{equation}
	Now, exploiting $(\ref{eq0007})$ and assumption \eqref{F}(iv), we see that
	\begin{align*}
		C\geq \int_{A_n(b,+\infty)}\tilde{F}(x,u_n)\,\mathrm{d}x\geq \mathfrak{F}(b)\vert A_n(b,+\infty)\vert.
	\end{align*}
	It follows, using \eqref{fr0}, that
	\begin{align}\label{eq00010}
		\vert A_n(b,+\infty)\vert\rightarrow 0\quad \text{as } b\rightarrow+\infty \text{ uniformly in } n.
	\end{align}
	Set $\displaystyle{\sigma'=\frac{\sigma}{\sigma-1}}$ where $\sigma$ is defined in \eqref{F}(iv). Since $\sigma>\frac{d}{p^-}$, one sees that $p^-\sigma^{'}\in(p^-,p^-_*)$.

	Let $\tau\in (p^-\sigma^{'},p^-_*)$. Using Theorem \ref{Inj30}, H\"older's inequality and $(\ref{eq00010})$, for $b$ large, we find
	\begin{equation}\label{eq00020}
		\begin{aligned}
			\left( \int_{A_n(b,+\infty)}\vert v_n\vert ^{p^-\sigma^{'}}\,\mathrm{d}x \right)^{\frac{1}{\sigma^{'}}}
			& \leq \vert A_n(b,+\infty)\vert ^{\frac{\tau-p^-\sigma^{'}} {\tau\sigma^{'}}}\left( \int_{A_n(b,+\infty)}\vert v_n\vert ^{p^-\sigma^{'}\frac{\tau}{p^-\sigma^{'}}}\,\mathrm{d}x\right)^{\frac{p^-}{\tau}} \\
			& \leq \vert A_n(b,+\infty)\vert ^{\frac{\tau-p^-\sigma^{'}}{\tau\sigma^{'}}}\left( \int_{A_n(b,+\infty)}\vert v_n\vert ^{\tau}\,\mathrm{d}x\right)^{\frac{p^-}{\tau}} \\
			& \leq \vert A_n(b,+\infty)\vert ^{\frac{\tau-p^-\sigma^{'}}{\tau\sigma^{'}}}\gamma_{\tau}\Vert v_n\Vert^{p^-}\\
			& = \vert A_n(b,+\infty)\vert ^{\frac{\tau-p^-\sigma^{'}}{\tau\sigma^{'}}}\gamma_{\tau}\\
			& \leq \frac{\varepsilon}{3\L(\tilde{c}C\r)^{\frac{1}{\sigma}}}\quad \text{uniformly in } n.
		\end{aligned}
	\end{equation}
	By \eqref{F}(iv), H\"older's inequality, $(\ref{eq0007})$ and $(\ref{eq00020})$, we can choose $b_\varepsilon\geq r_0$ large so that
	\begin{equation}\label{eq00011}
		\begin{aligned}
			\int_{A_n(b_\varepsilon,+\infty)}\L|\frac{f(x,u_n)}{\Vert u_n\Vert^{p^-}}u_n\r|\,\mathrm{d}x
			& \leq\int_{A_n(b_\varepsilon,+\infty)}\frac{|f(x,u_n)|}{\vert u_n\vert^{p^--1}}\vert v_n\vert ^{p^-}\,\mathrm{d}x\\
			& \leq \left( \int_{A_n(b_\varepsilon,+\infty)}\left\lvert\frac{f(x,u_n)}{\vert u_n\vert^{p^--1}}\right\rvert^{\sigma}\,\mathrm{d}x\right)^{\frac{1}{\sigma}} \left( \int_{A_n(b_\varepsilon,+\infty)}\vert v_n\vert ^{p^-\sigma^{'}}\,\mathrm{d}x\right)^{\frac{1}{\sigma^{'}}} \\
			& \leq \left( \tilde{c}\int_{A_n(b_\varepsilon,+\infty)}\tilde{F}(x,u_n)\,\mathrm{d}x\right)^{\frac{1}{\sigma}} \left( \int_{A_n(b_\varepsilon,+\infty)}\vert v_n\vert ^{p^-\sigma^{'}}\,\mathrm{d}x\right)^{\frac{1}{\sigma^{'}}} \\
			& \leq \frac{\varepsilon}{3}\quad \text{uniformly in } n.
		\end{aligned}
	\end{equation}
	Next, from $(\ref{eq0007})$, we have
	\begin{equation}\label{eq00014}
		\begin{aligned}
			\int_{A_n(a,b)}\vert v_n\vert^{p^-}\,\mathrm{d}x
			& =\frac{1}{\Vert u_n\Vert_{\WV}^{p^-}}\int_{A_n(a,b)}\vert u_n\vert^{p^-}\,\mathrm{d}x\\
			&\leq \frac{C}{c_a^b \Vert u_n\Vert_{\WV}^{p^-}}\to 0\quad \text{as }  n\to +\infty.
		\end{aligned}
	\end{equation}
	Since $\displaystyle{\frac{f(x,s)}{ \vert s\vert^{p^--1}}}$ is continuous on $a\leq \vert s\vert\leq b$,  there exists $c>0$ depending on $a$ and $b$ and independent from $n$, such that
	\begin{align}\label{eq00012}
		\vert f(x,u_n)\vert \leq c\vert u_n\vert^{p^--1}\quad \text{for all } x\in A_n(a,b).
	\end{align}
	Using $(\ref{eq00014})$ and $(\ref{eq00012})$, we can choose $n_0$ large enough such that
	\begin{equation}\label{eq00013}
		\begin{aligned}
			\int_{A_n(a_\varepsilon,b_\varepsilon)}\L|\frac{f(x,u_n)}{\Vert u_n\Vert_{\WV}^{p^-}}u_n \r|\,\mathrm{d}x
			& \leq\int_{A_n(a_\varepsilon,b_\varepsilon)}\frac{|f(x,u_n)|}{\vert u_n\vert^{p^--1}}\vert v_n\vert^{p^-}\,\mathrm{d}x\\
			& \leq c\int_{A_n(a_\varepsilon,b_\varepsilon)}\vert v_n\vert^{p^-}\,\mathrm{d}x\\
			& \leq c\frac{C}{c_{a_\varepsilon}^{b_\varepsilon} \Vert u_n\Vert_{\WV}^{p^-}}\\
			& \leq \frac{\varepsilon}{3}\quad \text{for all } n\geq n_0.
		\end{aligned}
	\end{equation}
	Combining \eqref{eq0009}, \eqref{eq00011} and \eqref{eq00013}, we find that
	\begin{align*}
		\int_{\RN}\frac{f(x,u_n)}{\Vert u_n\Vert_{\WV}^{p^-}}u_n\,\mathrm{d}x\leq \varepsilon\quad \text{for all } n\geq n_0,
	\end{align*}
	which contradicts to \eqref{eq0005}. Therefore, $\lbrace u_n\rbrace_{n\in\mathbb{N}}$ is bounded in $\WV$.\\
	\textbf{Step 2.} $u_n \to u $ in $\WV$ as $n\rightarrow +\infty$ up to a subsequence.

	Since $\left\{u_n\right\}_{n \in \mathbb{N}} \subset \WV$ is bounded by Step 1 and $\WV$ is a reflexive space, there exists a subsequence, not relabeled, that converges weakly in $\WV$ and strongly in $L^{b^+}(\RN)$ (see Theorem \ref{Inj30}), that is,
	\begin{align*}
		u_n \rightharpoonup u \quad \text {in } \WV
		\quad\text{and}\quad
		u_n \rightarrow u \quad \text {in } L^{b^+}(\RN).
	\end{align*}
	Note that, from \eqref{F}(i), we have $f(x,t)\leq c|t|^{b^+-1}$.
	Therefore, using this in \eqref{eq0001} with $v=u_n-u$ and passing to the limit as $n \rightarrow +\infty$, we obtain
	\begin{align*}
		\left\langle \rho'\left(u_n\right), u_n-u\right\rangle \rightarrow 0 \quad \text {as } n \rightarrow +\infty.
	\end{align*}
	Since $\rho'$ satisfies the $\left(S_{+}\right)$-property, see Theorem \ref{op2}(ii), we get the desired assertion of the theorem.
\end{proof}

\begin{proof}[Proof of Theorem \ref{thex}]
	First, we assume that the assumptions of Theorem \ref{thex} are satisfied. Before proving the main existence result, we introduce some notation. We fix an open ball centered at $ x_0 \in \RN$ with radius $ R $, which we denote by $ B(x_0, R) $, and we set $\omega_R$ as the Lebesgue measure of the ball $B(x_0, R)$ in $\mathbb{R}^d$, which is given by
	\begin{align*}
		\omega_R:= \L | B(x_0, R) \r|= \frac{\pi^{\frac{d}{2}}}{\Gamma(1+\frac{d}{2})} R^d.
	\end{align*}
	Next, let
	\begin{align}\label{thta}
		\delta:=\dfrac{p^-\min \left\{R^{p^{-}}, R^{q^{+}}\right\}}{\max \left\{1,\|\mu\|_{\infty}\right\} \omega_R \L ( V_\infty \min \left\{R^{p^{-}}, R^{q^{+}}\right\} + 2^{q^{+}+1-d}(2^d-1)\r)},
	\end{align}
	where $V_\infty$ will be defined later. Further, we define
	\begin{align}
		\alpha(r)&:=\frac{\bar{\gamma}_{\mathcal{B}} \max \left\{\left(q^{+} r\right)^{\frac{b^{+}}{p^{-}}},\left(q^{+} r\right)^{\frac{b^-}{q_{+}}}\right\} }{r},\label{ar}\\
		\beta(\eta)&:=\delta \frac{\int_{B\left(x_0, \frac{R}{2}\right)} F(x, \eta)\,\mathrm{d}x}{\max \left\{\eta^{p_{-}}, \eta^{q_{+}}\right\}},\label{be}
	\end{align}
	where $\bar{\gamma}_{\mathcal{B}}=\max \left\{\gamma_{\mathcal{B}}^{b^+}, \gamma_{\mathcal{B}}^{b^-}\right\}$ and $ b $ is given in  \eqref{F}(i).

	Now, let $\rho$ and $K$ be as given in Subsection \ref{Prty}. First we see that $\rho$ and $K$ fulfill all the required regularity properties in Theorem \ref{bonano}. Indeed, $\rho$ is coercive due to Proposition \ref{RELHV}(iv) and the functional $J_\lambda$ is unbounded from below because of \eqref{F}(ii). Now, fix $\l \in \Lambda$, which is nonempty because of $(\mathrm{H}_2)$, and consider $\tilde{u} \in \WV$ defined by
	\begin{align*}
		\tilde{ u}(x):=
		\begin{cases}
			0&  \text{if } x \in \RN \backslash B\L(x_0,R\r), \\
			u_R:=  \dfrac{2 \eta}{R}(R- |x-x_0|) & \text{if } x \in B\L(x_0,R\r)\backslash B\L(x_0,\frac{R}{2}\r)=:S,  \\
			\eta &  \text{ if }x \in  B\L(x_0,\frac{R}{2}\r).
		\end{cases}
	\end{align*}
	First, for $x\in S$, we have that
	\begin{equation}\label{ur1}
		\begin{aligned}
			|u_R|=\L|\dfrac{2 \eta}{R}(R- |x-x_0|)\r|
			& =\L|\dfrac{2 \eta}{R}\r| \L|(R- |x-x_0|)\r| \\
			& \leq \L|\dfrac{2 \eta}{R}\r| \L|\frac{R}{2}\r|=\eta.
		\end{aligned}
	\end{equation}
	Moreover, since $V\in C(\RN, \R)$, we have $V \in L^\infty(B\L(x_0,R\r))$ and so we set $V_\infty:= \|V\|_{L^\infty(B\L(x_0,R\r))}$.\\
	\textbf{Step 1.} $0<\rho(\tilde{u})<r$.

	To simplify, we rewrite the functional $\rho$ as
	\begin{align*}
		\rho(\tilde{u})=\rho_{\h}( \nabla \tilde{u})+\rho_{\h_V}(\tilde{u}),
	\end{align*}
	where
	\begin{align*}
		\rho_{\h}( \nabla \tilde{u})= \int_{\RN}^{} \h(x, |\nabla \tilde{u}|)\,\mathrm{d}x
		\quad\text{and}\quad
		\rho_{\h_V}(\tilde{u})= \int_{\RN}^{} \h_V(x, |\tilde{u}|)\,\mathrm{d}x.
	\end{align*}
	Using \eqref{l22} and proceeding as in \cite[Theorem 3.2, p. 743]{Amoroso-Bonanno-DAgui-Winkert-2024}, we easily show that
	\begin{equation}\label{i1}
		\begin{aligned}
			\rho_{\h}( \nabla \tilde{u})
			& =\int_{\RN}^{} \h(x, |\nabla \tilde{u}|)\,\mathrm{d}x \\&\leq \frac{1}{p^-} \int_{\RN}^{} h(x, |\nabla \tilde{u}|) |\nabla \tilde{u}|\,\mathrm{d}x \\
			&=\frac{1}{p^-} \int_{S}\left(\left(\frac{2 \eta}{R}\right)^{p(x,\frac{2 \eta}{R})}+\mu(x)\left(\frac{2 \eta}{R}\right)^{q(x,\frac{2 \eta}{R})}\right)\,\mathrm{d}x \\
			&\leq \frac{2^{q^{+}+1-d}(2^d-1)}{p^{-}}  \frac{\max \left\{1,\|\mu\|_{\infty}\right\}}{\min \left\{R^{p^{-}}, R^{q^{+}}\right\}} \max \left\{\eta^{p ^{-}}, \eta^{q^{+}}\right\} \omega_R.
		\end{aligned}
	\end{equation}
	Again, by \eqref{l22} and invoking \eqref{ur1}, it yields that
	\begin{equation}\label{i2}
		\begin{aligned}
			\rho_{\h_V}(\tilde{u})
			& =\int_{\RN}^{} \h_V(x, | \tilde{u}|)\,\mathrm{d}x \leq \frac{1}{p^-} \int_{\RN}^{}V(x) h(x, | \tilde{u}|) | \tilde{u}|\,\mathrm{d}x \\
			&=\frac{1}{p^-} \int_{S} V(x)\L( |u_R|^{p(x,|u_R|)} +\mu(x) |u_R|^{q(x,|u_R|)}   \r)\,\mathrm{d}x\\
			&\quad +\frac{1}{p^-} \int_{ B\left(x_0, \frac{R}{2}\right)}V(x)\left(\eta^{p(x,\eta )}+\mu(x) \eta^{q(x,\eta)}\right)\,\mathrm{d}x \\
			&\leq \frac{1}{p^-} \int_{S}V(x)\left( \eta^{p(x,\eta )}+\mu(x) \eta^{q(x,\eta)} \right)\,\mathrm{d}x\\
			&\quad +\frac{1}{p^-} \int_{ B\left(x_0, \frac{R}{2}\right)}V(x)\left( \eta^{p(x,\eta )}+\mu(x)\eta^{q(x,\eta)}\right)\,\mathrm{d}x\\
			& = \frac{1}{p^-}  \int_{ B\left(x_0, R\right)}V(x)\left( \eta^{p(x,\eta )}+\mu(x)\eta^{q(x,\eta)}\right)\,\mathrm{d}x \\
			&\leq \frac{V_\infty}{p^-}   \max \left\{1,\|\mu\|_{\infty}\right\}\max \left\{\eta^{p^{-}}, \eta^{q^{+}}\right\} \omega_R.
		\end{aligned}
	\end{equation}
	Therefore, adding \eqref{i1} and \eqref{i2} and taking \eqref{318} in mind, it follows  that
	\begin{equation}\label{rho}
		\begin{aligned}
			\rho(\tilde{u})
			&\leq \max \left\{\eta^{p^{-}}, \eta^{q^{+}}\right\}\L(\frac{1}{p^-} \max \left\{1,\|\mu\|_{\infty}\right\} \omega_R \L ( V_\infty + \frac{2^{q^{+}+1-d}(2^d-1)}{\min \left\{R^{p_{-}}, R^{q_{+}}\right\}}  \r) \r)\\
			&=\frac{1}{\delta}\max \left\{\eta^{p^{-}}, \eta^{q^{+}}\right\}<r.
		\end{aligned}
	\end{equation}
	\noindent\textbf{Step 2.} We need to verify the validity of condition \eqref{32}.

	From assumption $\left(\mathrm{H}_1\right)$ and \eqref{ur1}, we obtain
	\begin{align*}
		K(\tilde{u})=\int_{B\left(x_0, \frac{R}{2}\right)} F(x, \eta)\,\mathrm{d}x+\int_{S} F\left(x, \frac{2 \eta}{R}\left(R-\left|x-x_0\right|\right)\right)\,\mathrm{d}x \geq \int_{B\left(x_0, \frac{R}{2}\right)} F(x, \eta)\,\mathrm{d}x.
	\end{align*}
	Hence, in view of \eqref{rho}, we infer that
	\begin{align}\label{ml11}
		\frac{K(\tilde{u})}{\rho(\tilde{u})} \geq \delta \frac{\ds\int_{B\left(x_0, \frac{R}{2}\right)} F(x, \eta)\,\mathrm{d}x}{\max \left\{\eta^{p_{-}}, \eta^{q_{+}}\right\}}.
	\end{align}
	On the other hand, fix $u \in \WV$ such that $\rho(u) \leq r $. Then, invoking Proposition \ref{RELHV}, one has
	\begin{align}\label{rho1}
		q^+r\geq q^+\rho(u)> \rho (u) \geq \min \L\{ \|u\|_{\WV}^{p^-}, \|u\|_{\WV}^{q^+} \r\}.
	\end{align}
	Thus, by \eqref{F}(i), Proposition \ref{zoo} and \eqref{rho1}, we get
	\begin{align*}
		\sup _{\left.u \in \rho^{-1}(]-\infty, r]\right)} K(u)
		&\leq \sup _{\left.u \in \rho^{-1}(]-\infty, r]\right)}  \int_{\RN}\mathcal{B}\left(x,|u|\right)\,\mathrm{d}x \\
		& =\sup _{\left.u \in \rho^{-1}(]-\infty, r]\right)} \rho_{\mathcal{B}}(u) \leq \sup _{\left.u \in \rho^{-1}(]-\infty, r]\right)} \left(\max \left\{\|u\|_{L^{\mathcal{B}}(\RN)}^{b^{-}},\|u\|_{L^{\mathcal{B}}(\RN)}^{b^{+}}\right\}\right) \\
		& \leq \sup _{\left.u \in \rho^{-1}(]-\infty, r]\right)}  \left(\max \left\{\gamma_{\mathcal{B}}^{b^+}, \gamma_{\mathcal{B}}^{b^-}\right\} \max \left\{\|u\|_{\WV}^{b^{-}},\|u\|_{\WV}^{b^{+}}\right\}\right) \\
		& \leq \left(\bar{\gamma}_{\mathcal{B}} \max \left\{\left(q^{+}  r\right)^{\frac{b^{+}}{p^{-}}},\left(q^{+} r\right)^{\frac{b^-}{q_{+}}}\right\}\right).
	\end{align*}
	Then, taking $\left(\mathrm{H}_2\right)$ and \eqref{ml11} into account, we get
	\begin{align*}
		\frac{\ds \sup _{\left.u \in \rho^{-1}(]-\infty, r]\right)} K(u)}{r}
		&\leq \frac{\left(\bar{\gamma}_{\mathcal{B}} \max \left\{\left(q^{+} r\right)^{\frac{b^{+}}{p^{-}}},\left(q^{+} r\right)^{\frac{b^-}{q^{+}}}\right\}\right) }{r}=\alpha (r)< \beta (\eta)\\
		&= \delta \frac{\ds\int_{B\left(x_0, \frac{R}{2}\right)} F(x, \eta)\,\mathrm{d}x}{\max \left\{\eta^{p_{-}}, \eta^{q_{+}}\right\}} \leq \frac{K(\tilde{u})}{\rho(\tilde{u})}.
	\end{align*}
	This proves Step 2.

	From Steps 1 and 2 and Lemma \ref{crmi} we see that all conditions in Theorem \ref{bonano} are satisfied and so we conclude that problem \eqref{prb} has at least two nontrivial weak solutions $u_{\lambda, 1}, u_{\lambda, 2} \in \WV$ such that $J_\lambda\left(u_{\lambda, 1}\right)<0<J_\lambda\left(u_{\lambda, 2}\right)$. This finishes the proof.
\end{proof}

%********************************************************************
\section{Concluding remarks, perspectives, and open problems}
%********************************************************************

\begin{enumerate}
	\item[\textnormal{(i)}]
		Building on the key results established in this paper (Theorems \ref{Inj1}, \ref{Inj2}, \ref{Inj3}, and \ref{Inj30}) together with those obtained by Bahrouni--Bahrouni--Missaoui--R\u{a}dulescu \cite{Bahrouni-Bahrouni-Missaoui-Radulescu-2024}, we believe that a comprehensive investigation of equation \eqref{prb} is within reach, including aspects such as existence, uniqueness, multiplicity, and regularity.
	\item[\textnormal{(ii)}]
		In a forthcoming paper, we will investigate the multiplicity and uniqueness of solutions for the non-variational case of problem \eqref{prb}. In particular, we will study problem \eqref{prb} involving the gradient term $\nabla u$ (or convection term) in the nonlinear data.
	\item[\textnormal{(iii)}]
		Throughout this paper, we focused on cases where the exponent depends on the gradient of the solution. A natural extension of this work would be to explore what happens if, instead of the gradient, the exponent depends directly on the solution itself. Specifically, we consider the following equation:
		\begin{equation} \label{u}
			\begin{aligned}
				-\operatorname{div}&\left(|\nabla u|^{p(x,| u|)-2} \nabla u+ \mu(x)| \nabla u|^{q(x,| u|)-2}  \nabla u\right)\\&+ V(x)\left(|u|^{p(x,| u|)-2}  u+ \mu(x)|  u|^{q(x,| u|)-2}  u\right)=\l f(x,u), \ \ x \in \mathbb{R}^d.
			\end{aligned}
		\end{equation}
		As described in detail in \cite{Bahrouni-Bahrouni-Missaoui-Radulescu-2024}, the key difference between the equation above and equation \eqref{prb} lies in the absence of appropriate Musielak-Orlicz Sobolev spaces. As a result, the proof of Theorem \ref{thex} cannot be directly extended to equation \eqref{u}. The original approach for addressing problem \eqref{u} involves constructing an auxiliary problem and then demonstrating the existence of a solution to equation \eqref{u} by passing to the limit. We believe that for the auxiliary problem introduced in \cite{Bahrouni-Bahrouni-Missaoui-Radulescu-2024}, it is possible to establish the existence of two solutions using methods analogous to those in the proof of Theorem \ref{thex}. However, when passing to the limit, these two solutions may coincide. It is therefore interesting to investigate whether Theorem \ref{thex} still applies to equation \eqref{u}.
	\item[\textnormal{(iv)}]
		Note that the proofs of Theorems \ref{Inj2}, \ref{Inj3}, and \ref{Inj30} rely heavily on Theorem \ref{Inj1}. However, the proof of Theorem \ref{Inj1} is specific to the particular N-function $\mathcal{H}$. Therefore, it is important to check whether the same results can be obtained for a more general N-function.
	\item[\textnormal{(v)}]
		We would like to highlight that the result obtained in Theorem \ref{Inj30} is not optimal. Specifically, we established the compact embedding of $W^{1,\mathcal{H}}_V(\mathbb{R}^d)$ into a Lebesgue space with a variable exponent. To achieve the optimal result, and following the approach used by Berger--Schechter \cite{Berger-Schechter-1972}, a new compact embedding theorem for $W^{1,\mathcal{H}}(\Omega)$ is required, where $\Omega$ is a specific unbounded domain. Proving this result involves considerable effort, so we consider it an open problem.
\end{enumerate}

%********************************************************************

\end{document}